\documentclass[a4paper,11pt]{amsart}

\usepackage[latin1]{inputenc} 
\usepackage{graphicx}
\usepackage[pdftex,bookmarks]{hyperref}
\usepackage{amsmath}    
\usepackage{amssymb}
\usepackage{amsmath,amsthm}
\usepackage{textcomp}
\usepackage{color}
\usepackage[all]{xy}
\usepackage{geometry}
\usepackage{tikz-cd}

\newtheorem{teo}{Theorem}[section]
\newtheorem{defi}{Definition}[section]
\newtheorem{lemma}{Lemma}[section]

\newtheorem{cor}{Corollary}[section]
\newtheorem{prop}{Proposition}[section]

\newtheorem{rem} {Remark}[section]

\DeclareMathOperator{\im}{im}

\DeclareMathOperator{\dvol}{dvol}

\DeclareMathOperator{\vol}{vol}

\DeclareMathOperator{\End}{End}

\DeclareMathOperator{\tr}{tr}

\newcommand{\beq}{\begin{equation}}
	\newcommand{\eeq}{\end{equation}}
\newcommand{\beqn}{\begin{equation*}}
	\newcommand{\eeqn}{\end{equation*}}

\newcommand{\R}{\mathbb{R}}

\newcommand{\pp}{\partial\bar{\partial}}

\newcommand{\w}{\omega}

\newcommand{\mF}{\mathcal{F}}

\newcommand{\mE}{\mathcal{E}}
\newcommand{\mL}{\mathcal{L}}

\newcommand{\mQ}{\mathcal{Q}}
\newcommand{\mG}{\mathcal{G}}
\newcommand{\mH}{\mathcal{H}}

\DeclareMathOperator \rank{rank}

\DeclareMathOperator{\rd}{rd}

\DeclareMathOperator{\mov}{Mov}

\title[Simply connectedness of K\"ahler and Riemannian manifolds via spectral estimates]{Simply connectedness of K\"ahler and Riemannian manifolds via spectral estimates\\ (with an appendix by Shiyu Zhang)}
\author[F.Bei]{Francesco Bei} 
\address{Francesco Bei \\ Dipartimento di Matematica Guido Castelnuovo \\ Sapienza Universit\`a di Roma \\ Piazzale Aldo Moro 5 \\ I-00185 Roma.}
\email{bei@mat.uniroma1.it}
\address{Shiyu Zhang \\ School of Mathematical Sciences, University of Science and Technology of China, Hefei, 230026, P.R. China}
\email{{\tt shiyu123@mail.ustc.edu.cn}}

\begin{document}

\maketitle

\begin{abstract}
Let $(M,h)$ be a compact K\"ahler manifold. Under a rather weak spectral positivity assumption we prove that $M$ is rationally connected and thus simply connected, projective with $h^{p,0}(M)=\{0\}$ for each $p>0$. Then, in the second part of this paper, we focus on Riemannian manifolds and we provide an appropriate spectral positivity assumption which guarantees that a compact and oriented even dimensional Riemannian manifold $(M,g)$ is a simply connected real homology sphere. Finally, in the appendix, a characterization of the rational dimension of compact K\"ahler manifolds in terms of the positivity of the minimal slope of the tangent bundle is given.
\end{abstract}

\vspace{0.5 cm}

\noindent\textbf{Keywords}: K\"ahler manifolds, Riemannian manifolds, spectral positivity, simply connectedness, rational connectedness, Ricci curvature, curvature operator.\\

\noindent\textbf{Mathematics subject classification}:  53C55, 53C20, 32Q15, 32Q55, 58J50.

\section*{Introduction}

The interaction between curvature and topology is a deep and active research field that drew the attention of several mathematicians in the last decades. A landmark result in this research area is certainly the famous Bonnet-Myers theorem according to which a complete manifold $(M,g)$ with   positive Ricci curvature $\mathrm{Ric}_g$ is compact (with an upper estimates on the diameter) and has finite fundamental group. This theorem stimulated several further results where the uniform positivity of the Ricci curvature is replaced with weaker assumptions that allow the Ricci curvature to be somehow negative. Just to have a glimpse of the literature with no goal of completeness we can recall here \cite{AntoXu}, \cite{Aubry},\cite{CarRose}, \cite{S-Gallot}, \cite{Peter1}, \cite{Sprouse}, \cite{Wu}. In particular one way to relax the assumptions of the Bonnet-Myers theorem is to replace the uniform positivity of the Ricci curvature with  positivity in the spectral sense: more precisely it is required that the spectrum of a Schr\"odinger-type operator $\gamma\Delta+r$ with $\gamma$ a positive constant, $\Delta$ the Laplace-Beltrami operator of $(M,g)$ and $r:M\rightarrow \mathbb{R}$ the function that assigns to each $x\in M$ the lowest eigenvalue of $\mathrm{Ric}_g$ in $x$, is entirely positive. This is done for instance by Carron and Rose in \cite[Th. A]{CarRose} where the authors proved that $(M,g)$ is compact with an upper estimate on the diameter and with finite fundamental group provided  $\gamma\Delta+r$ has entirely positive spectrum, with $\gamma\in [0,1/(m-2)]$ and $m$ the dimension of $M$. This result has been recently improved by Antonelli and Xu in \cite[Th. 1]{AntoXu} where the authors obtained again that $(M,g)$ is compact with an upper estimate on the diameter and with finite fundamental group under the weaker and sharp assumption that $\gamma\Delta+r$ has entirely positive spectrum, with $\gamma\in [0,(m-1)/(m-2)]$. So far, we have only considered the interplay between  Ricci curvature and  topology in the realm of Riemannian manifolds. In the K\"ahler setting a beautiful result due to Kobayashi shows that the positivity of the Ricci curvature has even a stronger impact on the topology of the underlying manifold. More precisely in \cite[Th. A]{Koba} Kobayashi proved that a compact K\"ahler manifold with positive Ricci curvature is necessarily simply connected. Kobayashi's theorem inspired subsequent works where the simply connectedness of a compact K\"ahler manifold is attained through different notions of positive curvature see e.g. \cite[Th. 1.2]{Ni}, \cite[Th. 1.3]{Yang} and the references therein.\\
In this paper we aim to give an answer to the following question: given a compact K\"ahler manifold $(M,h)$, is it possible to achieve the same conclusion of Kobayashi's theorem by simply requiring positivity in the spectral sense of a suitable Schr\"odinger-type operator? It turns out that the above question has a positive answer under a very weak spectral positivity assumption. More precisely, by exploiting the notion and the geometric consequences of the rational connectedness, we have the following
\begin{teo}
\label{rationallyint}
Let $(M,J,h)$ be a complete K\"ahler manifold of real dimension $2m$.
\begin{enumerate}
\item If $m>1$ and the operator $$\gamma\Delta+r_1:L^2(M,h)\rightarrow L^2(M,h)$$ has entirely positive spectrum for some $\gamma\in [0,\frac{4}{2m-1})$ then $M$ is compact and rationally connected. 
\item If $M$ is compact and the operator $$\gamma\Delta+r_1:L^2(M,h)\rightarrow L^2(M,h)$$ has entirely positive spectrum for some $\gamma\in [0,+\infty)$ then $M$ is rationally connected.
\end{enumerate}
In particular $M$ is simply connected, projective and $h^{p,0}(M)=\{0\}$ for each $p=1,...,m$.
\end{teo}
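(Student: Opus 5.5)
Here $r_1(x)$ denotes the lowest eigenvalue of $\mathrm{Ric}_h$ at $x$. The plan is to reduce both parts to the characterization of rational connectedness through positivity of the minimal slope of $TM$ (the appendix) or, equivalently, through vanishing of twisted holomorphic forms. The criterion of Campana--Demailly--Peternell is convenient here: a compact K\"ahler manifold $M$ is rationally connected iff $H^0(M,(\Omega^1_M)^{\otimes p})=0$ for all $p\geq 1$. Rational connectedness implies simple connectedness, projectivity and $h^{p,0}=0$; these are classical facts (Campana, Koll\'ar--Miyaoka--Mori).

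For part (1) I would first obtain compactness from the Antonelli--Xu theorem \cite[Th. 1]{AntoXu}. The underlying Riemannian manifold has dimension $n=2m$, and $\gamma<\frac{4}{2m-1}\le\frac{2m-1}{2m-2}$ for $m>1$; the inequality $4(2m-2)\le(2m-1)^2$ always holds. So $M$ is compact, and part (1) follows from part (2). Thus the core is part (2) for a compact $M$.

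For part (2), take a nonzero holomorphic section $\sigma$ of $(\Omega^1_M)^{\otimes p}$, or more generally of $\Lambda$-type tensor bundles, and derive a contradiction. The Bochner--Kodaira formula for holomorphic covariant tensors gives
\[
\Delta_{\bar\partial}\tfrac12|\sigma|^2 \;=\; -|\nabla\sigma|^2 - \langle \mathcal{R}\sigma,\sigma\rangle ,
\]
where the curvature term for a $p$-fold cotangent tensor is a sum of $p$ Ricci actions, one on each slot. Hence $\langle \mathcal{R}\sigma,\sigma\rangle\ge p\,r_1|\sigma|^2$; note the sign, since Ricci acts negatively on cotangent tensors. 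Combining this with the refined Kato inequality $|\nabla\sigma|^2\ge(1+\kappa)|d|\sigma||^2$ valid for holomorphic sections, I get for $f=|\sigma|$ the inequality $f\Delta f\le -\kappa|df|^2 - p r_1 f^2$ (with $\Delta\ge0$) in the distributional sense.

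Next I would test the positivity of $\gamma\Delta+pr_1$ (which follows from that of $\gamma\Delta+r_1$, after rescaling, if $p\ge1$; more precisely $\gamma\Delta+r_1>0$ gives $\frac{\gamma}{p}\Delta+r_1>0$ only in a weaker form, so I would rather use $\gamma\Delta+r_1>0$ directly) against $\phi=f^{\alpha}$, choosing $\alpha$ so that $\int\gamma|d f^\alpha|^2+r_1f^{2\alpha}>0$ contradicts the integrated inequality. Here
\[
\int |df^\alpha|^2=\alpha^2\int f^{2\alpha-2}|df|^2,
\]
and integrating $f^{2\alpha-1}\Delta f$ by parts gives $\int r_1 f^{2\alpha}\le -\frac{(2\alpha-1)+\kappa}{p}\int f^{2\alpha-2}|df|^2$. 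Since compactness is assumed, any $\gamma$ can be absorbed by taking $\alpha$ small: then $\gamma\alpha^2\le (2\alpha-1+\kappa)/p$ fails for $\alpha$ near 0 unless $\kappa\ge1$. This is why I would instead exploit the freedom of compactness differently. I would use the positivity of the spectrum to obtain a positive first eigenfunction $u$ with $\gamma\Delta u+r_1u=\lambda u$, $\lambda>0$, and perform a conformal or Hermitian change of metric on $TM$ (twisting by $u^{-\beta}$) so that the twisted Ricci form has positive minimal eigenvalue in the Hermitian-Einstein sense. This would give positive minimal slope, hence rational connectedness via the appendix.

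The main obstacle is precisely this step: converting spectral positivity with arbitrary $\gamma$ into a pointwise-positive Hermitian metric on $TM$ (or $-K$-type positivity on all quotients). I would try $\tilde h=u^{-2/\gamma}$-twisted metrics, possibly with $\gamma$ replaced by $p\gamma$ for tensor powers, and check that the mean curvature lower bound becomes $\ge\lambda/\gamma>0$ up to a gradient term absorbed by Cauchy--Schwarz.
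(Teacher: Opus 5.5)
Your reduction of part (1) to part (2) is fine: $\frac{4}{2m-1}\le\frac{2m-1}{2m-2}$ because $(2m-3)^2\ge0$, so Antonelli--Xu gives compactness (the paper quotes \cite[Cor. 1]{AntoXu} directly). The final consequences of rational connectedness are also classical. But part (2), which carries the whole theorem, has two genuine gaps.

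First, the criterion you aim for is not a theorem. ``$M$ is rationally connected iff $H^0(M,(\Omega^1_M)^{\otimes p})=0$ for all $p\ge1$'' is Mumford's conjecture, which is still open in general. Campana--Demailly--Peternell prove only the twisted criterion: no invertible subsheaf of $\Omega^p_M$ (or of $(\Omega^1_M)^{\otimes p}$) is pseudo-effective, i.e.\ $H^0(M,\Omega^p_M\otimes L^{-1})=0$ for every pseudo-effective line bundle $L$. This twisted vanishing is exactly what the paper proves in Lemma \ref{prop-vanishing} and Prop.\ \ref{coro-rationaldimension}, and it then concludes via the MRC fibration, Graber--Harris--Starr and Ou.

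Second, you never establish any vanishing or positivity. You drop the Bochner argument and end with a twist that you call the main obstacle and do not check.

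The missing idea is that the spectral hypothesis is needed only through $\int_M r_1\dvol_h=\langle(\gamma\Delta+r_1)1,1\rangle>0$ (Lemma \ref{totalp-spectralp}). Total positivity suffices because, for a holomorphic section $s$ of a Hermitian holomorphic bundle, there is a gradient-free inequality $i\partial\bar\partial\log|s|^2\ge-\langle i\Theta s,s\rangle/|s|^2$ in the sense of currents. A singular psh weight on $L$ only adds a positive term to it. Take the bundle $\Lambda^{p,0}M\otimes L^{-1}$, wedge with $\omega^{m-1}$ and integrate; the left side integrates to zero since $\partial\bar\partial\omega^{m-1}=0$. This gives $\int_M\mathfrak{r}_p\dvol_h\le0$, contradicting $\int_M\mathfrak{r}_p\dvol_h\ge p\int_M r_1\dvol_h>0$.

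This inequality is the refined Kato inequality with the sharp constant $|\nabla s|^2\ge 2|d|s||^2$ for holomorphic sections, the one used in Th.\ \ref{finiteness}. That is precisely your case $\kappa=1$, which you dismissed. With $\kappa=1$ your own test-function computation gives $\lambda\int f^{2\alpha}\le\alpha(\gamma\alpha-\frac{2}{p})\int f^{2\alpha-2}|df|^2$. This is a contradiction for $0<\alpha<\frac{2}{p\gamma}$, for every $\gamma$, although it yields only untwisted vanishing; the $\log$ form handles the pseudo-effective twist for free.

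Your twist can also be repaired, but not with $u^{-2/\gamma}$. Take $u>0$ with $\gamma\Delta u+r_1u=\lambda u$ (after smoothing $r_1$, which is only Lipschitz). Rescale the metric of $T^{1,0}M$ by $u^{c\gamma}$, with $c>0$ a normalization constant; the exponent must be proportional to $\gamma$ so that $r_1$ cancels. Up to a positive factor, the mean curvature becomes $\mathrm{ric}^{1,0}+\gamma(\Delta\log u)\,\mathrm{Id}$. Since $\gamma\Delta\log u=\lambda-r_1+\gamma|d\log u|^2$, its smallest eigenvalue is $\ge\lambda$ pointwise, and the gradient term has the favorable sign, so no Cauchy--Schwarz absorption is needed.

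Either way, the conclusion must go through the twisted criterion (or Prop.\ \ref{coro-rationaldimension}), not the untwisted one.
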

 Roughly speaking $r_1$ is the function that assigns to each $x\in M$ the smallest eigenvalue in $x$ of the Ricci curvature of $(M,h)$. We refer to \eqref{function} for the precise definition of $r_1$. The crucial ingredient in proving the above result is to convert spectral positivity into the total positivity of \(r_1\), which implies a vanishing result and yields the rational connectedness. In the first section we also show that the non-negativity of $\gamma\Delta+r_1$ in the spectral sense with $\gamma\in [0,2)$ yields interesting geometric consequences. Namely:
\begin{teo}
\label{finitenessint}
Let $(M,J,h)$ be a compact K\"ahler manifold of real dimension $2m$ such that for some $\gamma\in [0,2)$ the operator $$\gamma\Delta+\frak{r}_p:L^2(M,h)\rightarrow L^2(M,h)$$ has entirely non-negative spectrum for each $p=1,...,m$. Then $M$ satisfies at least one of the following two properties:
\begin{enumerate}
\item $\pi_1(M)$ is finite,
\item  $\chi(M,\mathcal{O}_M)= 0$.
\end{enumerate}
\end{teo}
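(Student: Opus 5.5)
The plan is to use Atiyah's $L^2$-index theorem on the universal cover. Let $\pi:\tilde M\rightarrow M$ be the universal covering with the pulled-back K\"ahler metric $\tilde h$, and let $\Gamma=\pi_1(M)$ act by deck transformations. Since $\bar\partial+\bar\partial^*$ on $(0,\bullet)$-forms is a $\Gamma$-invariant elliptic operator, Atiyah's theorem gives
\[
\chi(M,\mathcal{O}_M)=\sum_{q=0}^m(-1)^q\dim_\Gamma \mathcal{H}^{0,q}_{(2)}(\tilde M),
\]
where $\mathcal{H}^{0,q}_{(2)}$ is the space of $L^2$ harmonic $(0,q)$-forms. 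Conjugation identifies these spaces with $L^2$ harmonic $(q,0)$-forms. On a complete K\"ahler manifold such forms are exactly the $L^2$ holomorphic $q$-forms. Assuming $\pi_1(M)$ is infinite, I will show that all these spaces vanish, which forces $\chi(M,\mathcal{O}_M)=0$.

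\emph{Step 1 (lifting the spectral hypothesis).} On the compact manifold $M$, non-negativity of the spectrum of $\gamma\Delta+\frak{r}_p$ is equivalent to the first eigenfunction $\phi_p>0$ satisfying $\gamma\Delta\phi_p+\frak{r}_p\phi_p=\lambda_p\phi_p$ with $\lambda_p\geq 0$. The lift $\tilde\phi_p=\phi_p\circ\pi$ is a positive, bounded function, bounded away from zero, on $\tilde M$. It satisfies the same inequality, with $\tilde{\frak r}_p=\frak r_p\circ\pi$.

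\emph{Step 2 (Bochner--Kato argument).} Let $\omega$ be an $L^2$ holomorphic $p$-form on $\tilde M$ with $p\geq 1$. The Bochner formula for holomorphic $p$-forms reads $\frac12\Delta|\omega|^2=-|\nabla\omega|^2-\langle \mathcal{R}_p\omega,\omega\rangle$, with sign conventions such that $\Delta\geq 0$. Here $\mathcal{R}_p$ is the curvature term, bounded below pointwise by $\frak r_p$; I expect this lower bound is the defining property of $\frak r_p$ in \eqref{function}. Combining this with the refined Kato inequality for holomorphic forms, $|\nabla|\omega||^2\leq \tfrac12|\nabla\omega|^2$ away from the zeros, yields a differential inequality
\[
\tfrac12\Delta f+\frak r_p f\leq 0 \quad\text{for } f=|\omega|,
\]
up to the precise power or normalisation, which I would fix by working with $f=|\omega|^{\alpha}$. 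One then tests this against $\tilde\phi_p$: write $f=\tilde\phi_p\, v$, multiply by $v\,\tilde\phi_p\,\chi_R^2$ with cut-off functions $\chi_R$, and integrate by parts. The strict inequality $\gamma<2$ leaves a positive multiple of $\int\chi_R^2\tilde\phi_p^2|\nabla v|^2$, together with a multiple of $\int \chi_R^2|\nabla\omega|^2$, on the good side. The boundary terms are $O(R^{-2}\int|\omega|^2)$, which tend to $0$ because $\omega\in L^2$ and $\tilde\phi_p$ is bounded above and below. The conclusion is that $\nabla\omega=0$, so $|\omega|$ is constant.

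\emph{Step 3 (infinite volume).} If $\pi_1(M)$ is infinite, then $\vol(\tilde M)=\infty$. A parallel $L^2$ form has constant norm, so it must vanish. For $p=0$, an $L^2$ holomorphic function is harmonic and $L^2$ on a complete manifold, hence constant, hence zero. So all $\mathcal{H}^{q,0}_{(2)}(\tilde M)$ vanish for $q=0,\dots,m$, and Atiyah's formula gives $\chi(M,\mathcal{O}_M)=0$. Otherwise $\pi_1(M)$ is finite, which is the dichotomy claimed.

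The main obstacle is Step 2: making the cut-off/ground-state integration by parts work with the exact constant. One has to check that the Kato constant for holomorphic $p$-forms, together with the choice of power $\alpha$, produces precisely the threshold $\gamma<2$. One also has to handle the zero set of $\omega$, for instance by replacing $|\omega|$ with $(|\omega|^2+\epsilon)^{1/2}$ and letting $\epsilon\to 0$.
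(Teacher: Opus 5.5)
Your strategy is the paper's: Atiyah's $L^2$-index theorem for $\bar\partial$ on the universal cover, reduction to $L^2$ holomorphic $p$-forms, lifting the spectral non-negativity, the Weitzenb\"ock formula with the refined Kato inequality $|\nabla\omega|^2\geq 2|d|\omega||^2$, and infinite volume of $\tilde M$. The differences are technical.
\begin{itemize}
\item The paper obtains non-negativity of the lifted operator by citing the pushing-down technique or Fischer-Colbrie--Schoen. Your lifted positive ground state, with the substitution $\eta=\tilde\phi_p w$ for compactly supported $\eta$, is exactly the mechanism behind the latter.
\item The paper gets $\psi\in W^{1,2}$ and $|\psi|\in W^{1,2}$ from Dodziuk's theorem. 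It can therefore insert $|\psi|$ directly into the lifted quadratic form, with no cut-offs.
\end{itemize}

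However, the step you call the main obstacle is misformulated, and changing the power is the wrong remedy. For $f=|\omega|$, Bochner plus Kato gives
\[
f\Delta f\leq -|df|^2-\mathfrak{r}_p f^2,
\]
not $\frac12\Delta f+\mathfrak{r}_pf\leq 0$. The latter holds for $|\omega|^2$, and it would only lead to $\gamma<\frac12$. The extra gradient term is exactly what produces the constant $2$, and the correct choice is simply $\alpha=1$.

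Concretely, integrate the smooth identity $\frac12\Delta|\omega|^2=-|\nabla\omega|^2-\langle\mathrm{ric}^{p,0}\omega,\omega\rangle$ against $\chi_R^2$ and use Kato almost everywhere. This also removes any need to regularise at the zeros of $\omega$. You get
\[
2\int\chi_R^2|df|^2+\int\mathfrak{r}_p\chi_R^2f^2\leq -2\int\chi_R f\langle d\chi_R,df\rangle .
\]
The lifted quadratic form evaluated on $\chi_R f$ gives $\gamma\int|d(\chi_Rf)|^2+\int\mathfrak{r}_p\chi_R^2f^2\geq 0$. Subtracting, the potential terms cancel. Absorbing the cross term then yields $(2-\gamma-\epsilon)\int\chi_R^2|df|^2\leq C_\epsilon R^{-2}\|\omega\|^2_{L^2}$, so $df=0$ as $R\to\infty$. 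This is precisely the paper's inequality chain, with cut-offs in place of Dodziuk's theorem.

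A power $\alpha<1$ would not help. Taking $\alpha=\frac12$ does give the pointwise inequality $2\Delta u+\mathfrak{r}_pu\leq 0$. But $u=|\omega|^{1/2}$ need not be in $L^2$, so it cannot be used as a test function, and the cut-off errors are then not controlled by $\int|\omega|^2$.

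Finally, for $\gamma=0$ there is no ground state, since the operator is just multiplication by $\mathfrak{r}_p$. In that case $\mathfrak{r}_p\geq 0$, so you may replace $\gamma$ by any value in $(0,2)$.
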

Also in this case we refer to \eqref{function} for the precise definition of $\frak{r}_p$. In the second and last section of this paper, we revolve our attention to the case of Riemannian manifolds. Motivated by the research of a Riemannian analogue of Th. \ref{rationallyint}, we looked for a spectral positivity assumption that implies the triviality of $\pi_1(M)$ and the vanishing of $H^k(M,\mathbb{R})$ on a compact Riemannian manifold $(M,g)$. This is partially accomplished in Th. \ref{simplyr}:
\begin{teo}
\label{simplyrint}
Let $(M,g)$ be a compact Riemannian manifold of dimension $m>1$ such  that the operator 
\begin{equation}
\label{wswsint}
\frac{m}{m-1}\Delta+w_{k,1}:L^2(M,g)\rightarrow L^2(M,g)
\end{equation}
 has entirely positive spectrum for each $k=1,...,m-1$. 
\begin{enumerate}
\item If $m$ is even and $M$ is orientable, then $M$ is a simply connected, real homology sphere.
\item  If $m$ is even and $M$ is non-orientable, then $M$ is a simply connected, real homology sphere with $\pi_1(M)\cong\mathbb{Z}_2$.
\item If $m$ is odd then $M$ is a real homology sphere. In particular $M$ is orientable.
\end{enumerate}
\end{teo}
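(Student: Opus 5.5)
We have to guess what $w_{k,1}$ is: presumably the smallest eigenvalue of the Weitzenböck curvature term $\mathcal{R}_k$ acting on $k$-forms, so that the Bochner formula reads $\Delta_k = \nabla^*\nabla + \mathcal{R}_k$ and $\langle \mathcal{R}_k\omega,\omega\rangle \ge w_{k,1}|\omega|^2$. Note that $w_{1,1}$ is then the smallest eigenvalue of $\mathrm{Ric}_g$. The plan is a Bochner argument with a refined Kato inequality, combined with the spectral hypothesis.

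\textbf{Step 1: vanishing of $H^k(M,\mathbb{R})$ for $1\le k\le m-1$.} Let $\omega$ be a harmonic $k$-form and set $f=|\omega|$. The Bochner formula gives
\[
0=\|\nabla\omega\|^2+\int_M\langle\mathcal{R}_k\omega,\omega\rangle\,\dvol \ \ge\ \|\nabla\omega\|^2+\int_M w_{k,1}f^2\,\dvol .
\]
For harmonic forms the refined Kato inequality holds: $|\nabla\omega|^2\ge c_k|d|\omega||^2$ with $c_k=1+\frac{1}{\max(k,m-k)}\ge \frac{m}{m-1}$ for $1\le k\le m-1$. Since $f$ is Lipschitz and lies in the form domain, we obtain
\[
0\ \ge\ \frac{m}{m-1}\|df\|^2+\int_M w_{k,1}f^2\,\dvol \ =\ \Big\langle \big(\tfrac{m}{m-1}\Delta+w_{k,1}\big)f,f\Big\rangle .
\]
Positivity of the spectrum of the operator in \eqref{wswsint} then forces $f=0$. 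Hodge theory gives $b_k=0$ for $1\le k\le m-1$, so $M$ is a real homology sphere when it is orientable; when it is not, $b_m=0$. For $m$ odd we also need orientability. Here $\chi(M)=0$ always holds, so we argue instead on the orientation double cover $\tilde M$. The hypothesis is local in curvature and the pulled-back eigenfunction structure lifts, so positivity of the spectrum lifts to the finite cover. Indeed the bottom of the spectrum on $\tilde M$ is attained by a positive ground state, which is the lift of the ground state on $M$. Hence $\tilde M$ also satisfies $b_k(\tilde M)=0$ for $0<k<m$. Then $\chi(\tilde M)=1-1=0$ for $m$ odd, which gives no information. Instead use $b_m(\tilde M)=1=b_m(M)+\dim H^m_{-}$ together with $b_0(\tilde M)=1$: the splitting $H^*(\tilde M)=H^*(M)\oplus H^*(M;\text{orientation sheaf})$ and Poincaré duality give $H^0(M;o)\cong H^m(M)$. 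A count via $\chi(\tilde M)=2\chi(M)$, namely $1+(-1)^m b_m(\tilde M)=2(1+(-1)^m b_m(M))$, forces for $m$ odd $1-b_m(\tilde M)=2-2b_m(M)$, i.e.\ $b_m(\tilde M)=1$ and $b_m(M)=1$. So $M$ is orientable. For $m$ even, the same count gives $b_m(M)=0$, so $M$ is non-orientable with $\chi(M)=1$ whenever it is not orientable.

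\textbf{Step 2: fundamental group for $m$ even.} Take the universal cover $\hat M$. First, $\pi_1(M)$ is finite: the $k=1$ hypothesis is the Carron--Rose/Antonelli--Xu type condition with $\gamma=\frac{m}{m-1}\le\frac{m-1}{m-2}$, so the cited result \cite[Th. 1]{AntoXu} applies. The lifting argument of Step 1 then shows that $\hat M$ is compact, satisfies the same spectral hypothesis, and so has $\chi(\hat M)=2$ (since $m$ is even). By multiplicativity, $\chi(\hat M)=|\pi_1(M)|\chi(M)$. In the orientable case $\chi(M)=2$, so $|\pi_1(M)|=1$. In the non-orientable case $\chi(M)=1$, so $|\pi_1(M)|=2$, giving $\pi_1(M)\cong\mathbb{Z}_2$ with the orientation cover being a simply connected homology sphere (this is how I read item 2).

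\textbf{Main obstacle.} The delicate point is justifying the refined Kato constant $\frac{m}{m-1}$ uniformly in $k$ at points where $\omega$ vanishes, and applying it to $f$ in the quadratic form sense. I would handle this by working with $f_\epsilon=\sqrt{|\omega|^2+\epsilon}$, using the pointwise inequality on $\{\omega\ne0\}$, and letting $\epsilon\to0$. A second point to check is that the ground-state argument really transfers strict positivity of the spectrum to the finite covers.
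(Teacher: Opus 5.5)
Your proof is correct, and its backbone matches the paper's. You get the vanishing of $H^k(M,\mathbb{R})$ for $1\le k\le m-1$ from the Weitzenb\"ock formula, the refined Kato constant $\frac{m}{m-1}$ and min--max. You then use $\chi(\hat M)=|\pi_1(M)|\,\chi(M)$ on the compact universal cover.

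The genuine difference is how you show $\pi_1(M)$ is finite. The paper stays within $L^2$-Hodge theory:
\begin{itemize}
\item the pushing-down argument transfers the spectral bound to the (possibly infinite) universal cover;
\item the same Bochner--Kato estimate kills $L^2$-harmonic $k$-forms for $1\le k\le m-1$;
\item infinite volume and the Hodge star kill degrees $0$ and $m$;
\item Atiyah's $L^2$-index theorem would then give $\chi(M)=0$, contradicting $\chi(M)=2$.
\end{itemize}
You instead use only the $k=1$ hypothesis and Antonelli--Xu's spectral theorem, via $\frac{m}{m-1}\le\frac{m-1}{m-2}$. This is legitimate; the paper's own remark after the proof points it out. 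But it imports a much deeper result, and it does not literally cover $m=2$, where $\frac{m-1}{m-2}$ is undefined. In that case just note that $b_1(M)=0$ forces $M\cong S^2$ or $\mathbb{RP}^2$, or run the paper's $L^2$-index argument, which works uniformly in $m$. Your ground-state lifting of strict positivity to finite covers is fine: a positive eigenfunction on a compact connected manifold must belong to the lowest eigenvalue.

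In the non-orientable even case you are more economical than the paper. From $b_m(M)=0$ you get $\chi(M)=1$, hence $|\pi_1(M)|=2$. The paper instead applies the orientable case to the orientation double cover. It then re-proves by averaging that closed forms on $M$ are exact, which your Step 1 already gives, since the Bochner argument never uses orientability. Your reading of item 2 is what the paper actually proves: $\pi_1(M)\cong\mathbb{Z}_2$, and the double cover is a simply connected real homology sphere. $M$ itself has $b_m=0$, so it cannot literally be a homology sphere.

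For $m$ odd, by contrast, your double-cover count is an unnecessary detour. $\chi(M)=0$ holds for every closed odd-dimensional manifold. Together with $b_k(M)=0$ for $0<k<m$, it gives $1-b_m(M)=0$ at once; this is the paper's argument. Your identity $\chi(\tilde M)=2\chi(M)$ reduces to exactly this. It is also only valid as an argument by contradiction: for orientable $M$ the orientation cover is disconnected, and your formula for $\chi(\tilde M)$ then fails.
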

In \eqref{wswsint} $w_{k,1}$ stands for the function that assigns to each $x\in M$ the smallest eigenvalue of the endomorphism $W_k:\Lambda^kT^*M\rightarrow \Lambda^kT^*M$ appearing in the Weitzenb\"ock formula with $k=1,...,m-1$. We refer to \eqref{eigenweit} for the precise definition. Besides the above result, in the second section we also prove a Riemannian analogue of Th. \ref{finitenessint}:
\begin{teo}
\label{finitenessrint}
Let $(M,g)$ be a compact Riemannian manifold of  dimension $m>1$. Assume that $\chi(M)\neq 0$ and the operator $$\gamma\Delta+w_{k,1}:L^2(M,g)\rightarrow L^2(M,g)$$ has entirely non-negative spectrum for some $\gamma\in [0,\frac{m}{m-1})$ and each $k=1,...,m-1$. Then $M$ has finite fundamental group.
\end{teo}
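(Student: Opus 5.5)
The plan is to pass to the universal cover and use a vanishing theorem for $L^2$ harmonic forms on it, combined with Atiyah's $L^2$-index theorem. Let $\pi:\tilde M\to M$ be the universal covering with the pulled-back metric $\tilde g=\pi^*g$, and assume by contradiction that $\pi_1(M)$ is infinite, so that $\tilde M$ is a noncompact complete manifold with infinite volume. By Atiyah's $L^2$-index theorem the $L^2$-Euler characteristic of $\tilde M$ equals $\chi(M)$:
\[
\chi(M)=\sum_{k=0}^m(-1)^k\dim_{\pi_1(M)}\mathcal{H}^k_{(2)}(\tilde M,\tilde g),
\]
where $\dim_{\pi_1(M)}$ is the von Neumann dimension. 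Since $\chi(M)\neq0$, some space of $L^2$ harmonic $k$-forms on $\tilde M$ must be nonzero. The contradiction will come from showing that all these spaces vanish.

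The cases $k=0$ and $k=m$ are straightforward. An $L^2$ harmonic function on a complete manifold is constant, and because $\tilde M$ has infinite volume it must be zero. The case $k=m$ reduces to $k=0$ by Hodge star; if $M$ is non-orientable, one passes first to the orientation double cover, which changes neither the finiteness of $\pi_1$ nor the nonvanishing of $\chi$, since $\chi$ simply doubles.

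For $1\le k\le m-1$, first lift the spectral assumption: $\gamma\Delta+w_{k,1}\ge 0$ on $M$ implies $\gamma\tilde\Delta+\tilde w_{k,1}\ge0$ on $\tilde M$. This holds because the bottom of the spectrum of a periodic Schrödinger operator on a covering is at least the bottom on the base; one takes a positive ground state $\phi$ on $M$, lifts it, and uses it for a ground-state (Agmon–Allegretto–Piepenbrink) argument. Now take $\omega\in\mathcal{H}^k_{(2)}(\tilde M)$. The Weitzenböck formula $\Delta_k=\nabla^*\nabla+W_k$ and the refined Kato inequality for harmonic forms, $|\nabla\omega|^2\ge(1+\kappa_k)|d|\omega||^2$, combine to give
\[
|\omega|\Delta|\omega|+w_{k,1}|\omega|^2\le-\kappa_k|d|\omega||^2,
\]
where $\kappa_k=\frac{1}{\max(k,m-k)}\ge\frac1{m-1}$. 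Set $f=|\omega|$, multiply by a cutoff $\chi_R^2$, integrate, and then insert $f\chi_R$ into the spectral inequality $\int\gamma|d(f\chi_R)|^2+w_{k,1}f^2\chi_R^2\ge0$. Since $\gamma<\frac m{m-1}\le1+\kappa_k$, the two inequalities together force $\int|df|^2\chi_R^2\le C\int f^2|d\chi_R|^2\to0$. Hence $f$ is constant, and infinite volume forces $f=0$. Every $L^2$ harmonic space therefore vanishes, which contradicts $\chi(M)\neq0$, so $\pi_1(M)$ is finite.

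I expect the main obstacle to be the precise absorption step. It requires the sharp Kato constant for harmonic $k$-forms together with the right choice of power $f^{\alpha}$, with $\alpha$ tied to $\gamma$, to use in the test function, so that the strict inequality $\gamma<m/(m-1)$ leaves a positive coefficient in front of $\int|df|^2\chi_R^2$. The error terms produced by the cutoff also have to be controlled using only $f\in L^2$. If using $f$ itself does not close the estimate, the first thing to try is $f^{\alpha}$ with $\alpha=\frac{1}{2-\gamma'}$-type exponents, following the Carron–Rose / Antonelli–Xu method.
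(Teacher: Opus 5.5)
Your proposal is correct and follows the paper's proof: you pass to the universal cover, lift the non-negativity of $\gamma\Delta+w_{k,1}$ via a positive ground state (the Fischer-Colbrie--Schoen type argument the paper cites), use Weitzenb\"ock plus the refined Kato inequality (constant $\geq \frac{m}{m-1}$) to force $|\psi|$ to be constant and hence zero by infinite volume, and conclude with Atiyah's $L^2$-index theorem against $\chi(M)\neq 0$. The only difference is technical and does not change the argument: the paper avoids cutoffs by invoking Dodziuk's result that $L^2$ harmonic forms on the cover lie in $W^{1,2}$, so $|\psi|$ can be inserted directly into the quadratic form. In your cutoff version the obstacle you feared does not arise: the estimate already closes with $f=|\omega|$ itself, with no $f^{\alpha}$ needed, because $1+\kappa_k-\gamma>0$ absorbs the cross term and $\int f^2|d\chi_R|^2\leq C R^{-2}\|f\|^2_{L^2}\to 0$.
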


Unlike Th. \ref{rationallyint}, the proofs of Th. \ref{finitenessint}, Th. \ref{simplyrint} and Th. \ref{finitenessrint} are mainly based on direct $L^2$-vanishing theorems and the Atiyah $L^2$-index theorem. Then we continue by proving some sufficient conditions expressed in terms of the curvature operator that allow to apply Th. \ref{simplyrint} and Th. \ref{finitenessrint}. We refer to Th. \ref{curvop} and Th. \ref{curvop2} for  the precise statements. We then conclude the second section by collecting some sufficient conditions that guarantees the spectral positivity of \eqref{wswsint} without necessarily implying that $W_k$ is positive.\\
Finally,  the appendix contains a characterization of the rational dimension of a compact K\"ahler manifolds in terms of the positivity of the minimal slope of its tangent bundle.\\

\noindent \textbf{Acknowledgment.} The author wishes to thanks Simone Diverio and Shiyu Zhang for interesting conversations and emails. The author  acknowledge financial support from 2024 Sapienza research grant {\em New research trends in Mathematics at Castelnuovo} and  INdAM-GNSAGA Project, codice CUP E53C24001950001.

\section{The K\"ahler case}

Let $(M,J,h)$ be a K\"ahler manifold of real dimension $2m$. We denote with $\omega=h(\cdot,J\cdot)$ the corresponding K\"ahler form. Let $\mathrm{Ric}_h$ be the Ricci tensor of the metric $h$ and  let $\mathrm{ric}_h$ be the endomorphism of $TM$ such that $\mathrm{Ric}_h(\cdot,\cdot)=h(\mathrm{ric}_h \cdot, \cdot)$. Since $\mathrm{ric}_h$ commutes with $J$, the complex structure of $M$, each eigenspace of $\mathrm{ric}_h$ is preserved by $J$ and thus has even dimension. There exist therefore $m$ real-valued continuous functions $r_j:M\rightarrow \mathbb{R}$, $j=1,...,m$ such that 
$$
r_1\leq r_2\leq...\leq r_{m}$$
 and for each $x\in M$ 
\begin{equation}
\label{eigenricci}
\{r_1(x),r_1(x),r_2(x),r_2(x),...,r_{m}(x),r_{m}(x)\}
\end{equation}
is the set of eigenvalues of $\mathrm{ric}_{h,x}:T_xM\rightarrow T_xM$. Since $\mathrm{ric}_h$ commutes with $J$ his $\mathbb{C}$-linear extension to $TM\otimes \mathbb{C}$ preserves both $T^{1,0}M$ and $T^{0,1}M$. Let us denote with $\mathrm{ric}^{1,0}$ the restriction of $\mathrm{ric}_h$ to $T^{1,0}M$. Then for each $x\in M$ the $m$ eigenvalues of 
\begin{equation}
\label{eigenvalues-ricci}
\mathrm{ric}_x^{1,0}:T_x^{1,0}M\rightarrow T_x^{1,0}M
\end{equation}
 are given by $$\{r_1(x),r_2(x),...,r_{m-1}(x),r_{m}(x)\}.$$ Finally, with a little abuse of notation, we still denote with $\mathrm{ric}^{1,0}$ the endomorphism of $\Lambda^{1,0}(M)$ induced by $\mathrm{ric}^{1,0}:T^{1,0}M\rightarrow T^{1,0}M$. Let us denote with $\nabla:C^{\infty}(M,T^{1,0}(M))\rightarrow C^{\infty}(M,T^*M\otimes T^{1,0}(M))$ the Chern connection of $(M,J,h)$ and, with a little abuse of notation, let us still denote with $$\nabla:C^{\infty}(M,\Lambda^{p,0}(M))\rightarrow C^{\infty}(M,T^*M\otimes \Lambda^{p,0}(M))$$ the connection induced by the Chern connection. Furthermore let $$\Delta_{\overline{\partial},p,q}:\Omega^{p,q}(M)\rightarrow \Omega^{p,q}(M),\quad  \Delta_{\overline{\partial},p,q}:=\overline{\partial}_{p,q}^t\circ \overline{\partial}_{p,q}+\overline{\partial}_{p,q-1}\circ \overline{\partial}^t_{p,q-1}$$ be the Hodge-Kodaira Laplacian acting on $\Omega^{p,q}(M)$. Then the Weitzenb\"ock formula tells us that $$2\Delta_{\overline{\partial},p,0}=\nabla^t\circ \nabla+\mathrm{ric}_h^{p,0}$$ where $\mathrm{ric}^{p,0}:\Lambda^{p,0}(M)\rightarrow \Lambda^{p,0}(M)$ denotes the vector bundle endomorphism obtained by $\mathrm{ric}^{1,0}:\Lambda^{1,0}(M)\rightarrow \Lambda^{1,0}(M)$ extended as a derivation and $\nabla^t:C^{\infty}(M,T^*M\otimes \Lambda^{p,0}(M))\rightarrow C^{\infty}(M,\Lambda^{p,0}(M))$ is the formal adjoint of $\nabla:C^{\infty}(M,\Lambda^{p,0}(M))\rightarrow C^{\infty}(M,T^*M\otimes \Lambda^{p,0}(M))$ with respect to $h$, see e.g. \cite{Moroianu}. Since point by point the eigenvalues of $\mathrm{ric}^{1,0}:\Lambda^{1,0}(M)\rightarrow \Lambda^{1,0}(M)$ are given by $\{r_1(x),r_2(x),...,r_m(x)\}$ it is clear that the eigenvalues of $\mathrm{ric}^{p,0}_x$ are given by all the combinations $$r_{j_1}(x)+....+r_{j_p}(x)$$ with $j_1,...,j_p\in \{1,...,m\}$ and $j_1<j_2<...<j_p$. Let us now define the following continuous functions 
\begin{equation}
\label{function}
\frak{r}_p:M\rightarrow \mathbb{R},\quad\quad \frak{r}_p(x):=r_1(x)+...+r_p(x)
\end{equation}

 Note that $r_1=\frak{r}_1$, $pr_1\leq \frak{r}_p$ with $1\leq p\leq m$ and for any $p\in \{1,...,m\}$ and $\theta\in \Omega^{p,0}(M)$ we have $$h(\mathrm{ric}^{p,0}\theta,\theta)\geq h(\frak{r}_p\theta,\theta).$$
For various reasons that will become clear in the proof of the next result we are interested in the following Schr\"odinger-type differential operators: $\gamma\Delta+\frak{r}_p:C^{\infty}(M)\rightarrow C^{\infty}(M)$, with $\gamma$ a real, positive constant. Since $M$ is compact, $\frak{r}_p$ is continuous and real-valued and $\Delta$ is elliptic it is not difficult to verify that $\gamma\Delta+\frak{r}_p$ is essentially self-adjoint on $L^2(M,h)$. With a little abuse of notation we denote its $L^2$-closure as
\begin{equation}
\label{schr}
\gamma\Delta+\frak{r}_p:L^2(M,h)\rightarrow L^2(M,h)
\end{equation}
and we note, again by the reasons explained above, that the domain of \eqref{schr} equals the domain of the $L^2$-closure of the Laplace-Beltrami operator $\Delta$ and that the corresponding graph norms are equivalent. Moreover, again by the fact that $M$ is compact, $\frak{r}_p$ is real and continuous  and $\Delta$ is elliptic, we can easily deduce that \eqref{schr} has entirely discrete spectrum. We denote its eigenvalues with $$\sigma_1\leq \sigma_2\leq...\leq \sigma_k\leq...$$ We recall now that a compact, complex manifold $N$ is called {\em rationally connected} if any two points of $N$ can be joined by a chain of rational curves. We refer to the appendix or to \cite{De2001} for further details. We have all the ingredients for the first main result of this note. 

\begin{teo}
\label{rationally}
Let $(M,J,h)$ be a complete K\"ahler manifold of real dimension $2m$.
\begin{enumerate}
\item If $m>1$ and the operator $$\gamma\Delta+r_1:L^2(M,h)\rightarrow L^2(M,h)$$ has entirely positive spectrum for some $\gamma\in [0,\frac{4}{2m-1})$ then $M$ is compact and rationally connected. 
\item If $M$ is compact and the operator $$\gamma\Delta+r_1:L^2(M,h)\rightarrow L^2(M,h)$$ has entirely positive spectrum for some $\gamma\in [0,+\infty)$ then $M$ is rationally connected.
\end{enumerate}
\end{teo}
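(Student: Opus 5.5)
The plan is to reduce both statements to the criterion of Campana--Demailly--Peternell (Kollár--Miyaoka--Mori in the projective case). It says that a compact K\"ahler manifold $M$ is rationally connected as soon as $H^0(M,(\Omega^1_M)^{\otimes p})=0$ for every $p\geq 1$. Applied to $p=2$, this vanishing also gives $h^{2,0}=0$, so $M$ is projective.

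Part (1) will then follow from part (2) once compactness is established. Since $\mathrm{Ric}_h\geq r_1 h$ as quadratic forms and the real dimension is $n=2m$, the operator $\gamma\Delta+r_1$ with $\gamma<\frac{4}{2m-1}$ satisfies the hypothesis of the Antonelli--Xu theorem \cite[Th. 1]{AntoXu}. That theorem requires $\gamma\leq\frac{n-1}{n-2}=\frac{2m-1}{2m-2}$, and one checks $\frac{4}{2m-1}\leq\frac{2m-1}{2m-2}$ for $m\geq 2$, since it amounts to $(2m-1)^2\geq 8m-8$, i.e.\ $(2m-3)^2\geq 0$. Hence $M$ is compact, and we may assume compactness from now on.

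For part (2), let $s$ be a nonzero holomorphic section of $(\Omega^1)^{\otimes p}$. The first step is a reduction: $s^{\otimes k}$ is a nonzero holomorphic section of $(\Omega^1)^{\otimes kp}$, so it suffices to rule out nonzero sections for all sufficiently large $p$. This is what will allow $\gamma$ to be arbitrary in $[0,+\infty)$.

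The next step is a Bochner computation. For a holomorphic section of a covariant tensor bundle, the Weitzenb\"ock formula gives $\Delta_{\overline{\partial}}s=0$, and the curvature term is $\mathrm{ric}$ acting as a derivation on $p$ slots. Its lowest eigenvalue is at least $p\,r_1$. Together with the Kato inequality $|d|s||\leq|\nabla s|$, this yields, with the paper's convention $\Delta=d^td\geq 0$ and up to a universal factor $c>0$ depending on normalisations, the weak differential inequality
$$\Delta|s|+c\,p\,r_1|s|\leq 0 \quad\text{on } M .$$

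The third step converts this inequality into a contradiction with spectral positivity. Put $u=|s|$ and test the quadratic form of $\gamma\Delta+r_1$ on $\varphi=u^{\alpha}$ with $\alpha\geq 1/2$. Integrating the inequality against $u^{2\alpha-1}$ gives
$$\int|d\varphi|^2=\frac{\alpha^2}{2\alpha-1}\int u^{2\alpha-2}\langle du,du\rangle\,(2\alpha-1)\leq -\frac{\alpha^2}{2\alpha-1}\,c\,p\int r_1\varphi^2 ,$$
and therefore
$$\int\bigl(\gamma|d\varphi|^2+r_1\varphi^2\bigr)\leq \Bigl(1-\frac{\gamma\,c\,p\,\alpha^2}{2\alpha-1}\Bigr)\int r_1\varphi^2 .$$
The delicate point is that $r_1$ need not have a sign, so this estimate alone does not finish the argument. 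I would first use spectral positivity to prove "total positivity": if $\varphi\not\equiv 0$ then $\int r_1\varphi^2>-\gamma\int|d\varphi|^2$. Then I would combine the two inequalities, choosing $\alpha=1$ and $p$ large so that $\gamma c p\geq 1$. The first inequality gives $\gamma\int|du|^2\leq -\gamma c p\int r_1u^2$, so that
$$\int\bigl(\gamma|du|^2+r_1u^2\bigr)\leq\Bigl(1-\frac{1}{\gamma c p}\Bigr)\gamma\int|du|^2 .$$
Spectral positivity forces the left side to be positive, and after rearranging one obtains $\int|du|^2\leq 0$ in the limit. So $u$ is constant, and then $\int r_1u^2\leq 0$ contradicts $\sigma_1>0$. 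Hence $H^0(M,(\Omega^1)^{\otimes p})=0$ for $p$ large, and therefore for all $p$.

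I expect the main obstacle to be this last step: getting the constants in the Bochner inequality and the choice of $\alpha$ and $p$ right so that the argument closes for every $\gamma\geq 0$. Some care is needed at the zeros of $s$, which I would handle by working with $\sqrt{|s|^2+\epsilon}$ and letting $\epsilon\to 0$.
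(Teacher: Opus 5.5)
There are two genuine gaps, one algebraic and one analytic.

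\textbf{The rational connectedness criterion.} The criterion you start from is not a theorem. The implication ``$H^0(M,(\Omega^1_M)^{\otimes p})=0$ for all $p\ge 1$ implies rational connectedness'' is Mumford's conjecture. Koll\'ar--Miyaoka--Mori proved it only in dimension at most three, and it is open in general (it would follow from the non-vanishing conjecture).

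What Campana--Demailly--Peternell actually prove requires \emph{twisted} vanishing: no invertible subsheaf of $\Omega^p_M$ may be pseudo-effective. Equivalently, $H^0(M,\Omega^p_M\otimes L^{-1})=0$ for every pseudo-effective $L$; in the projective version, $H^0(M,(\Omega^1_M)^{\otimes m}\otimes A^{k})=0$ for $m\ge C_Ak$.

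The MRC argument shows why twisting is unavoidable. The base $Y$ is not uniruled (Graber--Harris--Starr), so $K_Y$ is only known to be pseudo-effective (Ou, or BDPP in the projective case). Its pull-back is therefore a pseudo-effective invertible subsheaf of $\Omega^{\dim Y}_M$, not a holomorphic form.

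The paper's Lemma \ref{prop-vanishing} handles exactly this case. It gives $L$ a singular semi-positive metric $e^{-\varphi}$, whose psh weight only adds the positive current $i\partial\bar\partial\varphi$ to $i\partial\bar\partial\log|s|^2$. Proposition \ref{coro-rationaldimension} then runs the MRC argument. Your proposal never deals with twisted sections.

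\textbf{The analytic step.} Your analytic step does not close, and the tensor-power reduction points the wrong way. With $\alpha=1$ your two inequalities give $\int r_1u^2\le-\frac{1}{cp}\int|du|^2$, hence
\[
0<\sigma_1\int_M u^2\le\gamma\int_M|du|^2+\int_M r_1u^2\le\Bigl(\gamma-\frac{1}{cp}\Bigr)\int_M|du|^2 .
\]
This is a contradiction only when $\gamma cp\le 1$, i.e.\ for \emph{small} $p$. Once $\gamma cp\ge 1$ nothing follows, and there is no limit to take: $s\mapsto s^{\otimes k}$ just replaces $u$ by $u^k$ and adds no information. Optimizing over $\alpha\ge\frac12$ does not help either, since $(2\alpha-1)/\alpha^2\le 1$.

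What you are missing is the term you discarded. The Bochner identity plus Cauchy--Schwarz gives the stronger, scale-invariant inequality $\Delta\log u\le -c\,p\,r_1$, equivalently $\Delta u+cpr_1u\le -|du|^2/u$. In the paper's language, this says $i\partial\bar\partial\log|s|^2$ dominates the Ricci term as currents.

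Testing this on $\varphi^2$ with $\varphi=u^\alpha$ gives $\int r_1\varphi^2\le-\frac{2}{\alpha cp}\int|d\varphi|^2$. This contradicts spectral positivity as soon as $\alpha\le 2/(\gamma cp)$. More simply, integrating the inequality over the compact $M$ gives $p\int_M r_1\le 0$. On the other hand, spectral positivity for any $\gamma$ gives $\int_M r_1>0$ by testing the quadratic form on the constant $1$ (Lemma \ref{totalp-spectralp}). So only the total positivity of $r_1$ matters and $\gamma$ plays no role, which is exactly the paper's proof.

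Your reduction of part (1) to part (2) via Antonelli--Xu, using $\frac{4}{2m-1}\le\frac{2m-1}{2m-2}$, is fine.
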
 
To proving the above theorem, we need to show the following statement and a vanishing result.
\begin{lemma}
\label{totalp-spectralp}
Let $(M,g)$ be a compact Riemannian manifold, let $\Delta$ its Laplace-Beltrami operator and let $f:M\rightarrow \mathbb{R}$ be a continuous function. Then $$\int_Mf\dvol_g>0$$ if and only if there exists a constant $\gamma>0$ such that $$\gamma\Delta+f:L^2(M,g)\rightarrow L^2(M,g)$$ has entirely positive spectrum.
\end{lemma}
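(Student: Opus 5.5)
Both implications come from looking at the lowest eigenvalue $\sigma_1(\gamma)$ of $\gamma\Delta+f$ through its Rayleigh quotient. Since the spectrum is discrete, $\sigma_1(\gamma)=\inf\{\gamma\|du\|^2+\int_M fu^2\dvol_g : u\in H^1(M),\ \|u\|_{L^2}=1\}$, and the spectrum is entirely positive if and only if $\sigma_1(\gamma)>0$.

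\emph{Necessity.} Suppose $\gamma\Delta+f$ has entirely positive spectrum for some $\gamma>0$. Testing the quadratic form on the constant function $u=\vol_g(M)^{-1/2}$ gives
\[
0<\sigma_1(\gamma)\leq \frac{1}{\vol_g(M)}\int_M f\dvol_g,
\]
since $du=0$. Hence $\int_M f\dvol_g>0$.

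\emph{Sufficiency.} Assume $\int_M f\dvol_g>0$ and argue by contradiction. Suppose $\sigma_1(\gamma)\leq 0$ for every $\gamma>0$. The form is increasing in $\gamma$, so it is enough to take $\gamma=n\to\infty$. For each $n$ pick a normalized first eigenfunction $u_n$, so that
\[
n\|du_n\|^2+\int_M fu_n^2\dvol_g\leq 0 .
\]
Because $|\int_M fu_n^2\dvol_g|\leq\|f\|_\infty$, this yields $\|du_n\|^2\leq \|f\|_\infty/n\to 0$. In particular $(u_n)$ is bounded in $H^1(M)$. By Rellich compactness on the compact manifold $M$, a subsequence converges in $L^2$ to some $u$ with $\|u\|=1$. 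By weak lower semicontinuity, $du=0$, so $u$ is constant on each connected component.

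We may assume $M$ is connected; otherwise one runs the argument componentwise, or notes that the statement is meant for connected $M$. Then $u=\pm\vol_g(M)^{-1/2}$. Since $f$ is bounded and $u_n\to u$ in $L^2$, we have $\int_M fu_n^2\dvol_g\to\int_M fu^2\dvol_g$. Using $n\|du_n\|^2\geq 0$, we get $\int_M fu_n^2\dvol_g\leq 0$ for all $n$, and passing to the limit gives $\vol_g(M)^{-1}\int_M f\dvol_g\leq 0$. This is a contradiction, so some $\gamma>0$ makes the spectrum entirely positive.

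The only delicate point is the compactness step together with connectedness. If $M$ is disconnected, the condition must be imposed on each component: a constant supported on a single component would otherwise give a counterexample. So the lemma is to be read for connected $M$, as is implicit in the paper.
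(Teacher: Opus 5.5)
Your proof is correct. The necessity direction is the same as the paper's: test the quadratic form on constants. For sufficiency you take a different route.

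The paper first reduces to smooth $f$ by uniform approximation. It then invokes a quantitative theorem of Dabrowski--Lockhart: $\Delta+\delta f$ has positive spectrum as soon as the Poincar\'e constant $P$ (the inverse of the first nonzero eigenvalue of $\Delta$) is bounded by an explicit expression in $\int_M\delta f\dvol_g$, $\|\delta f\|_{L^\infty}$ and $\vol_g(M)$. That expression scales like $1/\delta$, so the bound holds for small $\delta$, and $\gamma=1/\delta$ works.

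You replace this cited estimate by a self-contained Rellich compactness argument by contradiction. What you gain is that nothing external is needed and no smoothing of $f$ is required; your argument works verbatim for bounded measurable $f$. What you lose is effectivity: you only get the existence of some $\gamma$. The paper's route produces an explicit admissible $\gamma$ in terms of $P$, $\|f\|_{L^\infty}$, $\vol_g(M)$ and $\int_M f\dvol_g$.

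If you want an explicit bound within your own framework, write $u=c+v$ with $c$ constant and $v\perp 1$. Using $\|dv\|^2\geq P^{-1}\|v\|^2$ together with $|\int_M fcv\dvol_g|\leq |c|\,\|f\|_{L^\infty}\vol_g(M)^{1/2}\|v\|$ reduces the question to positivity of a $2\times 2$ quadratic form, which gives a bound directly.

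Your remark on connectedness is also correct. The statement fails for disconnected $M$, and the paper uses connectedness implicitly: its Poincar\'e constant controls $u$ minus its mean only when the constants form the whole kernel of $\Delta$.
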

\begin{proof}
Clearly if $\gamma\Delta+f:L^2(M,g)\rightarrow L^2(M,g)$ has entirely positive spectrum then $\int_Mf\dvol_g>0$ as $$\int_Mf\dvol_g=\langle(\gamma\Delta+f)1,1\rangle_{L^2(M,g)}>0.$$ Conversely let us assume that $\int_Mf\dvol_g>0$. Without loss of generality we can assume that $f$ is smooth, as for every $\epsilon>0$ there exists $f_{\epsilon}\in C^{\infty}(M,\mathbb{R})$ such that $|f(x)-f_{\epsilon}(x)|<\epsilon.$ It is clear that for a sufficiently small $\delta>0$ we have $$P\leq \frac{\int_M \delta f \dvol_g}{\|\delta f\|_{L^{\infty}(M)}(4\vol_g(M)\|\delta f\|_{L^{\infty}(M)}+\int_M \delta f\dvol_g)}$$ with $P$ denoting the Poincar\'e constant of $(M,g)$, that is the inverse of the first non-zero eigenvalue of $\Delta$, see \cite[p. 626]{DabLock}. Then by \cite[Cor. 1.2]{DabLock} we can conclude that $\Delta+\delta f:L^2(M,g)\rightarrow L^2(M,g)$ has entirely positive spectrum and thus also $\frac{1}{\delta}\Delta+ f:L^2(M,g)\rightarrow L^2(M,g)$ has entirely positive spectrum.
\end{proof}
 
\begin{lemma}\label{prop-vanishing}
	Let $E$ be a holomorphic vector bundle of $\rank r$ on a compact complex manifold $X$ of real dimension $2m$. If $E$ is mean curvature   $(k-1)$-positive in the total sense, then for any pseudo-effective line bundle $L$ and $\forall k\leq p\leq m$, it holds that
	\begin{equation}\label{equa-vanishing}
		H^0(X,\Lambda^{p}E^{-1}\otimes L^{-1})=0.
	\end{equation}
\end{lemma}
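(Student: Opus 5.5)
The plan is to turn the total-sense positivity into pointwise positivity by a conformal change of the metric on $E$, and then to rule out a nonzero section with a degree argument against a Gauduchon metric. I read the hypothesis as follows. There are a Gauduchon metric $\omega$ on $X$ (so $\partial\bar\partial\omega^{m-1}=0$) and a hermitian metric $H$ on $E$. Let $K_H=\mathrm{tr}_\omega\, i\Theta(E,H)$ be the mean curvature endomorphism, with pointwise eigenvalues $\lambda_1\le\dots\le\lambda_r$. Then ``mean curvature $(k-1)$-positive in the total sense'' means
$$\int_X(\lambda_1+\dots+\lambda_k)\,\omega^m>0.$$

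\textbf{Step 1 (conformal change).} Replacing $H$ by $e^{-\psi}H$ adds the scalar $\mathrm{tr}_\omega(i\partial\bar\partial\psi)$ to $K_H$. It therefore shifts every $\lambda_j$ by the same function, and shifts $f:=\lambda_1+\dots+\lambda_k$ by $k\,\mathrm{tr}_\omega(i\partial\bar\partial\psi)$. The operator $\psi\mapsto \mathrm{tr}_\omega(i\partial\bar\partial\psi)\,\omega^m$ has image exactly the forms with zero integral, since $\omega$ is Gauduchon (Gauduchon's theorem on the adjoint having kernel the constants). So we can solve
$$k\,\mathrm{tr}_\omega(i\partial\bar\partial\psi)=c-f,\qquad c=\frac{\int_X f\,\omega^m}{\int_X\omega^m}>0.$$
After approximating $f$ by smooth functions as in Lemma \ref{totalp-spectralp} if needed, we may assume that for the new metric the sum of the $k$ smallest eigenvalues equals $c>0$ pointwise. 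Then $\lambda_k\ge c/k>0$, hence $\lambda_j>0$ for all $j>k$. It follows that for each $p\ge k$ every sum of $p$ distinct eigenvalues is at least $\lambda_1+\dots+\lambda_p\ge c>0$. The mean curvature of $\Lambda^pE^{-1}$ with the induced metric has eigenvalues minus these sums, so $K_{\Lambda^pE^{-1}}\le -c\,\mathrm{Id}$ pointwise.

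\textbf{Step 2 (degree argument).} Suppose $s\in H^0(X,\Lambda^pE^{-1}\otimes L^{-1})$ is nonzero. It gives a nonzero sheaf map $L\to\Lambda^pE^{-1}$, so $L$ becomes a subsheaf of $\Lambda^pE^{-1}$. Give $L$ the metric induced from $\Lambda^pE^{-1}$ through $s$; it is smooth and singular only along $Z=\{s=0\}$. The second fundamental form inequality, namely that curvature decreases on subbundles, gives on $X\setminus Z$
$$\mathrm{tr}_\omega\, i\Theta(L)\le \langle K_{\Lambda^pE^{-1}}\sigma,\sigma\rangle/|\sigma|^2\le -c.$$
Across $Z$ the induced weight is of the form $\log|s|^2$ plus a smooth term, so the correction is a positive current (Poincar\'e--Lelong) supported on $Z$. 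Pairing with the $\partial\bar\partial$-closed form $\omega^{m-1}$, which is well defined on Bott--Chern classes, gives
$$\deg_\omega L=\int_X c_1(L)\wedge\omega^{m-1}\le -c\int_X\omega^m/m<0.$$
Only an upper bound on $\deg_\omega L$ is needed, so the positive divisor contribution is harmless for the sign. Equivalently, one can apply Bochner's inequality $i\partial\bar\partial\log|s|^2\ge -\langle i\Theta s,s\rangle/|s|^2$ to the section of $\Lambda^pE^{-1}\otimes L^{-1}$ and integrate against $\omega^{m-1}$.

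\textbf{Step 3 (contradiction).} Since $L$ is pseudo-effective, $c_1(L)$ contains a closed positive $(1,1)$-current $T$. Hence $\deg_\omega L=\int_X T\wedge\omega^{m-1}\ge 0$, which is well defined because $\partial\bar\partial\omega^{m-1}=0$ and so the pairing depends only on the class. This contradicts Step 2, so $s=0$ and \eqref{equa-vanishing} follows.

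The main obstacle I expect is making the degree computations rigorous in the non-K\"ahler Gauduchon setting. One must check that $\int_X c_1(L)\wedge\omega^{m-1}$ is independent of the representative, for both the singular induced metric and the positive current $T$, and that the Poincar\'e--Lelong contribution along $Z$ enters with the favorable sign. I would handle both by working with Bott--Chern representatives and integrating by parts against $\omega^{m-1}$, using $\partial\bar\partial\omega^{m-1}=0$.
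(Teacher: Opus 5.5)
Your proof is correct, and its core is the paper's argument. Both integrate a Bochner-type inequality for $\log|s|^2$ against the $\partial\bar\partial$-closed form $\omega^{m-1}$ and use pseudo-effectivity of $L$ to get the sign. Your Steps 2--3 phrase this as comparing two representatives of $c_1(L)$: the curvature current of the metric induced by $L\hookrightarrow\Lambda^pE^{-1}$, and a closed positive current. The paper instead builds the singular semipositive metric $h_L$ directly into $|s|^2_H$ and lets $i\partial\bar\partial\varphi_\alpha\geq 0$ absorb the contribution of $L$. These two formulations are equivalent.

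The genuine difference is your Step 1. The paper never makes the mean curvature pointwise positive. It integrates with the given $(g,H)$ and obtains $\int_X\mathfrak{r}_{p,g}(E,H)\,\omega_g^m\leq 0$. The contradiction with $\int_X\mathfrak{r}_{k,g}(E,H)\,\omega_g^m>0$ then rests on the elementary pointwise inequality $\mathfrak{r}_p\geq\frac{p}{k}\mathfrak{r}_k$ for $p\geq k$, which the paper leaves implicit. In your increasing ordering this holds because the added eigenvalues $\lambda_{k+1},\dots,\lambda_p$ are all $\geq\lambda_k\geq\mathfrak{r}_k/k$.

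So your elliptic solve $k\,\tr_\omega(i\partial\bar\partial\psi)=c-f$ is not needed: that same inequality would close your Step 2 directly. It is nevertheless valid. The kernel of $\tr_\omega i\partial\bar\partial$ is the constants by the maximum principle, its index is zero, and the Gauduchon condition forces every $\tr_\omega(i\partial\bar\partial\psi)$ to have zero $\omega^m$-mean. It also gives you something the integral argument does not: total mean curvature $(k-1)$-positivity is equivalent to pointwise $(k-1)$-positivity after a conformal change of $H$, keeping the same Gauduchon metric.

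Two minor points.
\begin{itemize}
\item After smoothing $f$ you only get $\mathfrak{r}_k\geq c-2\epsilon$ rather than $\mathfrak{r}_k=c$. This is harmless, since pointwise positivity is all you use.
\item The section $s$ lives in a bundle of rank $\binom{r}{p}$, so its zero set may have components of codimension $\geq 2$. The positivity of the singular part of $i\partial\bar\partial\log|s|^2$ should therefore be justified by $\log|s|^2$ being quasi-plurisubharmonic, not by the scalar Poincar\'e--Lelong formula.
\end{itemize}
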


We refer to the appendix for the precise definition of the total positivity of the mean curvature.

\begin{proof}
	For simplicity, we will always denote the K\"ahler form of a Hermitian metric $g$ by $\w_g$. Because $E$ is mean curvature $k$-positive in the total sense, then there exists a Hermitian metric $h$ on $E$ and a Gauduchon metric $g$ on $X$ such that 
	\begin{equation}\label{equa-condition-positivity}
		\int_X \frak{r}_{k,g}(E,H)\cdot\frac{\w_g^m}{m!}>0.
	\end{equation}	
	Let $s$ be a holomorphic section of $\Lambda^{p}E^{-1}\otimes L^{-1}$. Since $L$ is pseudo-effective, it admits a singular semi-positive metric $h_L$, i.e., $\im\pp\log\Theta_{h_L}\geq0$. One can choose a finite covering $\{U_\alpha\}$ consists of coordinates neighborhoods such that on each $U_\alpha$, $$s=\eta_\alpha\otimes e_\alpha^{-1}$$ for some $\eta_\alpha\in\Lambda^{p}E^{-1}|_{U_\alpha}$ and a generator $e_\alpha$ of $L|_{U_\alpha}$ with $|e_\alpha|_{h_L}^2=e^{-\varphi_\alpha}$ for some $\varphi_\alpha\in\mathrm{PSH}(U_\alpha)$. There is a induced singular Hermitian metric $H:=\Lambda^{p}H^{-1}\otimes h_L^{-1}$ on $\Lambda^{p}E^{-1}\otimes L^{-1}$ and
	\begin{align}
		|s|_H^2=|s|^2_{\Lambda^{p}H^{-1}\otimes h_L^{-1}}=e^{\varphi_\alpha}|\eta_\alpha|_{\Lambda^{p}H^{-1}}^2.
	\end{align}
We argue by the contradiction. Suppose that $s\neq 0$ and then $\eta_\alpha\neq 0$. Recall that we have the following Bochner-type formula in the sense of currents (see e.g. \cite[Proposition 3.4]{ZZ25}),
	\begin{equation}\label{equa-inequality}
		\begin{split}
			\im\pp\log|s|^2_H\wedge \frac{\w_g^{m-1}}{(m-1)!}&\geq \im\pp\varphi_\alpha\wedge \frac{\w_g^{m-1}}{(m-1)!}-\frac{\langle \tr_gR^{\Lambda^{p}H^{-1}}\eta_\alpha,\eta_\alpha\rangle_H}{|\eta_\alpha|^2_H}\cdot\frac{\w_g^m}{m!}\\
			&\geq\frak{r}_{p,g}(E,H)\cdot\frac{\w_g^m}{m!},
		\end{split}
	\end{equation}
	where the last inequality follows from that the smallest eigenvalue of $\tr_gR^{\Lambda^{p}H^{-1}}$ equals to $-\frak{r}_{k,g}(E,H)$. Integrate \eqref{equa-inequality} over $X$, one has $$\int_M \frak{r}_{p,g}(E,H)\cdot\frac{\w_g^m}{m!}\leq 0,$$
	which contradicts to \eqref{equa-condition-positivity}. The proof is completed.
	\end{proof}
As a direct consequence, we conclude

\begin{prop}\label{coro-rationaldimension}
	Let $X$ be a compact K\"ahler manifold of real dimension $2m$ such that the tangent bundle is total mean curvature   $(m-k)$-positive for some $1\leq k\leq m$. Then for every $k\leq p\leq m$ and any pseudo-effective line bundle $L$,
    $$H^0(X,\Omega^{p,0}X\otimes L^{-1})=0$$
    and thus $\rd(X)\geq k$. In particular, $X$ is rationally connected when $k=1$.
\end{prop}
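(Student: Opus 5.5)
The plan is to apply Lemma \ref{prop-vanishing} with $E=T^{1,0}X$, and then to read off the rational dimension from the vanishing via the criterion recalled in the appendix.

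First I would identify the bundles. For $E=T^{1,0}X$ we have $E^{-1}=E^*=\Lambda^{1,0}X$, hence $\Lambda^pE^{-1}\otimes L^{-1}=\Omega^{p,0}X\otimes L^{-1}$. The total mean curvature $(m-k)$-positivity of $T^{1,0}X$ should be exactly the hypothesis of Lemma \ref{prop-vanishing} with its index shifted accordingly. Since Lemma \ref{prop-vanishing} assumes $(k'-1)$-positivity and gives vanishing for $k'\le p\le m$, I would match indices by checking the appendix definition of $\frak{r}_{p,g}(E,H)$ as a partial sum of eigenvalues of the mean curvature. I would also check monotonicity: positivity of $\frak{r}_{p}$ for one index should propagate to larger $p$, by comparing averages of eigenvalues.

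The key computation is the pointwise statement used in \eqref{equa-inequality}. The smallest eigenvalue of $-\tr_g R^{\Lambda^pE^*}$ equals the sum of the $p$ smallest eigenvalues of $\tr_g R^E$, because curvature acts on $\Lambda^pE^*$ as a derivation with sign reversed. Integrating against $\w_g^m/m!$ for a Gauduchon $g$ kills $\im\pp\log|s|^2\wedge\w_g^{m-1}$. The plurisubharmonic weight of $h_L$ contributes nonnegatively. This yields $s=0$ for every $p$ in the range, which gives $H^0(X,\Omega^{p,0}X\otimes L^{-1})=0$. I expect the index bookkeeping between "$(m-k)$-positive" and the range $k\le p\le m$ to be the main obstacle. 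I would handle it by expressing $\frak{r}_p$ of the cotangent side in terms of the top eigenvalues of the tangent side, using $\tr$ of the full curvature minus complementary sums.

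For the last assertion I would invoke the characterization of rational dimension in the appendix (Campana--Demailly--Peternell type). That characterization says $\rd(X)\ge k$ iff $H^0(X,\Omega^{p,0}X\otimes L^{-1})=0$ for all $p\ge k$ and all pseudo-effective $L$, so the vanishing gives $\rd(X)\ge k$. When $k=1$ we get $\rd(X)\ge 1$ for all $p\ge1$ with $L$ arbitrary pseudo-effective. Equivalently, $\rd(X)=m$, and Campana--Demailly--Peternell's criterion shows $X$ is rationally connected. Here the vanishing for $p=m$, i.e.\ $H^0(K_X\otimes L^{-1})=0$, is needed for non-uniruledness exclusion via Boucksom--Demailly--P\u{a}un--Peternell.
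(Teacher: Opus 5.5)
Your first half matches the paper. You apply Lemma \ref{prop-vanishing} with $E=T^{1,0}X$, so that $\Lambda^pE^{*}\otimes L^{-1}=\Omega^{p,0}X\otimes L^{-1}$. Your monotonicity remark ($\mathfrak{r}_p/p\ge\mathfrak{r}_{p'}/p'$ pointwise for $p\ge p'$, hence also after integration) is exactly what the Lemma's proof uses tacitly.

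However, your plan to ``handle'' the index bookkeeping cannot work. Total $(m-k)$-positivity only gives $\int_X\mathfrak{r}_{m-k+1,g}\,\omega_g^m/m!>0$, so the Lemma yields vanishing precisely for $m-k+1\le p\le m$. No rewriting of $\mathfrak{r}_p$ as the trace minus the top eigenvalues can extend this to $k\le p<m-k+1$, because that extension is false. Take $m=2$, $k=1$ and $X=\mathbb{P}^1\times E$ ($E$ an elliptic curve) with the product metric: then $\mathfrak{r}_{2}>0$, yet $H^0(X,\Omega^{1,0}X)\neq0$ and $X$ is not rationally connected. The printed range $k\le p\le m$ and the phrase ``rationally connected when $k=1$'' are index slips. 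The reading consistent with the Lemma, with Corollary \ref{coro-characterization}, with the paper's own argument and with its use in Theorem \ref{rationally} (where $\int_M r_1>0$ is total $0$-positivity, i.e.\ $k=m$) is: vanishing for $m-k+1\le p\le m$, hence $\rd(X)\ge k$, and rational connectedness for $k=m$.

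The genuine gap is the step from vanishing to $\rd(X)\ge k$. You cite the characterization in the appendix, but Corollary \ref{coro-characterization} is deduced from Theorem \ref{prop-characterization-rationalconnectedness}. That theorem's proof explicitly builds on the present Proposition for exactly the direction you need, so the argument is circular. You also quote it with the range $p\ge k$, and in that form it is false: again $\rd(\mathbb{P}^1\times E)=1$ while $H^0(\Omega^{1,0})\neq0$. This is why your $k=1$ step ``$\rd(X)\ge 1$, equivalently $\rd(X)=m$'' does not hold together.

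What has to be supplied is the MRC argument the paper gives:
\begin{enumerate}
\item Let $f:X\dashrightarrow Y$ be an MRC fibration whose indeterminacy locus $Z$ has codimension at least $2$. If $\rd(X)\le k-1$, then $p:=\dim Y\ge m-k+1$.
\item By Graber--Harris--Starr, rational curves through general points of $Y$ would lift to horizontal rational curves on $X$, so $Y$ is not uniruled.
\item By Ou's K\"ahler version of the BDPP theorem, $K_Y$ is pseudo-effective.
\item The pullback $(f|_{X\setminus Z})^*K_Y$ extends to a pseudo-effective invertible subsheaf $L\subset\Omega^{p,0}X$. This gives a nonzero section of $\Omega^{p,0}X\otimes L^{-1}$, contradicting the vanishing at this $p$.
\end{enumerate}
So BDPP/Ou is applied to the base $Y$, and the contradiction uses the vanishing at $p=\dim Y$. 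The vanishing for $p=m$ on $X$ only gives uniruledness. Only in the rationally connected case ($k=m$, vanishing for all $1\le p\le m$) could you instead simply invoke the Campana--Demailly--Peternell criterion.
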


\begin{proof}
	Vanishing results directly follows from Proposition \ref{prop-vanishing}. Let $f:X\dashrightarrow Y$ be a MRC fibration of $X$ and denote the indeterminacy locus of $f$ by $Z$. We argue by contradiction. If $\rd(X)\leq k-1$, then $p:=\dim Y\geq n-k+1$. By \cite{GHS}, any rational curves in $Y$ can be lifted into $X$ and thus $Y$ is not uniruled by the above (2). Then it follows from Ou's recent result \cite{ou25} that $K_Y$ is pseudo-effective. Applying Grauert-Remmert's extension theorem \cite{GR56}, we conclude that $f|_{X\setminus Z}^*K_Y$ extends to a pseudo-effective invertible subsheaf $\mF\subset \Lambda^{p,0}X$ since $\mathrm{codim}_XZ\geq 2$, which corresponds to a holomorphic section of $\Lambda^{p,0}X\otimes L^{-1}$ and we derive a contradiction.
\end{proof}

We can now prove Th. \ref{rationally}.
\begin{proof}
If $\gamma\Delta+r_1:L^2(M,h)\rightarrow L^2(M,h)$ has entirely positive spectrum for some $\gamma\in [0,\frac{4}{2m-1})$ then $M$ is compact, see  \cite[Cor. 1]{AntoXu}. Now, by Lemma \ref{totalp-spectralp}, we know that $\int_Mr_1\dvol_g>0$ and thus we can conclude that $M$ is rationally connected by Prop. \ref{coro-rationaldimension}. Analogously, if $\gamma\Delta+r_1:L^2(M,h)\rightarrow L^2(M,h)$ has entirely positive spectrum for some $\gamma\in [0, +\infty)$ then $\int_Mr_1\dvol_g>0$ and the conclusion follows again by Prop. \ref{coro-rationaldimension}. 
\end{proof}

\begin{cor}
\label{simply-proj}
Let $(M,J,h)$ be a K\"ahler manifold that satisfies the assumptions of Th. \ref{rationally}. Then $M$ is simply connected, projective and $h^{p,0}(M)=\{0\}$ for each $p=1,...,m$.
\end{cor}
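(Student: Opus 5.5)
By Th. \ref{rationally}, $M$ is compact and rationally connected. The corollary then follows from three classical properties of compact rationally connected K\"ahler manifolds, which I will derive in turn.

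\emph{Vanishing of $h^{p,0}$.} Here I would avoid going through the rational curves and instead use the vanishing already proved. Apply Prop. \ref{coro-rationaldimension} with $k=1$ and $L=\mathcal{O}_M$, which is trivially pseudo-effective. This gives $H^0(M,\Omega^{p,0}M)=0$ for every $1\leq p\leq m$. The hypothesis of that proposition is satisfied by the proof of Th. \ref{rationally}: $\int_M r_1\dvol_h>0$ is exactly total mean curvature positivity of $TM$ with respect to the K\"ahler metric $h$ (K\"ahler metrics are Gauduchon). Hence $h^{p,0}(M)=0$ for all $p\geq 1$.

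\emph{Projectivity.} Since $h^{2,0}(M)=0$, Hodge symmetry gives $H^2(M,\mathbb{C})=H^{1,1}(M)$. The K\"ahler cone is a nonempty open cone in $H^{1,1}(M,\mathbb{R})=H^2(M,\mathbb{R})$, and $H^2(M,\mathbb{Q})$ is dense in $H^2(M,\mathbb{R})$. So the K\"ahler cone contains a rational class. A multiple of it is integral and is represented by a positive $(1,1)$-form. By Kodaira's embedding theorem $M$ is projective.

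\emph{Simple connectedness.} This is the main obstacle, and I would follow the classical argument of Campana and Koll\'ar--Miyaoka--Mori. Let $\widetilde M\to M$ be a finite \'etale cover of degree $d$. Then $\widetilde M$ is again rationally connected: rational curves lift because $\mathbb{P}^1$ is simply connected, or alternatively because $TM$ pulls back and so the positivity assumption pulls back. Rational connectedness plus $h^{p,0}=0$ gives $\chi(\mathcal{O})=1$ for both $M$ and $\widetilde M$. Multiplicativity of the holomorphic Euler characteristic under \'etale covers gives $1=\chi(\widetilde M,\mathcal{O})=d\,\chi(M,\mathcal{O})=d$. Hence $\pi_1(M)$ has no proper finite-index subgroups.

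To conclude that $\pi_1(M)$ is trivial, I would use Campana's theorem (also due to Koll\'ar--Miyaoka--Mori) that a rationally connected manifold has finite fundamental group. The underlying reason is that a general very free rational curve, or a chain of rational curves through a general point, induces a surjection onto a finite quotient. Alternatively, I would cite the Koll\'ar--Miyaoka--Mori result that rationally connected varieties are simply connected. Combining finiteness of $\pi_1(M)$ with the covering argument, the universal cover is a finite \'etale cover of degree $|\pi_1(M)|$, which must then equal $1$. Therefore $M$ is simply connected.
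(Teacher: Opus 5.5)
Your proposal is correct and follows the same route as the paper. You get the vanishing of $h^{p,0}$ from Prop.~\ref{coro-rationaldimension} (i.e.\ from $\int_M r_1\dvol_h>0$), projectivity from $h^{2,0}=0$ via Kodaira, and simple connectedness from rational connectedness of the projective manifold $M$. The paper simply cites Kodaira's theorem and Debarre's corollary for the last two steps, whereas you unpack them into their standard proofs: density of rational classes in the K\"ahler cone, and finiteness of $\pi_1$ combined with multiplicativity of $\chi(\mathcal{O})$ under finite \'etale covers.
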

\begin{proof}
It is well known that all the above properties follow from the fact that $M$ is rationally connected. More precisely we know that $H^{p,0}_{\overline{\partial}}(M)=\{0\}$ for each $p>0$, see e.g. Prop. \ref{coro-rationaldimension}. Since $M$ is compact and K\"ahler with $h^{p,0}(M)=0$ we know that $M$ is projective, see \cite[Th. 8.3]{Kodaira}. Finally, given that $M$ is projective and rationally connected we know that it is simply connected, see \cite[Cor. 4.29]{De2001}.
\end{proof}

We have now an immediate consequence of Th. \ref{rationally}:

\begin{cor} In the setting of Th. \ref{rationally}, let $f:M\rightarrow M$ be an arbitrarily fixed holomorphic map. Then $$\{x\in M : f(x)=x\}\neq \emptyset.$$
\end{cor}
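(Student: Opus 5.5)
The plan is to use the holomorphic Lefschetz fixed point theorem together with the vanishing of the Hodge numbers $h^{0,q}(M)$ given by Cor. \ref{simply-proj}. The fixed point set is $\{x\in M: f(x)=x\}$ and $f$ is a holomorphic self-map of the compact complex manifold $M$. First we compute the holomorphic Lefschetz number
$$L(f,\mathcal{O}_M)=\sum_{q=0}^m(-1)^q\tr\big(f^*:H^{0,q}_{\overline{\partial}}(M)\rightarrow H^{0,q}_{\overline{\partial}}(M)\big).$$
By Cor. \ref{simply-proj}, $M$ is compact K\"ahler with $h^{p,0}(M)=0$ for $p=1,\dots,m$. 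Hodge symmetry $h^{0,q}=h^{q,0}$ then gives $H^{0,q}_{\overline{\partial}}(M)=\{0\}$ for $q\geq 1$.

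Next we note that $H^{0,0}_{\overline{\partial}}(M)\cong\mathbb{C}$ consists of the constant functions. Since $M$ is compact, $f^*$ acts on this space as the identity. Hence $L(f,\mathcal{O}_M)=1$.

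Finally we argue by contradiction. Suppose $f$ has no fixed points. The holomorphic Lefschetz fixed point theorem of Atiyah--Bott expresses $L(f,\mathcal{O}_M)$ as a sum of local contributions over the fixed points. This requires nondegenerate fixed points, but with an empty fixed point set the hypothesis holds vacuously and the sum is zero. This contradicts $L(f,\mathcal{O}_M)=1$, so the fixed point set is nonempty.

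An alternative route uses only topology. By Cor. \ref{simply-proj}, $M$ is rationally connected and hence simply connected. One could then try the ordinary Lefschetz fixed point theorem, but computing the topological Lefschetz number $\sum(-1)^k\tr(f^*|H^k(M,\mathbb{Q}))$ needs control of all of $H^{p,q}$, which we do not have. So the holomorphic version is the right tool.

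The main obstacle is justifying that the holomorphic Lefschetz theorem applies when the fixed point set is empty. I would cite Atiyah--Bott, where the elliptic complex is the Dolbeault complex twisted by $f$ and an empty fixed set gives Lefschetz number zero. The remaining steps are direct consequences of Cor. \ref{simply-proj}.
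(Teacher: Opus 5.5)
Your proposal is correct and follows essentially the same route as the paper. The vanishing $H^{0,q}_{\overline{\partial}}(M)=\{0\}$ for $q\geq 1$ (from Cor. \ref{simply-proj}, with the Hodge symmetry step you spell out and the paper leaves implicit) gives holomorphic Lefschetz number $1$, and the holomorphic Lefschetz fixed point theorem (the paper cites Roe, Prop. 10.8) then forces a fixed point.
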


\begin{proof}
Let us consider the Dolbeault complex $$0\rightarrow C^{\infty}(M)\stackrel{\overline{\partial}_{0,0}}{\rightarrow}\Omega^{0,1}(M)\stackrel{\overline{\partial}_{0,1}}{\rightarrow}...\stackrel{\overline{\partial}_{0,m-1}}{\rightarrow}\Omega^{0,m}(M)\rightarrow 0$$
Then, by Cor. \ref{simply-proj}, we have $H^{0,q}_{\overline{\partial}}(M)=\{0\}$ and consequently $$L_{\overline{\partial}}(f):=\sum_{q=0}^m(-1)^q\mathrm{Tr}\left(f^*: H^{0,q}_{\overline{\partial}}(M)\rightarrow H^{0,q}_{\overline{\partial}}(M)\right)=1.$$
The conclusion now follows from the holomorphic Lefschetz fixed point theorem, see e.g. \cite[Prop. 10.8]{Roe}.
\end{proof}

If we replace the positivity of the spectrum with the non-negativity and we compensate this by requiring a smaller constant in front of the Laplace-Beltrami operator we still have interesting geometric applications. More precisely:

\begin{teo}
\label{finiteness}
Let $(M,J,h)$ be a compact K\"ahler manifold of real dimension $2m$ such that for some $\gamma\in(0,2)$ the operator $$\gamma\Delta+\frak{r}_p:L^2(M,h)\rightarrow L^2(M,h)$$ has entirely non-negative spectrum for each $p=1,...,m$. Then $M$ satisfies at least one of the following two properties:
\begin{enumerate}
\item $\pi_1(M)$ is finite,
\item  $\chi(M,\mathcal{O}_M)= 0$.
\end{enumerate}
\end{teo}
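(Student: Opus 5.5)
We argue by contradiction: suppose $\pi_1(M)$ is infinite and show that then $\chi(M,\mathcal{O}_M)=0$. Let $\pi:\tilde M\rightarrow M$ be the universal covering, with the pulled back metric $\tilde h$. Since $\pi_1(M)$ is infinite, $\tilde M$ is a non-compact Galois covering of a compact manifold. Atiyah's $L^2$-index theorem applied to the Dolbeault complex (the rolled-up operator $\overline{\partial}+\overline{\partial}^t$ on $\Omega^{0,\bullet}$) gives
$$\chi(M,\mathcal{O}_M)=\sum_{q=0}^m(-1)^q h^{0,q}_{(2)}(\tilde M),$$
where $h^{0,q}_{(2)}$ is the $\Gamma$-dimension of the space of $L^2$-harmonic $(0,q)$-forms on $\tilde M$. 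By Hodge symmetry (complex conjugation together with the K\"ahler identities, which also hold on $\tilde M$ at the $L^2$ level), $h^{0,q}_{(2)}(\tilde M)=h^{q,0}_{(2)}(\tilde M)$. It therefore suffices to show that every $L^2$-holomorphic $p$-form on $\tilde M$ vanishes for $p=0,\dots,m$. For $p=0$ this is immediate: an $L^2$ harmonic function on a complete manifold of infinite volume is zero. It remains to treat $1\leq p\leq m$.

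Let $\theta$ be an $L^2$ harmonic $(p,0)$-form on $\tilde M$. The Weitzenb\"ock formula $2\Delta_{\overline{\partial},p,0}=\nabla^t\nabla+\mathrm{ric}^{p,0}$, combined with $h(\mathrm{ric}^{p,0}\theta,\theta)\geq \frak{r}_p|\theta|^2$, yields on $\{\theta\neq0\}$ the refined inequality
$$|\theta|\Delta|\theta|+\frak{r}_p|\theta|^2\leq |\nabla\theta|^2-|d|\theta||^2\leq \left(1-\tfrac{1}{2}\right)\cdots,$$
where we use the refined Kato inequality for holomorphic forms: $|\nabla\theta|^2\geq 2|d|\theta||^2$, coming from the fact that $\nabla\theta$ has only a $(1,0)$-component. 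Setting $u=|\theta|$, this gives the weak inequality
$$\tfrac12\Delta u^2 + |du|^2 + \frak{r}_p u^2 \leq \tfrac12|\nabla\theta|^2 \quad\text{and hence}\quad u\Delta u+\frak{r}_pu^2\leq -\tfrac12\cdot\text{(Kato gain)}\ |du|^2 .$$
The precise constant to extract is $\Delta u\leq -\frak r_p u - c\,|du|^2/u$ with Kato constant yielding threshold $\gamma<2$.

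Next we lift the spectral hypothesis. Since $\gamma\Delta+\frak{r}_p$ is non-negative on the compact $M$, there is a positive first eigenfunction $\phi$ with $\gamma\Delta\phi+\frak{r}_p\phi=\sigma_1\phi$, $\sigma_1\geq0$. Its pullback $\tilde\phi$ is a positive function on $\tilde M$ that is bounded above and below. By the standard Agmon/Fischer-Colbrie--Schoen argument, $\gamma\Delta+\frak{r}_p\geq0$ in the quadratic form sense on $C_c^\infty(\tilde M)$. Now we test this with $\chi u^{\alpha}$, where $\chi$ is a cutoff. Combining the test with the Bochner inequality above and choosing $\alpha$ suitably, the refined Kato constant makes the gradient terms have a favorable sign exactly when $\gamma<2$. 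Letting the cutoff exhaust $\tilde M$, with error terms controlled by $\|\theta\|_{L^2}$ and the completeness of $\tilde h$, we get $\int|d u|^2\cdot\epsilon\leq0$. Hence $u$ is constant, and then $u=0$ because $\mathrm{vol}(\tilde M)=\infty$ while $u\in L^2$.

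The main obstacle is this last step: tuning the exponent $\alpha$ and the Kato constant so that the admissible range is exactly $\gamma\in(0,2)$, and controlling the cutoff errors. A clean alternative is to work directly with $\psi=u/\tilde\phi$ and integrate by parts against $\tilde\phi^2$. With all $L^2$ holomorphic forms vanishing, the index formula gives $\chi(M,\mathcal{O}_M)=0$.
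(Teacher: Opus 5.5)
Your overall architecture is the paper's. You pass to the universal cover and lift the non-negativity of $\gamma\Delta+\mathfrak{r}_p$; your positive ground-state argument is exactly the Fischer-Colbrie--Schoen/pushing-down step the paper cites. You then kill $L^2$-harmonic $(p,0)$-forms with the Weitzenb\"ock formula and the refined Kato inequality $|\nabla\theta|^2\geq 2|d|\theta||^2$, use infinite volume both for $p=0$ and for forms of constant length, pass to $(0,q)$-forms by conjugation, and conclude with Atiyah's $L^2$-index theorem.

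The gap is that the one step where $\gamma<2$ is actually used is never carried out, and the inequalities you do display are useless. With the paper's non-negative Laplacian, on $\{\theta\neq0\}$ and with $u=|\theta|$, one has
$$u\Delta u+\mathfrak{r}_pu^2\leq |du|^2-|\nabla\theta|^2\leq -|du|^2 .$$
Your right-hand sides $|\nabla\theta|^2-|du|^2$ and $\tfrac12|\nabla\theta|^2$ are non-negative and carry no information. The ``Kato gain'', the constant $c$ and the exponent $\alpha$ are left undetermined, and the final estimate $\int|du|^2\cdot\epsilon\leq0$ is asserted rather than derived. As written, nothing is proved for $1\leq p\leq m$.

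The fix needs no tuning: take $\alpha=1$. The paper argues in integrated form, which avoids both cutoffs and the zero set of $\theta$. Since $\tilde M$ covers a compact manifold, an $L^2$-harmonic form satisfies $\nabla\theta\in L^2$ (Dodziuk). Hence $u\in W^{1,2}(\tilde M,\tilde h)$, which is the form domain of the lifted operator, because $C^\infty_c$ is dense in $W^{1,2}$ on a complete manifold. Therefore
$$0=\langle 2\tilde\Delta_{\overline{\partial},p,0}\theta,\theta\rangle=\int_{\tilde M}\big(|\nabla\theta|^2+\tilde h(\mathrm{ric}^{p,0}\theta,\theta)\big)\dvol_{\tilde h}\geq\int_{\tilde M}\big(2|du|^2+\tilde{\mathfrak{r}}_pu^2\big)\dvol_{\tilde h}\geq(2-\gamma)\int_{\tilde M}|du|^2\dvol_{\tilde h}.$$

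Your cutoff route also works. Integrate the Bochner identity against $\chi^2$, using $\mathrm{Re}\langle\nabla\theta,d\chi\otimes\theta\rangle=u\langle d\chi,du\rangle$, and test the lifted form inequality with $\chi u$. This gives
$$(2-\gamma)\int\chi^2|du|^2\leq 2(\gamma-1)\int\chi u\langle d\chi,du\rangle+\gamma\int u^2|d\chi|^2.$$
Absorbing the cross term yields $\int\chi^2|du|^2\leq C_\gamma R^{-2}\|\theta\|^2_{L^2}$ when $|d\chi|\leq R^{-1}$.

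Other exponents do not help. The same computation with $u^\alpha$ gives the threshold $\gamma\alpha<2$, so $\alpha>1$ shrinks the admissible range. For $\alpha<1$, the cutoff errors $\int u^{2\alpha}|d\chi|^2$ are in general not controlled by $\|\theta\|_{L^2}$.
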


\begin{proof}
In order to prove  this theorem it is enough to prove that if $\pi_1(M)$ is infinite then $\chi(M,\mathcal{O}_M)=0$. So, let us assume that $\pi_1(M)$ is infinite. Let $\pi:\tilde{M}\rightarrow M$ be the universal covering of $M$ and let us consider the operator $$\gamma\tilde{\Delta}+\tilde{\frak{r}}_p:C^{\infty}_c(\tilde{M})\rightarrow C_c^{\infty}(\tilde{M})$$ with $\tilde{\frak{r}}_p:=\frak{r}_p\circ\pi$ and $p=1,...,m$. Since $\tilde{\Delta}:C^{\infty}_c(\tilde{M})\rightarrow C_c^{\infty}(\tilde{M})$  is essentially self-adjoint when viewed as an unbounded and densely defined operator acting on $L^2(\tilde{M},\tilde{h})$ and given that $\tilde{\frak{r}}_p$ is a bounded real-valued function on $\tilde{M}$, we can conclude that $\gamma\tilde{\Delta}+\tilde{\frak{r}}_p:C^{\infty}_c(\tilde{M})\rightarrow C_c^{\infty}(\tilde{M})$ is also essentially self-adjoint in $L^2(\tilde{M},\tilde{h})$ for each $1\leq p\leq m$. With a little abuse of notation let us denote with 
\begin{equation}
\label{lift}
\gamma\tilde{\Delta}+\tilde{\frak{r}}_p:L^2(\tilde{M},\tilde{h})\rightarrow L^2(\tilde{M},\tilde{h})
\end{equation}
 the $L^2$-closure of  $\gamma\tilde{\Delta}+\tilde{\frak{r}}_p:C^{\infty}_c(\tilde{M})\rightarrow C_c^{\infty}(\tilde{M})$. Since $\tilde{\frak{r}}_p$ is bounded it is also easy to deduce that the domain of \eqref{lift} equals the domain of the $L^2$-closure of the Laplace-Beltrami operator $\tilde{\Delta}$ and that the corresponding graph norms are equivalent. Note that using the pushing-down technique, see \cite[\S 4]{BMP}, or alternatively using \cite[Th. 1]{FCS} or \cite[Lemma 3.10]{PRS},  we can conclude that \eqref{lift} has still non-negative spectrum.  Let us now denote with $\tilde{\Delta}_{\overline{\partial},p,q}:\Omega^{p,q}(\tilde{M})\rightarrow \Omega^{p,q}(\tilde{M})$ the Hodge-Kodaira Laplacian on $\tilde{M}$ and let 
\begin{equation}
\label{L2closure}
\tilde{\Delta}_{\overline{\partial},p,q}:L^2\Omega^{p,q}(\tilde{M},\tilde{h})\rightarrow L^2\Omega^{p,q}(\tilde{M},\tilde{h})
\end{equation}
 be the $L^2$-closure of $\tilde{\Delta}_{\overline{\partial},p,q}:\Omega_c^{p,q}(\tilde{M})\rightarrow L^2\Omega^{p,q}(\tilde{M},\tilde{h})$. We denote with $\mathcal{H}^{p,q}_{2,\overline{\partial}}(\tilde{M},\tilde{h})$ the kernel of \eqref{L2closure}. Let now $\psi\in \mathcal{H}^{p,0}_{2,\overline{\partial}}(\tilde{M},\tilde{h})$ and let $W^{1,2}(\tilde{M},\Lambda^{p,0}(\tilde{M}))$ be the Sobolev space of $L^2$ $(p,0)$-forms $\eta$ such that $\nabla \eta\in L^2(\tilde{M},T^*\tilde{M}\otimes  \Lambda^{p,0}(\tilde{M}), \tilde{h})$. Since $M$ is compact and $(\tilde{M},\tilde{h})$  is the universal covering of $M$ endowed with the pull-back metric, we know that $\psi\in W^{1,2}(\tilde{M},\Lambda^{p,0}(\tilde{M}))$, see \cite[Th.2]{Dod}. Consequently $|\psi|_h\in W^{1,2}(\tilde{M},\tilde{h})$, see e.g. \cite[Prop. 3.1]{BeiSob} where $W^{1,2}(\tilde{M},\tilde{h})$ denotes the Sobolev space of $L^2$ functions on $(\tilde{M},\tilde{h})$ with $L^2$ gradient. Hence, using the refined Kato inequality \cite[Cor. 4.3]{Cibo}, we obtain
$$
\begin{aligned}
0&=\langle 2\tilde{\Delta}_{\overline{\partial},p,0}\psi,\psi\rangle_{L^2\Omega^{p,0}(\tilde{M},\tilde{h})}\\
&=\int_{\tilde{M}}\left(|\tilde{\nabla} \psi|^2_{\tilde{h}}+\tilde{h}(\tilde{\mathrm{ric}}^{p,0}\psi,\psi)\right)\dvol_{\tilde{h}}\\
&\geq \int_{\tilde{M}}\left(2|\tilde{d}_0|\psi|_{\tilde{h}}|^2_{\tilde{h}}+\tilde{\frak{r}}_p|\psi|^2_{\tilde{h}}\right)\dvol_{\tilde{h}}\\
&\geq \int_{\tilde{M}}\left((2-\gamma)|\tilde{d}_0|\psi|_{\tilde{h}}|^2_{\tilde{h}}+\gamma|\tilde{d}_0|\psi|_{\tilde{h}}|^2_{\tilde{h}}+\tilde{\frak{r}}_p|\psi|^2_{\tilde{h}}\right)\dvol_{\tilde{h}}\\
&\geq \int_{\tilde{M}}(2-\gamma)|\tilde{d}_0|\psi|_{\tilde{h}}|^2_{\tilde{h}}\dvol_{\tilde{h}}\\
&\geq 0.
\end{aligned}
$$ We can thus conclude that $|\psi|_{\tilde{h}}$ is a constant function on $\tilde{M}$ and since  $|\psi|_{\tilde{h}}\in L^2(\tilde{M},\tilde{h})$ and $(\tilde{M},\tilde{h})$ has infinite volume, we can deduce that $\psi\equiv 0$. This shows that $\mathcal{H}^{p,0}_{2,\overline{\partial}}(\tilde{M},\tilde{h})=\{0\}$ for each $p=1,...,m$. Moreover, we also know that  $\mathcal{H}^{0,0}_{2,\overline{\partial}}(\tilde{M},\tilde{h})=\{0\}$, since $\mathcal{H}^{0,0}_{2,\overline{\partial}}(\tilde{M},\tilde{h})$ coincides with the space of $L^2$-harmonic functions on $(\tilde{M},\tilde{h})$, which is trivial given that $(\tilde{M},\tilde{h})$ is complete and has infinite volume. 
Now, using the conjugation and the Hodge star operator, we obtain that 
\begin{equation}
\label{cucu}
\mathcal{H}^{0,q}_{2,\overline{\partial}}(\tilde{M},\tilde{h})=\{0\}
\end{equation} for each $q=0,...,m$. According to the celebrated Atiyah's $L^2$-index theorem, see  \cite[Th. 3.8]{Atiyah}, we know that 
\begin{equation}
\label{L2-A}
\chi(M,\mathcal{O}_M)=\sum_{k=0}^m(-1)^k\dim_{\pi_1(M)}\left(\mathcal{H}^{0,q}_{2,\overline{\partial}}(\tilde{M},\tilde{h})\right).
\end{equation}
The above expression, that is $\dim_{\pi_1(M)}\left(\mathcal{H}^{0,q}_{2,\overline{\partial}}(\tilde{M},\tilde{h})\right)$, denotes the Von Neumann dimension of $\mathcal{H}^{0,q}_{2,\overline{\partial}}(\tilde{M},\tilde{h})$ with respect to $\pi_1(M)$. For the purpose of this proof it is not necessary to recall the definition of $\dim_{\pi_1(M)}\left(\mathcal{H}^{0,q}_{2,\overline{\partial}}(\tilde{M},\tilde{h})\right)$ for which we refer to \cite{Atiyah}. It is only enough to keep in mind that  $\dim_{\pi_1(M)}\left(\mathcal{H}^{0,q}_{2,\overline{\partial}}(\tilde{M},\tilde{h})\right)$ is a non-negative real number such that 
\begin{equation}
\label{L2v}
\dim_{\pi_1(M)}\left(\mathcal{H}^{0,q}_{2,\overline{\partial}}(\tilde{M},\tilde{h})\right)=0\quad \mathrm{if\ and\ only\ if}\quad  \mathcal{H}^{0,q}_{2,\overline{\partial}}(\tilde{M},\tilde{h})=\{0\}.
\end{equation}
Finally, by \eqref{cucu}, \eqref{L2-A} and \eqref{L2v}, we can conclude that $\chi(M,\mathcal{O}_M)=0$, as required.
\end{proof}

We have now the following application:

\begin{cor}
In the setting of Th. \ref{finiteness} every holomorphic $p$-form on $M$ has constant length and consequently $$h^{p,0}(M)\leq \frac{m!}{p!(m-p)!}$$ for each $p=1,...,m$.
\end{cor}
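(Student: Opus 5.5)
The plan is to repeat the Bochner computation from the proof of Th. \ref{finiteness}, this time directly on the compact manifold $M$, and then use that a holomorphic form is determined by its value at one point. Fix $p\in\{1,\dots,m\}$ and a holomorphic $p$-form $\psi\in H^0(M,\Omega^{p,0})$. Since $M$ is compact and K\"ahler, $\psi$ is $\overline{\partial}$-closed and $\overline{\partial}^t\psi=0$ trivially, because there are no $(p,-1)$-forms. Hence $\Delta_{\overline{\partial},p,0}\psi=0$. By the Weitzenb\"ock formula $2\Delta_{\overline{\partial},p,0}=\nabla^t\nabla+\mathrm{ric}^{p,0}$, the refined Kato inequality $|\nabla\psi|^2\geq 2|d|\psi||^2$ (the same one used in the proof of Th. \ref{finiteness}), and the pointwise bound $h(\mathrm{ric}^{p,0}\psi,\psi)\geq \frak{r}_p|\psi|^2$, we get
$$0\geq\int_M\left(2|d|\psi||^2+\frak{r}_p|\psi|^2\right)\dvol_h.$$
Here $|\psi|$ is Lipschitz, so it lies in $W^{1,2}(M,h)$, which is the form domain of $\gamma\Delta+\frak{r}_p$. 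Non-negativity of the spectrum of that operator gives $\int_M(\gamma|d|\psi||^2+\frak{r}_p|\psi|^2)\geq 0$. Subtracting, we obtain $(2-\gamma)\int_M|d|\psi||^2\leq 0$. Since $\gamma<2$, this forces $d|\psi|=0$, so $|\psi|_h$ is constant. This is the same chain of inequalities as before, with the covering argument removed.

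For the dimension bound, fix a point $x_0\in M$ and consider the linear evaluation map
$$\mathrm{ev}_{x_0}:H^0(M,\Omega^{p,0})\to\Lambda^{p,0}_{x_0}M,$$
whose target has complex dimension $\binom{m}{p}=\frac{m!}{p!(m-p)!}$. If $\mathrm{ev}_{x_0}(\psi)=0$, then $|\psi|(x_0)=0$. The constant-length property applies to every holomorphic $p$-form, so it applies to $\psi$, and hence $|\psi|\equiv 0$ and $\psi=0$. Therefore $\mathrm{ev}_{x_0}$ is injective, which gives $h^{p,0}(M)\leq\binom{m}{p}$.

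The main point needing care is the justification of the Kato step on the compact manifold. Rather than working with $|\psi|$ directly, one can apply the spectral inequality to $f_\epsilon=\sqrt{|\psi|^2+\epsilon^2}$, which is smooth, and let $\epsilon\to 0$. Alternatively, one notes that the refined Kato inequality of \cite{Cibo} holds in $W^{1,2}$ for sections in the kernel of the operator $\overline{\partial}$, exactly as invoked in Th. \ref{finiteness}. Everything else is a direct transcription of that proof.
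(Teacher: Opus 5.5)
Your proposal is correct and follows the paper's own proof. The paper runs the same Weitzenb\"ock--refined-Kato chain directly on $M$, using $\gamma<2$ to get $d|\psi|_h=0$. For the bound, it argues that a nontrivial combination of more than $\frac{m!}{p!(m-p)!}$ holomorphic $p$-forms vanishes at some point and hence identically, which is exactly your injectivity of the evaluation map at a point.
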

\begin{proof}
Let $\psi$ be a holomorphic $p$-form, with $1\leq p\leq m$. Arguing as in the proof of Th. \ref{finiteness} we know that $\psi\in W^{1,2}(M,\Lambda^{p,0}(M))$ and consequently $|\psi|_h\in W^{1,2}(M,h)$. Applying the refined Kato inequality \cite[Cor. 4.3]{Cibo}, we obtain
$$
\begin{aligned}
0&=\langle 2\Delta_{\overline{\partial},p,0}\psi,\psi\rangle_{L^2\Omega^{p,0}(M,h)}\\
&=\int_{M}\left(|\nabla \psi|^2_{h}+h(\mathrm{ric}^{p,0}\psi,\psi)\right)\dvol_{h}\\
&\geq \int_M\left(2|d_0|\psi|_{h}|^2_{h}+\frak{r}_p|\psi|^2_{h}\right)\dvol_{h}\\
&\geq \int_{M}\left((2-\gamma)|d_0|\psi|_{h}|^2_{h}+\gamma|d_0|\psi|_{h}|^2_{h}+\frak{r}_p|\psi|^2_{h}\right)\dvol_{h}\\
&\geq \int_{M}(2-\gamma)|d_0|\psi|_{h}|^2_{h}\dvol_{h}\\
&\geq 0.
\end{aligned}
$$ We can thus conclude that $|\psi|_{h}$ is a constant function on $M$. Finally let $\psi_1,...,\psi_{\ell}$ be $\ell$-holomorphic and linearly independent $p$-forms. If $\ell>\frac{m!}{p!(m-p)!}$ then there exists a point $x\in M$ and $\mu_1,...,\mu_{\ell}\in \mathbb{C}$ such that $\mu_1\psi_1(x)+...+\mu_{\ell}\psi_{\ell}(x)=0$ and $\mu_j\neq 0$ for some $j=1,...,\ell$.  Let $\psi:=\mu_1\psi_1+...+\mu_{\ell}\psi_{\ell}$. Since $\psi$ is holomorphic we know that $|\psi|_h$ is constant. Thus $\psi(x)=0$ implies that $\psi\equiv 0$. This contradicts the fact that $\psi_1,...,\psi_{\ell}$ are linearly independent. We can thus conclude that $\ell\leq \frac{m!}{p!(m-p)!}$, as required.
\end{proof}

Sufficient conditions to apply Th. \ref{finiteness} can be given entirely in terms of $r_1$, the lowest eigenvalue of the Ricci curvature \eqref{eigenricci}. Namely:

\begin{cor}
Let $(M,J,h)$ be a compact K\"ahler manifold of real dimension $2m$. If  the operator $$\gamma\Delta+mr_1:L^2(M,h)\rightarrow L^2(M,h)$$ has entirely non-negative spectrum for some $\gamma\in [0,2)$ and $\chi(\mathcal{O}_M)\neq 0$ then $\pi_1(M)$ is finite.

\end{cor}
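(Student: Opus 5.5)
The plan is to reduce the statement to Th. \ref{finiteness}. Under the assumption $\chi(\mathcal{O}_M)\neq 0$, alternative (2) of Th. \ref{finiteness} fails. So once its hypothesis is checked, alternative (1) must hold, namely $\pi_1(M)$ is finite. It therefore suffices to show that $\gamma\Delta+\frak{r}_p$ has entirely non-negative spectrum for every $p=1,\dots,m$, starting from the non-negativity of $\gamma\Delta+mr_1$.

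The difficulty is that the pointwise inequality $pr_1\leq \frak{r}_p$ from the beginning of the section compares $\frak{r}_p$ with $pr_1$, not with $mr_1$. Where $r_1<0$ we have $mr_1\leq pr_1$, but where $r_1>0$ the inequality goes the wrong way. So I will not compare potentials directly; I will rescale the operator instead. Since $M$ is compact, the spectrum is discrete, and non-negativity is equivalent to the quadratic form inequality
$$\int_M\left(\gamma|du|^2+f u^2\right)\dvol_h\geq 0$$
for all $u$ in $W^{1,2}(M,h)$, where $f$ is the potential.

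Fix $p\in\{1,\dots,m\}$. Dividing the quadratic form of $\gamma\Delta+mr_1$ by $m/p\geq 1$, I get that $\frac{p}{m}\gamma\Delta+pr_1$ is non-negative. Since $\frac{p}{m}\gamma\leq\gamma$, adding $\left(\gamma-\frac{p}{m}\gamma\right)\int_M|du|^2\dvol_h\geq 0$ shows that $\gamma\Delta+pr_1$ is non-negative. Finally, $\frak{r}_p\geq pr_1$ pointwise gives that $\gamma\Delta+\frak{r}_p$ has non-negative quadratic form, hence non-negative spectrum. This holds for the same $\gamma$ and for every $p$.

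One last point needs care: Th. \ref{finiteness} is stated for $\gamma\in(0,2)$, while here $\gamma\in[0,2)$. If $\gamma=0$, the hypothesis says $mr_1\geq 0$ pointwise, so the hypothesis also holds with any small positive $\gamma$, say $\gamma=1$, because adding $\int_M|du|^2\dvol_h$ keeps the form non-negative. Hence we may assume $\gamma\in(0,2)$ and apply Th. \ref{finiteness}, which yields that $\pi_1(M)$ is finite. The only real obstacle is the sign issue in comparing $mr_1$ with $\frak{r}_p$, and the rescaling argument above resolves it.
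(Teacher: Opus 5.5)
Your proposal is correct and follows the paper's own argument: rescale the quadratic form by $p/m$, use that increasing the coefficient of $\Delta$ preserves non-negativity to reach $\gamma\Delta+pr_1$, then use $\mathfrak{r}_p\geq pr_1$ and apply Th.~\ref{finiteness}. Your extra treatment of $\gamma=0$ correctly handles the mismatch between the hypothesis $\gamma\in[0,2)$ here and the range $\gamma\in(0,2)$ in the statement of Th.~\ref{finiteness}, which the paper does not address.
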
 
\begin{proof}
Note that $\gamma\Delta+mr_1:L^2(M,h)\rightarrow L^2(M,h)$ has entirely non-negative spectrum if and only if $\frac{\gamma}{m}\Delta+r_1:L^2(M,h)\rightarrow L^2(M,h)$ has entirely non-negative spectrum. Since $\frac{\gamma}{p}\geq \frac{\gamma}{m}$ we can deduce that if $\frac{\gamma}{m}\Delta+r_1:L^2(M,h)\rightarrow L^2(M,h)$ has entirely non-negative spectrum then also the spectrum of $\frac{\gamma}{p}\Delta+r_1:L^2(M,h)\rightarrow L^2(M,h)$ is entirely non-negative. This latter property amounts to saying that $\gamma\Delta+pr_1:L^2(M,h)\rightarrow L^2(M,h)$ has entirely non-negative spectrum and finally we can conclude that also the spectrum of $\gamma\Delta+\frak{r}_p:L^2(M,h)\rightarrow L^2(M,h)$ is entirely non-negative given that $\frak{r}_p\geq pr_1$. The conclusion follows now from Th. \ref{finiteness}.
\end{proof}

\section{The Riemannian case}
Let us now consider a compact Riemannian manifold $(M,g)$ of dimension $m$. We denote with $\nabla:C^{\infty}(M,TM)\rightarrow C^{\infty}(M,T^*M\otimes TM)$ the Levi-Civita connection and with a little abuse of notation let us denote with $\nabla:C^{\infty}(M,\Lambda^kT^*M)\rightarrow C^{\infty}(M,T^*M\otimes \Lambda^kT^*M)$ the connection induced by the Levi-Civita connection. Now let $\Delta_k:\Omega^k(M)\rightarrow \Omega^k(M)$ be the Hodge Laplacian. Then, the Weitzenb\"ock formula, see e.g. \cite{Gallot}, \cite{Poor}, tells us that $$\Delta_k=\nabla^t\circ \nabla+W_k$$ with $\nabla^t$ the formal adjoint of $\nabla:C^{\infty}(M,\Lambda^kT^*M)\rightarrow C^{\infty}(M,T^*M\otimes \Lambda^kT^*M)$ and $W_k$ an endomorphism of $\Lambda^kT^*M$ obtained from the curvature operator of $(M,g)$.
Then, for each fixed $k\in \{1,...,m-1\}$, there exist $\ell(k)$ real-valued continuous functions $w_{k,j}:M\rightarrow \mathbb{R}$, $j=1,...,\ell(k)$, with $\ell(k)=m!/(k!(m-k)!)$, such that 
$$
w_{k,1}\leq w_{k,2}\leq...\leq w_{k,\ell(k)}$$
 and for each $x\in M$ 
\begin{equation}
\label{eigenweit}
\{w_{k,1}(x),w_{k,2}(x),...,w_{k,\ell(k)}(x)\}
\end{equation}
is the set of eigenvalues of $W_{k,x}:\Lambda^kT^*_xM\rightarrow \Lambda^kT^*_xM$.   
Clearly for any $k\in \{1,...,m-1\}$ and $\eta\in \Omega^k(M)$ we have \begin{equation}
\label{triple}
g(W_k\eta,\eta)\geq g(w_{k,1}\eta,\eta).
\end{equation}
In this section we are interested in the following Schr\"odinger-type differential operators: $$\frac{m}{m-1}\Delta+w_{k,1}:C^{\infty}(M)\rightarrow C^{\infty}(M)$$ with $k=1,...,m-1$. Also in this case, given that $M$ is compact, $w_{k,1}$ is real and continuous and $\Delta$ is elliptic, we know that $\frac{m}{m-1}\Delta+w_{k,1}:C^{\infty}(M)\rightarrow L^2(M,g)$ carries a unique $L^2$-closed extension whose domain equals the domain of $\Delta:L^2(M,g)\rightarrow L^2(M,g)$, the $L^2$-closure of $\Delta:C^{\infty}(M)\rightarrow L^2(M,g)$. With little abuse of notation we denote it with
\begin{equation}
\label{schrw}
\frac{m}{m-1}\Delta+w_{k,1}:L^2(M,g)\rightarrow L^2(M,g).
\end{equation}
Moreover, again by the fact that $M$ is compact, $w_{k,1}$ is real and continuous and $\Delta$ is elliptic, we know that \eqref{schrw} has entirely discrete spectrum. We denote its eigenvalues with $$\lambda_{k,1}\leq \lambda_{k,2}\leq...\leq \lambda_{k,j}\leq...$$
We have now the first main result of this section: 
\begin{teo}
\label{simplyr}
Let $(M,g)$ be a compact Riemannian manifold of dimension $m>1$ such  that the operator 
\begin{equation}
\label{wsws}
\frac{m}{m-1}\Delta+w_{k,1}:L^2(M,g)\rightarrow L^2(M,g)
\end{equation}
 has entirely positive spectrum for each $k=1,...,m-1$. 
\begin{enumerate}
\item If $m$ is even and $M$ is orientable, then $M$ is a simply connected, real homology sphere.
\item  If $m$ is even and $M$ is non-orientable, then $M$ is a simply connected, real homology sphere with $\pi_1(M)\cong\mathbb{Z}_2$.
\item If $m$ is odd then $M$ is a real homology sphere. In particular $M$ is orientable.
\end{enumerate}
\end{teo}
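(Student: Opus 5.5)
The plan is to mimic the proof of Th. \ref{finiteness}, now using the Hodge Laplacian $\Delta_k$ and the refined Kato inequality for harmonic $k$-forms. For a harmonic $k$-form $\eta$ with $1\leq k\leq m-1$, that inequality gives $|\nabla\eta|^2\geq \frac{m-k+1}{m-k}|d|\eta||^2$ or $\frac{k+1}{k}|d|\eta||^2$, and in either case at least $\frac{m}{m-1}|d|\eta||^2$, since the worst case is $k=1$ or $k=m-1$.

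\textbf{Step 1: no harmonic $k$-forms on $M$.} Let $\eta$ be a harmonic $k$-form on $M$ with $1\leq k\leq m-1$, and set $u=|\eta|_g\in W^{1,2}(M,g)$. Combining the Weitzenb\"ock formula, \eqref{triple} and the Kato inequality gives
$$0=\langle\Delta_k\eta,\eta\rangle\geq \int_M\Big(\tfrac{m}{m-1}|du|^2+w_{k,1}u^2\Big)\dvol_g=\Big\langle\Big(\tfrac{m}{m-1}\Delta+w_{k,1}\Big)u,u\Big\rangle\geq\lambda_{k,1}\|u\|^2.$$
The last pairing is meant in the quadratic-form sense. Since $\lambda_{k,1}>0$, this forces $u=0$. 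By Hodge theory $H^k(M,\mathbb{R})=0$ for $1\leq k\leq m-1$, so $M$ is a real homology sphere when orientable.

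\textbf{Step 2: lift to finite covers.} Let $\pi:\hat M\to M$ be any finite Riemannian cover with the pulled-back metric. The lifted Schr\"odinger operator $\frac{m}{m-1}\hat\Delta+w_{k,1}\circ\pi$ still has positive spectrum. I would get this by pushing down: averaging a positive ground state over the deck group, or taking the positive first eigenfunction $\phi$ of \eqref{wsws} on $M$, whose lift is a positive function with the same eigenvalue; by the Allegretto--Piepenbrink criterion this gives bottom of spectrum $\geq\lambda_{k,1}>0$. Step 1 then applies to $\hat M$, so every finite cover is a rational homology sphere.

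\textbf{Step 3: Euler characteristic and finiteness of $\pi_1$.} When $m$ is even and $M$ is orientable, Step 1 gives $\chi(M)=2$. For $\pi_1$ I expect the main obstacle: there is no Kobayashi-type argument here, so I would pass to the universal cover $\tilde M$ as in Th. \ref{finiteness}. If $\pi_1(M)$ is infinite, then $\tilde M$ has infinite volume and the spectrum is still non-negative by \cite{BMP}, \cite{FCS}, \cite{PRS}. The Kato argument, now with Dodziuk's $W^{1,2}$ regularity, then shows that every $L^2$-harmonic $k$-form on $\tilde M$ has constant norm, hence vanishes, for $1\leq k\leq m-1$. In degrees $0$ and $m$ the $L^2$-harmonic forms vanish by infinite volume. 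Atiyah's $L^2$-index theorem then gives $\chi(M)=0$, which contradicts $\chi(M)=2$. So $\pi_1(M)$ is finite, and the universal cover is a finite cover. By Step 2 it is an orientable rational homology sphere of even dimension, so $\chi(\tilde M)=2$. Multiplicativity gives $2=\chi(\tilde M)=|\pi_1(M)|\chi(M)=2|\pi_1(M)|$, hence $\pi_1(M)=1$.

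\textbf{Step 4: the remaining cases.} If $m$ is even and $M$ is non-orientable, apply the above to the orientation double cover $\hat M$. It is orientable and satisfies the hypothesis by Step 2, so it is simply connected. Hence $\pi_1(M)\cong\mathbb{Z}_2$, and $\chi(M)=1$ is consistent with real homology vanishing in degrees $1,\dots,m-1$ (top degree $0$). If $m$ is odd, Step 1 still kills $H^k$ for $1\leq k\leq m-1$. Orientability follows because a non-orientable $M$ would have $H^m(M,\mathbb{R})=0$ and hence $\chi(M)=1$, which is impossible in odd dimension where $\chi=0$. So $M$ is orientable and a real homology sphere.
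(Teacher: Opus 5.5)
Your route is the paper's. You kill harmonic $k$-forms on $M$ with Weitzenb\"ock plus the refined Kato inequality (constant $\frac{m}{m-1}$), and you rule out infinite $\pi_1(M)$ with Atiyah's $L^2$-index theorem on $\tilde M$. You then get $|\pi_1(M)|=1$ from $2=\chi(\tilde M)=|\pi_1(M)|\,\chi(M)$; the paper writes this multiplicativity via Chern--Gauss--Bonnet. Finally you handle the non-orientable and odd cases through the orientation double cover and $\chi(M)=0$.

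Two small deviations are harmless. In the non-orientable case you apply Step 1 directly on $M$, which is fine since Hodge theory does not need orientability, whereas the paper averages primitives over the deck group of $\hat M\to M$. Your reading of item (2) as $\pi_1(M)\cong\mathbb{Z}_2$ and $H^k(M,\mathbb{R})=0$ for $1\le k\le m$ is the correct one, since the statement as written says both ``simply connected'' and $\pi_1(M)\cong\mathbb{Z}_2$.

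One step fails as written. In Step 3 you only use that $\frac{m}{m-1}\tilde\Delta+\tilde w_{k,1}$ has non-negative spectrum on $\tilde M$, and then claim the Kato argument forces $|\psi|_{\tilde g}$ to be constant. In Th.~\ref{finiteness} this worked because $\gamma<2$ left a spare term $(2-\gamma)|d|\psi||^2$. Here the coefficient of $\tilde\Delta$ is exactly the Kato constant $\frac{m}{m-1}$, so the chain only gives
\[
0\ \ge\ \int_{\tilde M}\Big(\tfrac{m}{m-1}\big|d|\psi|_{\tilde g}\big|^2+\tilde w_{k,1}|\psi|_{\tilde g}^2\Big)\dvol_{\tilde g}\ \ge\ 0 .
\]
That says $|\psi|_{\tilde g}$ is a zero-energy state and gives no control on its gradient, so ``constant norm, hence zero by infinite volume'' does not follow.

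The fix is already in your Step 2. Lifting the positive ground state of $\frac{m}{m-1}\Delta+w_{k,1}$ and applying Allegretto--Piepenbrink works for any Riemannian covering, not only finite ones. So the bottom of the spectrum on $\tilde M$ is at least $\lambda_{k,1}>0$, and the same chain gives $0\ge\lambda_{k,1}\|\psi\|^2_{L^2}$, hence $\psi=0$ for $1\le k\le m-1$ directly. This is exactly the paper's second step: positivity on $\tilde M$ by pushing down, then $0\ge\tilde\lambda_{k,1}\|\psi\|^2$. Infinite volume is then needed only in degrees $0$ and $m$. With that replacement your argument is complete.
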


\begin{proof}
\noindent \textbf{First part.} We divided it in four steps.\\
\textbf{First step}. $M$ is a real homology sphere.\\
Since we assumed that \eqref{wsws} has entirely positive spectrum we know that $\lambda_{k,1}$, the smallest eigenvalue of \eqref{wsws}, is positive for each $k=1,...,m-1$. Furthermore, we have already observed that the domain of \eqref{wsws} equals the domain of the $L^2$-closure of the Laplace-Beltrami operator $\Delta$ and that the corresponding graph norms are equivalent. Therefore, from the min-max characterization of the eigenvalues we can deduce that $$\frac{m}{m-1}\|d_0\phi\|^2_{L^2\Omega^1(M,g)}+\langle w_{k,1}\phi,\phi\rangle_{L^2(M,g)}\geq \lambda_{k,1}\|\phi\|^2_{L^2(M,g)}$$ for each $\phi\in W^{1,2}(M,g)$, with $W^{1,2}(M,g)$ denoting the Sobolev space of $L^2$ functions on $(M,g)$ with $L^2$ gradient. Let us now consider an arbitrarily fixed harmonic $k$-form $\eta$ for some $k\in \{1,...,m-1\}$. We know that $|\eta|_g\in W^{1,2}(M,g)$, see e.g. \cite[Prop. 3.1]{BeiSob}. In this way, keeping in mind the refined Kato inequality \cite[Th. 6.3]{CAGH} or \cite[Th. 3.8]{Cibo} and the min-max characterization of the eigenvalues, we have:
$$
\begin{aligned}
0&=\langle \Delta_k\eta,\eta\rangle_{L^2\Omega^k(M,g)}\\
&=\int_{M}\left(|\nabla \eta|^2_g+g(W_k\eta,\eta)\right)\dvol_{g}\\
&\geq \int_{M}\left(\frac{m}{m-1}|d_0|\eta|_g|^2_g+w_{k,1}|\eta|^2_g\right)\dvol_{g}\\
&\geq \lambda_{k,1}\|\eta\|^2_{L^2(M,g)}.
\end{aligned}
$$
Since $\lambda_{k,1}>0$ we can conclude that $\eta\equiv0$ and thus $H^k(M,\mathbb{R})=\{0\}$ for each $k=1,...,m-1$. Moreover $H^m(M,\mathbb{R})\cong H^1(M,\mathbb{R})\cong \mathbb{R}$ as $M$ is compact and oriented. This shows that $M$ is a real homology sphere and, using the fact that  it is even dimensional, we can also conclude that $\chi(M)=2$.\\

\noindent \textbf{Second step:} $\mathcal{H}^{k}_{2}(\tilde{M},\tilde{g})=\{0\}$ for each $k=1,...,m-1$.\\
 Let $\pi:\tilde{M}\rightarrow M$ be the universal covering of $M$. Let us consider the operator 
\begin{equation}
\label{fvfv}
\frac{m}{m-1}\tilde{\Delta}+\tilde{w}_{k,1}:C^{\infty}_c(\tilde{M})\rightarrow C_c^{\infty}(\tilde{M})
\end{equation}
with $\tilde{w}_{k,1}:=w_{k,1}\circ \pi$, $\tilde{\Delta}:C^{\infty}_c(\tilde{M})\rightarrow C_c^{\infty}(\tilde{M})$ the Laplace-Beltrami operator of $(\tilde{M},\tilde{g})$ and $\tilde{g}=\pi^*g$. Arguing as in the proof of Th. \ref{finiteness} we know that \eqref{fvfv} is  essentially self-adjoint in $L^2(\tilde{M},\tilde{g})$. Let us denote with 
\begin{equation}
\label{liftz}
\frac{m}{m-1}\tilde{\Delta}+\tilde{w}_{k,1}:L^2(\tilde{M},\tilde{g})\rightarrow L^2(\tilde{M},\tilde{g})
\end{equation}
its $L^2$-closure. Since $\tilde{w}_{k,1}$ is bounded, it is  easy to deduce that the domain of \eqref{liftz} equals the domain of the $L^2$-closure of the Laplace-Beltrami operator $\tilde{\Delta}$ and that the corresponding graph norms are equivalent. Furthermore using again the pushing-down technique, see \cite[\S 4]{BMP}, or alternatively using \cite[Th. 1]{FCS} or \cite[Lemma 3.10]{PRS},  we can conclude that \eqref{liftz} has entirely positive spectrum. In particular there exists $\tilde{\lambda}_{k,1}\geq \lambda_{k,1}>0$ such that $$\frac{m}{m-1}\|\tilde{d}_0\phi\|^2_{L^2\Omega^1(\tilde{M},\tilde{g})}+\langle\tilde{w}_{k,1}\phi,\phi\rangle_{L^2(\tilde{M},\tilde{g})}\geq \tilde{\lambda}_{k,1}\|\phi\|^2_{L^2(\tilde{M},\tilde{g})}$$ for each $\phi\in W^{1,2}(\tilde{M},\tilde{g})$. Let us now denote with $\tilde{\Delta}_{k}:\Omega^{k}(\tilde{M})\rightarrow \Omega^{k}(\tilde{M})$ the Hodge Laplacian on $\tilde{M}$ and let 
\begin{equation}
\label{L2closureH}
\tilde{\Delta}_{k}:L^2\Omega^k(\tilde{M},\tilde{g})\rightarrow L^2\Omega^{k}(\tilde{M},\tilde{g})
\end{equation}
 be the $L^2$-closure of $\tilde{\Delta}_{k}:\Omega_c^{k}(\tilde{M})\rightarrow L^2\Omega^{k}(\tilde{M},\tilde{g})$. We label with $\mathcal{H}^{k}_{2}(\tilde{M},\tilde{g})$ the kernel of \eqref{L2closureH}. Let now $\psi\in \mathcal{H}^{k}_{2}(\tilde{M},\tilde{g})$ with $k\in \{1,...,m-1\}$. By arguing again as in the  proof of Th. \ref{finiteness} we know that  $\psi\in W^{1,2}(\tilde{M},\Lambda^k(\tilde{M}))$ and $|\psi|_{\tilde{g}}\in W^{1,2}(\tilde{M},\tilde{g})$. In this way, using again the Weitzenb\"ock formula and the refined Kato inequality, we  obtain
$$
\begin{aligned}
0&=\langle \tilde{\Delta}_k\psi,\psi\rangle_{L^2\Omega^{k}(\tilde{M},\tilde{g})}\\
&\geq \int_{M}\left(\frac{m}{m-1}|\tilde{d}_0|\psi|_{\tilde{g}}|^2_{\tilde{g}}+\tilde{w}_{k,1}|\psi|^2_{\tilde{g}}\right)\dvol_{\tilde{g}}\\
&\geq \tilde{\lambda}_{k,1}\|\psi\|^2_{L^2(\tilde{M},\tilde{g})}.
\end{aligned}
$$ We can thus conclude that $\mathcal{H}^{k}_{2}(\tilde{M},\tilde{g})=\{0\}$ for each $k=1,...,m-1$.\\

\noindent \textbf{Third step:} $\pi_1(M)$ is finite.\\
 We prove this step by contradiction. Let us assume that $\pi_1(M)$ is infinite and, as above, let $\pi:\tilde{M}\rightarrow M$ be the universal covering of $M$. We proved in the second step that $\mathcal{H}^{k}_{2}(\tilde{M},\tilde{g})=\{0\}$ for each $k=1,...,m-1$. Since $\pi_1(M)$ is infinite we also know that $(\tilde{M},\tilde{g})$ is a complete Riemannian manifold of infinite volume. Therefore   $\mathcal{H}^{0}_{2}(\tilde{M},\tilde{g})=\{0\}$ and since $\tilde{M}$ is oriented we can use the Hodge star operator to obtain $\mathcal{H}^{m}_{2}(\tilde{M},\tilde{g})=\{0\}$. Summarizing $$\mathcal{H}^{k}_{2}(\tilde{M},\tilde{g})=\{0\}$$ for each $k=0,...,m$. Applying now  Atiyah's $L^2$-index theorem again, we have
\begin{equation}
\label{L2}
\chi(M)=\sum_{k=0}^m(-1)^k\dim_{\pi_1(M)}\left(\mathcal{H}^{k}_{2}(\tilde{M},\tilde{g})\right)=0
\end{equation}
and clearly this is in contradiction with the conclusion of the first step above, namely that $\chi(M)=2$. We can thus conclude that $\pi_1(M)$ is finite.\\

\noindent \textbf{Forth step} $M$ is simply connected.\\
 Once again let $\pi:\tilde{M}\rightarrow M$ be the universal covering of $M$. Then $(\tilde{M},\tilde{g})$ is a compact and oriented Riemannian manifold with $\mathcal{H}^{k}_2(\tilde{M},\tilde{g})=\{0\}$ for each $k=1,...,m-1$ and thus we have $\chi(\tilde{M})=2$. If we now denote with $\ell$ the cardinality of $\pi_1(M)$ we obtain by means of the Chern-Gauss-Bonnet theorem:
$$2=\chi(\tilde{M})=\int_{\tilde{M}}\mathrm{e}(\tilde{M})=\ell\int_{M}\mathrm{e}(M)=2\ell.$$
We can thus conclude that $\ell=1$ and so $M$ is simply connected, as required.
This completes the first part of the proof.\\

\noindent\textbf{Second part}.\\
Since $M$ is not orientable there exists a connected, oriented double covering $\pi:\hat{M}\rightarrow M$. Once again let us consider the operator 
\begin{equation}
\label{fvfvx}
\frac{m}{m-1}\hat{\Delta}+\hat{w}_{k,1}:C^{\infty}(\hat{M})\rightarrow C^{\infty}(\hat{M})
\end{equation}
with $\hat{w}_{k,1}:=w_{k,1}\circ \pi$, $\hat{\Delta}:C^{\infty}_c(\hat{M})\rightarrow C_c^{\infty}(\hat{M})$ the Laplace-Beltrami operator of $(\hat{M},\hat{g})$ and $\hat{g}=\pi^*g$. Arguing as in the proof of Th. \ref{finiteness} we know that \eqref{fvfvx} is  essentially self-adjoint in $L^2(\hat{M},\hat{g})$ and   its unique $L^2$-closed extension, denoted here with
\begin{equation}
\label{liftza}
\frac{m+1}{m}\hat{\Delta}+\hat{w}_{k,1}:L^2(\hat{M},\hat{g})\rightarrow L^2(\hat{M},\hat{g})
\end{equation}
has entirely positive spectrum. We are therefore in the position to apply the first part of this theorem to  conclude that $(\hat{M},\hat{g})$ is a simply connected real homology sphere. This in turn implies that $\pi:\hat{M}\rightarrow M$ is the universal covering of $M$ and so we can conclude that $\pi_1(M)\cong \mathbb{Z}_2$. Let now $\eta \in \Omega^k(M)$, $k=\{1,...,m-1\}$ be a closed form. Then there exists $\beta\in \Omega^{k-1}(\hat{M})$ such that $d_{k-1}\beta=\pi^*\eta$. Let $\gamma:\hat{M}\rightarrow \hat{M}$ be the non-trivial lift of the identity $\mathrm{Id}:M\rightarrow M$. Then $d_{k-1}(\gamma^*\beta)=\pi^*\eta$ and thus $\frac{1}{2}(d_{k-1}(\beta+\gamma^*\beta))=\pi^*\eta$. Since $\beta+\gamma^*\beta$ is invariant with respect to the action of the Deck transformations group of $\hat{M}$, we know that 
there exists $\alpha\in \Omega^{k-1}(M)$ such that $\frac{1}{2}(\beta+\gamma^*\beta)=\pi^*\alpha$. We can thus conclude that $\eta=d_{k-1}\alpha$, as required.\\

\noindent\textbf{Third part}. By repeating the same argument used in the first step of the first part we can conclude that $H^k(M,\mathbb{R})=\{0\}$ for $k=1,...,m-1$. On the other hand, given that $m$ is odd, we know that $\chi(M)=0$. This implies that $H^m(M,\mathbb{R})\cong \mathbb{R}$ and so we can conclude that $M$ is a real homology sphere. Finally $M$ is orientable as $H^m(M,\mathbb{R})\neq \{0\}$.
\end{proof}

\begin{rem}
Note that the finiteness of $\pi_1(M)$ in  Th. \ref{simplyr} can also be obtained by applying \cite[Th. 1]{AntoXu}.
\end{rem}

We continue now by collecting some geometric applications: 

\begin{cor} 
\label{ory}
In the setting of Th. \ref{simplyr} let us consider an arbitrarily fixed smooth map $f:M\rightarrow M$. Assume additionally that either $m$ is even, $M$ orientable and the degree of $f$ is different from $-1$ or that $M$ is not orientable.  Then $$\{x\in M : f(x)=x\}\neq \emptyset.$$ 
\end{cor}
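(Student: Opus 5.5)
The natural tool is the Lefschetz fixed point theorem with real coefficients. If $f$ has no fixed points, then its Lefschetz number
$$L(f)=\sum_{k=0}^m(-1)^k\mathrm{Tr}\left(f^*:H^k(M,\mathbb{R})\rightarrow H^k(M,\mathbb{R})\right)$$
vanishes. So it is enough to show that $L(f)\neq 0$ under the stated hypotheses. To compute $L(f)$ we use Th. \ref{simplyr}, which tells us that $H^k(M,\mathbb{R})=\{0\}$ for every $k=1,\dots,m-1$ in all cases. Only the degree $0$ and degree $m$ terms can therefore contribute.

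\textbf{Degree $0$.} Since $M$ is connected, $H^0(M,\mathbb{R})\cong\mathbb{R}$ is spanned by the constant functions. The map $f^*$ fixes constants, so this term contributes exactly $1$.

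\textbf{Orientable case, $m$ even.} Here $H^m(M,\mathbb{R})\cong\mathbb{R}$, and $f^*$ acts on it as multiplication by $\deg f$. Because $m$ is even, the sign $(-1)^m$ equals $1$, and we get
$$L(f)=1+\deg f.$$
This is nonzero exactly when $\deg f\neq -1$, which is the assumption in the statement.

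\textbf{Non-orientable case.} Item (2) of Th. \ref{simplyr} assumes $m$ even, so this is the situation to treat. For a compact, connected, non-orientable manifold we have $H^m(M,\mathbb{R})=\{0\}$. Together with the second part of the proof of Th. \ref{simplyr}, which shows $H^k(M,\mathbb{R})=\{0\}$ for $1\leq k\leq m-1$, this gives $L(f)=1\neq 0$.

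In both cases $L(f)\neq0$, so the Lefschetz fixed point theorem forces $f$ to have a fixed point.

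The only delicate point is the non-orientable case. There one must recall that top-degree real cohomology vanishes, rather than reading item (2) of Th. \ref{simplyr} literally as saying $M$ is a homology sphere. Everything else is a direct count of traces.
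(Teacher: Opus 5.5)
Your proposal is correct and follows the paper's proof. Both apply the Lefschetz fixed point theorem and use the vanishing of $H^k(M,\mathbb{R})$ for $1\leq k\leq m-1$ from Th.~\ref{simplyr}, giving $L(f)=1+\deg(f)$ in the even-dimensional orientable case and $L(f)=1$ in the non-orientable case. Your point that the non-orientable case should use $H^m(M,\mathbb{R})=\{0\}$, rather than item (2) read literally, makes explicit what the paper's value $L(f)=1$ already relies on.
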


\begin{proof}
Let $$L(f)=\sum_{k=0}^m(-1)^k\mathrm{Tr}\left(f^*: H^{k}_{dR}(M)\rightarrow H^{k}_{dR}(M)\right).$$
According to the Lefschetz fixed point theorem if $L(f)\neq 0$ then $f$ must have some fixed point. In our setting the above formula boils down to $$L(f)=1$$ if $M$ is not orientable and to $$L(f)=1+\mathrm{deg}(f)\neq 0$$ in the other case. We thus  can conclude that  $\{x\in M : f(x)=x\}\neq \emptyset$, as desired
\end{proof}

\begin{cor}
Let $M$ and $N$ be  compact,  manifolds of dimension $m>0$ and $n>0$. Then $M\times N$ carries no Riemannian metric $g$ such that $$\frac{m+n}{m+n-1}\Delta+w_{k,1}:L^2(M\times N,g)\rightarrow L^2(M\times N,g)$$ has entirely positive spectrum for each $k=1,...,m-1$.
\end{cor}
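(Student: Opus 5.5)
The plan is to argue by contradiction: Th.~\ref{simplyr} forces $M\times N$ to be a real homology sphere, while the K\"unneth formula produces a nonzero real cohomology class in an intermediate degree. I read the condition in the statement as required for each $k=1,\dots,m+n-1$, which is what Th.~\ref{simplyr} needs for a manifold of dimension $m+n$. Note that $m+n\geq 2>1$, so Th.~\ref{simplyr} applies.

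\textbf{Step 1: reduce to orientable factors.} Suppose $g$ is such a metric on $M\times N$. Let $\hat M\to M$ be the identity if $M$ is orientable and the orientation double cover otherwise, and define $\hat N\to N$ in the same way. Then
$$\pi:\hat M\times\hat N\to M\times N$$
is a finite Riemannian covering once we equip $\hat M\times\hat N$ with $\hat g=\pi^*g$. The pulled-back potential is $\hat w_{k,1}=w_{k,1}\circ\pi$, and the lifted operator $\frac{m+n}{m+n-1}\hat\Delta+\hat w_{k,1}$ still has entirely positive spectrum. This follows from the same pushing-down argument used in the proofs of Th.~\ref{finiteness} and Th.~\ref{simplyr} (\cite[\S 4]{BMP}, \cite[Th. 1]{FCS}), which shows that the bottom of the spectrum does not decrease on a covering. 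This transfer of spectral positivity to the cover is the only analytic input, and it is already used in the paper. So we may assume that $M$ and $N$ are orientable, and hence that $M\times N$ is orientable.

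\textbf{Step 2: apply Th.~\ref{simplyr}.} By Th.~\ref{simplyr}, in either of its orientable cases (dimension $m+n$ even or odd), $M\times N$ is a real homology sphere. That is,
$$H^k(M\times N,\mathbb{R})=\{0\}\quad\text{for } 1\leq k\leq m+n-1.$$

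\textbf{Step 3: contradiction via K\"unneth.} The K\"unneth formula gives
$$H^m(M\times N,\mathbb{R})\supseteq H^m(M,\mathbb{R})\otimes H^0(N,\mathbb{R})\cong\mathbb{R}\otimes\mathbb{R}\neq\{0\},$$
since $M$ is compact, connected and oriented and $N$ is connected. Because $n\geq 1$, we have $1\leq m\leq m+n-1$, so this contradicts Step 2.

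The main point to check carefully is the orientability reduction in Step 1. Without it, $H^m(M,\mathbb{R})$ could vanish when $M$ is non-orientable. Passing to the orientation covers avoids that case entirely, and connectedness of the covers is automatic because orientation double covers of non-orientable connected manifolds are connected.
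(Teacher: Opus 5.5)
Your proof is correct and follows the paper's argument. You pass to orientation double covers (the paper says it suffices to treat the case where both factors are non-orientable; your use of the identity cover for an orientable factor handles all cases uniformly), lift spectral positivity by the pushing-down technique, and contradict Th.~\ref{simplyr} via the non-vanishing of $H^m(\hat M\times\hat N,\mathbb{R})$. Your reading of the index range as $k=1,\dots,m+n-1$ is the right one: with only $k=1,\dots,m-1$ the statement fails, e.g.\ for the product of round metrics on $S^2\times S^2$, where only $k=1$ is required and $w_{1,1}$, the smallest eigenvalue of the Ricci curvature, is positive.
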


\begin{proof}
It is enough to prove the case where both $M$ and $N$ are not orientable. So let $\hat{\pi}_M:\hat{M}\rightarrow M$ and $\hat{\pi}_N:\hat{N}\rightarrow N$ be the corresponding oriented double coverings. By the fact that $\hat{M}\times \hat{N}$ is not a real homology sphere, namely $H^m(\hat{M}\times \hat{N},\mathbb{R})\neq \{0\}$ and $H^n(\hat{M}\times \hat{N},\mathbb{R})\neq \{0\}$, we can conclude that $\hat{M}\times \hat{N}$ carries no Riemannian metric $\hat{g}$ such that $$\frac{m+n}{m+n-1}\hat{\Delta}+w_{k,1}:L^2(\hat{M}\times \hat{N},\hat{g})\rightarrow L^2(\hat{M}\times \hat{N},\hat{g})$$ has entirely positive spectrum for each $k=1,...,m-1$. Finally, using the pushing-down technique, we can conclude that also $M\times N$ carries no Riemannian metric $g$ such that $\frac{m+n}{m+n-1}\Delta+w_{k,1}:L^2(M\times N,g)\rightarrow L^2(M\times N,g)$ has entirely positive spectrum for each $k=1,...,m-1$.
\end{proof}

We have now the Riemannian version of Th. \ref{finiteness}.

\begin{teo}
\label{finitenessr}
Let $(M,g)$ be a compact Riemannian manifold of  dimension $m>1$. Assume that $\chi(M)\neq 0$ and there exists  $\gamma\in [0,\frac{m}{m-1})$ such that the operator $$\gamma\Delta+w_{k,1}:L^2(M,g)\rightarrow L^2(M,g)$$ has entirely non-negative spectrum for each $k=1,...,m-1$. Then $M$ has finite fundamental group.
\end{teo}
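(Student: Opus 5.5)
We argue by contradiction, following the strategy of the proof of Th. \ref{finiteness} combined with the third step of the proof of Th. \ref{simplyr}. Assume $\pi_1(M)$ is infinite and let $\pi:\tilde{M}\rightarrow M$ be the universal covering with $\tilde{g}=\pi^*g$, so that $(\tilde{M},\tilde{g})$ is complete with infinite volume. Put $\tilde{w}_{k,1}:=w_{k,1}\circ\pi$. Exactly as before, $\gamma\tilde{\Delta}+\tilde{w}_{k,1}$ is essentially self-adjoint on $C^{\infty}_c(\tilde{M})$, its domain coincides with that of $\tilde{\Delta}$, and by the pushing-down technique \cite[\S 4]{BMP} (or \cite[Th. 1]{FCS}, \cite[Lemma 3.10]{PRS}) its spectrum is still non-negative. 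Hence
$$\gamma\|\tilde{d}_0\phi\|^2_{L^2\Omega^1(\tilde{M},\tilde{g})}+\langle \tilde{w}_{k,1}\phi,\phi\rangle_{L^2(\tilde{M},\tilde{g})}\geq 0$$
for every $\phi\in W^{1,2}(\tilde{M},\tilde{g})$ and each $k=1,\dots,m-1$.

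Next we show that $\mathcal{H}^k_2(\tilde{M},\tilde{g})=\{0\}$ for $k=1,\dots,m-1$. Take $\psi\in\mathcal{H}^k_2(\tilde{M},\tilde{g})$. By \cite[Th.2]{Dod} we have $\psi\in W^{1,2}$, and by \cite[Prop. 3.1]{BeiSob} we have $|\psi|_{\tilde{g}}\in W^{1,2}(\tilde{M},\tilde{g})$. The Weitzenb\"ock formula, the refined Kato inequality $|\tilde{\nabla}\psi|^2\geq \frac{m}{m-1}|\tilde{d}_0|\psi||^2$ for harmonic forms (\cite[Th. 6.3]{CAGH}, \cite[Th. 3.8]{Cibo}) and the bound \eqref{triple} then give
$$0=\langle\tilde{\Delta}_k\psi,\psi\rangle\geq \int_{\tilde{M}}\Bigl(\bigl(\tfrac{m}{m-1}-\gamma\bigr)|\tilde{d}_0|\psi|_{\tilde{g}}|^2+\gamma|\tilde{d}_0|\psi|_{\tilde{g}}|^2+\tilde{w}_{k,1}|\psi|^2_{\tilde{g}}\Bigr)\dvol_{\tilde{g}}\geq \bigl(\tfrac{m}{m-1}-\gamma\bigr)\|\tilde{d}_0|\psi|_{\tilde{g}}\|^2.$$
Since $\gamma<\frac{m}{m-1}$, the function $|\psi|_{\tilde{g}}$ is constant. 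Being in $L^2$ on a manifold of infinite volume, it vanishes, so $\psi\equiv 0$.

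In degrees $0$ and $m$ we proceed as follows. $L^2$-harmonic functions on a complete manifold of infinite volume vanish, so $\mathcal{H}^0_2(\tilde{M},\tilde{g})=\{0\}$. Since $\tilde{M}$ is simply connected, it is orientable, and the Hodge star gives $\mathcal{H}^m_2(\tilde{M},\tilde{g})=\{0\}$. Atiyah's $L^2$-index theorem for the de Rham complex, as in \eqref{L2}, then yields $\chi(M)=\sum_k(-1)^k\dim_{\pi_1(M)}\mathcal{H}^k_2(\tilde{M},\tilde{g})=0$, contradicting the hypothesis $\chi(M)\neq 0$. Hence $\pi_1(M)$ is finite.

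The only delicate point is the Kato step, where the sharp constant $\frac{m}{m-1}$ is required uniformly for all $1\leq k\leq m-1$. The constant furnished by \cite{CAGH} for harmonic $k$-forms is $\frac{m-k+1}{m-k}$ or its dual analogue, which is always at least $\frac{m}{m-1}$, so the inequality holds with this uniform constant. One also needs the Kato inequality to hold in the weak $W^{1,2}$ sense on the noncompact cover; this is handled exactly as in Th. \ref{simplyr}. Finally, the case $\gamma=0$ is included in the argument above and needs no separate treatment.
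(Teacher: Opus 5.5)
Your proof is correct and is essentially the paper's argument: you lift to the universal cover, keep the spectrum non-negative by pushing-down, combine the Weitzenb\"ock formula, the refined Kato inequality and infinite volume to kill $\mathcal{H}^k_2(\tilde M,\tilde g)$ in every degree, and conclude $\chi(M)=0$ via Atiyah's $L^2$-index theorem. The only difference is that the paper first treats oriented $M$ and then passes to the oriented double cover (using $\chi(\hat M)=2\chi(M)\neq 0$), whereas you observe directly that $\tilde M$ is orientable; this suffices for the Hodge star step, since the $L^2$-index theorem for $d+d^*$ does not require $M$ to be orientable.
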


\begin{proof}
We will be brief as the proof follows by adapting to this setting the proof of Th. \ref{finiteness}. Let us first deal with the case when $M$ is oriented. By contradiction let us assume that $\pi_1(M)$ is infinite. Let $\pi:\tilde{M}\rightarrow M$ be the universal covering of $M$. As in the previous proof let us consider the operator $$\gamma\tilde{\Delta}+\tilde{w}_{k,1}:C^{\infty}_c(\tilde{M})\rightarrow C_c^{\infty}(\tilde{M})$$ and, with a little abuse of notation, let 
\begin{equation}
\label{lift3}
\gamma\tilde{\Delta}+\tilde{w}_{k,1}:L^2(\tilde{M},\tilde{g})\rightarrow L^2(\tilde{M},\tilde{g})
\end{equation}
 its unique $L^2$-closed extension. Then by the pushing-down technique we know that \eqref{lift3} has still non-negative spectrum.  Let now $\psi\in \mathcal{H}^{k}_{2}(\tilde{M},\tilde{g})$, $k\in \{1,...,m-1\}$. Since $\psi\in W^{1,2}(\tilde{M}, \Lambda^{p,0}(\tilde{M}))$ and $|\psi|_{\tilde{g}}\in W^{1,2}(\tilde{M},\tilde{h})$ we can use the Weitzenb\"ock formula and the refined Kato inequality to  obtain
$$
\begin{aligned}
0&=\langle \tilde{\Delta}_k\psi,\psi\rangle_{L^2\Omega^{k}(\tilde{M},\tilde{g})}\\
&\geq \int_{\tilde{M}}\left(\frac{m}{m-1}|\tilde{d}_0|\psi|_{\tilde{g}}|^2_{\tilde{g}}+\tilde{w}_{k,1}|\psi|^2_{\tilde{g}}\right)\dvol_{\tilde{g}}\\
&\geq \int_{\tilde{M}}\left(\frac{m}{m-1}-\gamma\right)|\tilde{d}_0|\psi|_{\tilde{g}}|^2_{\tilde{g}}\dvol_{\tilde{g}}\geq 0.
\end{aligned}
$$ We can thus conclude that $|\psi|_{\tilde{g}}$ is a constant function on $\tilde{M}$ and since  $|\psi|_{\tilde{g}}\in L^2(\tilde{M},\tilde{g})$ and $(\tilde{M},\tilde{g})$ has infinite volume, we can deduce that $\psi\equiv 0$. This shows that $\mathcal{H}^{k}_{2}(\tilde{M},\tilde{g})=\{0\}$ for each $k=1,...,m-1$. The vanishing of $\mathcal{H}^0_2(\tilde{M},\tilde{g})$ follows by the fact that $(\tilde{M},\tilde{g})$ is complete with infinite volume while the vanishing of $\mathcal{H}^m_2(\tilde{M},\tilde{g})$ follows because the Hodge star operator $*$ induces an isomorphism between $\mathcal{H}^0_2(\tilde{M},\tilde{g})$ and $\mathcal{H}^m_2(\tilde{M},\tilde{g})$. Now by using Atiyah's $L^2$-index theorem as in the proof of Th. \ref{simplyr}, we deduce that $\chi(M)=0$ which contradicts the assumption $\chi(M)\neq 0$. We can thus conclude that $\pi_1(M)$ is finite, as required. If $M$ is not oriented let us consider a connected oriented double covering $\pi:\hat{M}\rightarrow M$. We know that $\chi(\hat{M})\neq 0$ and by the pushing-down technique we also know that  $$\gamma\hat{\Delta}+\hat{w}_{k,1}:L^2(M,g)\rightarrow L^2(M,g)$$ has entirely non-negative spectrum.  We can argue now as in the first part of this proof to conclude that $\pi_1(\hat{M})$ is finite and consequently we can conclude that also $\pi_1(M)$ is finite.
\end{proof}

\begin{cor}
In the setting of Th. \ref{finitenessr} every harmonic $k$-form on $M$ has constant length and consequently $$\dim(H^k_{dR}(M))\leq \frac{m!}{k!(m-k)!}$$ for each $k=1,...,m-1$.
\end{cor}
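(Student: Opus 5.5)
The plan is to copy, on $M$ itself, the argument of the Corollary that follows Th. \ref{finiteness}, with the Kähler Weitzenb\"ock formula replaced by the Riemannian one $\Delta_k=\nabla^t\nabla+W_k$. Fix $k\in\{1,\dots,m-1\}$ and a harmonic $k$-form $\eta$ on $M$. Since $M$ is compact, $\eta$ is smooth, so $\eta\in W^{1,2}(M,\Lambda^kT^*M)$ and $|\eta|_g\in W^{1,2}(M,g)$ by \cite[Prop. 3.1]{BeiSob}. I will combine the Weitzenb\"ock formula, the refined Kato inequality for harmonic forms $|\nabla\eta|^2_g\geq \frac{m}{m-1}|d_0|\eta|_g|^2_g$ (\cite[Th. 6.3]{CAGH}, \cite[Th. 3.8]{Cibo}), the pointwise bound \eqref{triple}, and the non-negativity of $\gamma\Delta+w_{k,1}$ applied to the test function $\phi=|\eta|_g$. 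Together these give
$$0=\langle\Delta_k\eta,\eta\rangle_{L^2\Omega^k(M,g)}\geq\int_M\Big(\tfrac{m}{m-1}|d_0|\eta|_g|^2_g+w_{k,1}|\eta|_g^2\Big)\dvol_g\geq\Big(\tfrac{m}{m-1}-\gamma\Big)\int_M|d_0|\eta|_g|^2_g\dvol_g\geq0 .$$
The last inequality is the min-max characterization: $\gamma\|d_0\phi\|^2+\langle w_{k,1}\phi,\phi\rangle\geq0$ for every $\phi\in W^{1,2}(M,g)$. Because $\gamma<\frac{m}{m-1}$, it follows that $d_0|\eta|_g=0$, so $|\eta|_g$ is constant on the connected manifold $M$. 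The assumption $\chi(M)\neq0$ is not needed for this part.

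For the dimension bound, I argue by contradiction. Suppose $\eta_1,\dots,\eta_\ell$ are linearly independent harmonic $k$-forms with $\ell>\binom{m}{k}=\dim\Lambda^kT^*_xM$. Fix any point $x$. The values $\eta_1(x),\dots,\eta_\ell(x)$ are then linearly dependent, so there are real $\mu_j$, not all zero, with $\sum_j\mu_j\eta_j(x)=0$. The form $\eta:=\sum_j\mu_j\eta_j$ is harmonic, so by the first part $|\eta|_g$ is constant. Since it vanishes at $x$, $\eta\equiv0$, which contradicts linear independence. By Hodge theory $\dim H^k_{dR}(M)$ equals the dimension of the space of harmonic $k$-forms, and the bound follows.

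I expect the main obstacle to be justifying the refined Kato step. The refined Kato inequality holds where $\eta\neq0$; on the zero set one needs the statement in $W^{1,2}$ form, which is exactly what the cited results provide, so I will rely on them. Apart from this, the argument is a direct transcription of the Kähler case.
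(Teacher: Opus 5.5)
Your proposal is correct and is essentially the paper's argument. The paper's one-line proof (``follows immediately by Th.~\ref{finitenessr}'') means running the Weitzenb\"ock, refined Kato and spectral estimate from that proof on $M$ itself rather than on $\tilde{M}$, and then applying the pointwise evaluation argument from the K\"ahler corollary after Th.~\ref{finiteness}; this is exactly what you do, and you are also right that $\chi(M)\neq 0$ plays no role here.
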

\begin{proof}
This follows immediately by Th. \ref{finitenessr}.
\end{proof}

\begin{rem}
Note that the hypothesis of Th. \ref{finitenessr}, namely that $\chi(M)\neq 0$, forces $m$ to be even.
\end{rem}

A sufficient condition to apply Th. \ref{simplyr} and Th. \ref{finitenessr} can be given in terms of the curvature operator. More precisely let $\mathcal{R}$ be the curvature operator of $(M,g)$. We recall that $\mathcal{R}$ is the endomorphism of $\Lambda^2TM$ such that for each $X,Y\in \mathfrak{X}(M)$ we have $\langle\mathcal{R}(X\wedge Y),X\wedge Y\rangle_g=g(R(X,Y)X,Y)$, with $R$ the curvature tensor of $(M,g)$. Let us denote with $$\sigma_1\leq...\leq \sigma_{n}$$ with $n=\frac{m(m-1)}{2}$ the continuous functions on $M$ such that $$\{\sigma_1(x)\leq...\leq \sigma_{n}(x)\}$$ are the eigenvalues of $\mathcal{R}_x:\Lambda^2T_xM\rightarrow \Lambda^2T_xM$ for each $x\in M$. We are interested in the Schr\"odinger operator $$\frac{4}{m^2-m}\Delta+\sigma_1:C^{\infty}(M)\rightarrow C^{\infty}(M)$$ and by adopting the same abuse of notation used before, we still denote with $$\frac{4}{m^2-m}\Delta+\sigma_1:L^2(M,g)\rightarrow L^2(M,g)$$ its unique closed (and thus self-adjoint) extension. We have the following results:

\begin{teo}
\label{curvop}
Let $(M,g)$ be a Riemannian manifold of dimension $m>1$. Assume that the operator 
\begin{equation}
\label{cuop}
\frac{4}{m^2-m}\Delta+\sigma_1:L^2(M,g)\rightarrow L^2(M,g)
\end{equation}
 has entirely positive spectrum and in the case $m\leq3$ that $M$ is additionally compact. We then have the following properties: 
\begin{enumerate}
 \item If $m$ is even and $M$ is orientable, then  $M$  is a compact, simply connected real homology sphere.
\item If $m$ is even and $M$ is not orientable, then it is a compact, real homology sphere with $\pi_1(M)\cong\mathbb{Z}_2$.
\item If $m$ is odd then $M$ is a real homology sphere. In particular $M$ is orientable.
\end{enumerate}
\end{teo}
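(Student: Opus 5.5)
The plan is to reduce Th. \ref{curvop} to Th. \ref{simplyr}. This needs two ingredients: a pointwise comparison between $w_{k,1}$ and $\sigma_1$, and a compactness statement when $M$ is not assumed compact.

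For the comparison, I would use the classical fact (Gallot--Meyer) that the Weitzenb\"ock endomorphism satisfies $W_k\geq k(m-k)\sigma_1$ pointwise whenever $\sigma_1$ is the lowest eigenvalue of the curvature operator $\mathcal{R}$. This holds regardless of the sign of $\sigma_1$: write $W_k=\sum_\alpha \lambda_\alpha (\omega_\alpha)_*^2$-type expression in an orthonormal eigenbasis of $\mathcal{R}$, and use $\sum_\alpha (\omega_\alpha)_*^*(\omega_\alpha)_* = k(m-k)\,\mathrm{Id}$ on $\Lambda^k$. Hence $w_{k,1}\geq k(m-k)\sigma_1$. Since $k(m-k)\geq m-1$ for $1\leq k\leq m-1$, I need to pass from positivity of $\frac{4}{m^2-m}\Delta+\sigma_1$ to positivity of $\frac{m}{m-1}\Delta+w_{k,1}$.

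Multiplying by $c_k:=k(m-k)>0$ gives positivity of $\frac{4c_k}{m(m-1)}\Delta+c_k\sigma_1$. Then I would use monotonicity in $\gamma$, as in the last corollary of the Kähler section: if $\gamma\Delta+V$ is positive and $\gamma'\geq\gamma$, then $\gamma'\Delta+V$ is positive, since $\Delta\geq0$. It therefore suffices that $\frac{4c_k}{m(m-1)}\leq\frac{m}{m-1}$, i.e.\ $4k(m-k)\leq m^2$. This holds because $4k(m-k)\leq m^2$ is equivalent to $(m-2k)^2\geq0$. Combining with $w_{k,1}\geq c_k\sigma_1$, the quadratic form of $\frac{m}{m-1}\Delta+w_{k,1}$ dominates that of $\frac{m}{m-1}\Delta+c_k\sigma_1$, so min-max gives positivity for every $k=1,\dots,m-1$. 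This explains the constant $\frac{4}{m^2-m}$.

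For compactness when $m\geq4$, I would observe that the Ricci curvature satisfies $\mathrm{Ric}\geq (m-1)\sigma_1$; indeed this is the case $k=1$ above, since $W_1=\mathrm{Ric}$. Then $r\geq(m-1)\sigma_1$, and positivity of $\frac{4}{m(m-1)}\Delta+\sigma_1$ gives positivity of $\frac{4}{m}\Delta+r$. Antonelli--Xu \cite[Th. 1]{AntoXu} yields compactness for $\gamma\in[0,\frac{m-1}{m-2})$ when $m\geq3$, and $\frac{4}{m}<\frac{m-1}{m-2}$ is equivalent to $4m-8<m^2-m$, i.e.\ $m^2-5m+8>0$, which always holds. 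Thus $M$ is compact; note this step implicitly requires completeness, which I would assume as in Th. \ref{rationally}. For $m\leq3$, compactness is assumed. Once compactness is established, all three items follow verbatim from Th. \ref{simplyr}.

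The main obstacle I expect is justifying $W_k\geq k(m-k)\sigma_1$ cleanly when $\sigma_1$ is negative, since that is exactly the regime in which the theorem has content; I would cite Gallot--Meyer or Poor's formula $W_k=-\sum_\alpha\lambda_\alpha(\omega_\alpha)_*^2$ together with the Casimir identity for $\sum(\omega_\alpha)_*^2$.
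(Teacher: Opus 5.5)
Your proposal is correct and follows the paper's proof: the Gallot--Meyer bound $w_{k,1}\geq k(m-k)\sigma_1$ together with $4k(m-k)\leq m^2$ gives positivity of $\frac{m}{m-1}\Delta+w_{k,1}$; compactness comes from $r_1\geq (m-1)\sigma_1$ and Antonelli--Xu; Th.~\ref{simplyr} then gives the three items. The only difference is favourable to you: you multiply directly to get positivity of $\frac{4}{m}\Delta+r_1$, whereas the paper first enlarges the constant and lands on $\frac{4}{m-1}\Delta+r_1$, which is the excluded endpoint of the range $[0,\frac{4}{m-1})$ the paper attributes to \cite[Cor. 1]{AntoXu} in Th.~\ref{rationally}. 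Your $\gamma=\frac{4}{m}$ lies strictly inside that range, so you can cite Cor.~1 rather than relying on the $\frac{m-1}{m-2}$ range, and your remark that completeness must be assumed for the noncompact case is well taken.
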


\begin{proof}
Let us start by showing that if $m\geq 4$ then $M$ is compact. As in the previous section let us denote with $r_1:M\rightarrow \mathbb{R}$ the function that to each $x\in M$ assigns the smallest eigenvalue of $\mathrm{ric}_{g,x}:T_xM\rightarrow T_xM$, with $\mathrm{ric}_g$ the tangent bundle endomorphism such that $\mathrm{Ric}_g(\cdot,\cdot)=g(\mathrm{ric}_g\cdot,\cdot)$. Thanks to \cite[Cor. 2.6]{Gallot} we have 
$$
\begin{aligned}
\left\langle \frac{4}{m-1}\Delta\phi+r_{1}\phi,\phi\right\rangle_{L^2(M,g)}&\geq \left\langle \frac{4}{m-1}\Delta\phi+(m-1)\sigma_1\phi,\phi\right\rangle_{L^2(M,g)}\\
&=(m-1)\left\langle \frac{4}{(m-1)^2}\Delta\phi+\sigma_1\phi,\phi\right\rangle_{L^2(M,g)}
\end{aligned}
$$
Since \eqref{cuop} has entirely positive spectrum and  $\frac{1}{(m-1)^2}>\frac{1}{m^2-m}$ for each positive integer $m$, we can conclude that $$\frac{4}{(m-1)^{2}}\Delta+\sigma_1:L^2(M,g)\rightarrow L^2(M,g)$$ has entirely positive spectrum and thus that  also
$$\frac{4}{m-1}\Delta\phi+r_{1}:L^2(M,g)\rightarrow L^2(M,g)$$ has entirely positive spectrum. We can thus conclude that $M$ is compact thanks to \cite[Cor. 1]{AntoXu}. Let us now continue by denoting with $\mu$ the smallest eigenvalue of \eqref{cuop}. Clearly $\mu>0$ as we assumed that \eqref{cuop} has entirely positive spectrum. We want to show that 
\begin{equation}
\label{entpos}
\frac{m}{m-1}\Delta+w_{k,1}:L^2(M,g)\rightarrow L^2(M,g)
\end{equation}
 has entirely positive spectrum for each $k=1,...,m-1$. In order to accomplish this goal we prove that $$
\left\langle \frac{m}{m-1}\Delta\phi+w_{k,1}\phi,\phi\right\rangle_{L^2(M,g)}\geq k(m-k)\mu\|\phi\|^2_{L^2(M,g)}
$$
 for each $\phi\in C^{\infty}(M)$ and $k=1,...,m-1$.  Indeed, thanks to \cite[Cor. 2.6]{Gallot}, we have 
$$
\begin{aligned}
\left\langle \frac{m}{m-1}\Delta\phi+w_{k,1}\phi,\phi\right\rangle_{L^2(M,g)}&\geq \left\langle \frac{m}{m-1}\Delta\phi+k(m-k)\sigma_1\phi,\phi\right\rangle_{L^2(M,g)}\\
&=k(m-k)\left\langle \frac{m}{k(m-k)(m-1)}\Delta\phi+\sigma_1\phi,\phi\right\rangle_{L^2(M,g)}\\
& \geq k(m-k)\left\langle \frac{4}{m^2-m}\Delta\phi+\sigma_1\phi,\phi\right\rangle_{L^2(M,g)}\\
&\geq k(m-k)\mu\|\phi\|^2_{L^2(M,g)}.
\end{aligned}
$$ Now that we established that the operator \eqref{entpos} has entirely positive spectrum for each $k=1,...,m-1$, we can conclude by applying Th. \ref{simplyr}.
\end{proof}

\begin{teo}
\label{curvop2}
Let $(M,g)$ be a compact Riemannian manifold of  dimension $m>1$. Assume that $\chi(M)\neq 0$ and the operator $$\gamma\Delta+\sigma_1:L^2(M,g)\rightarrow L^2(M,g)$$ has entirely non-negative spectrum for some $\gamma\in [0,\frac{4}{m^2-m})$. Then $M$ has finite fundamental group.
\end{teo}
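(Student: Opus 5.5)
The plan is to reduce Th. \ref{curvop2} to Th. \ref{finitenessr}, in the same way that Th. \ref{curvop} was reduced to Th. \ref{simplyr}. Concretely, I will show that for each $k=1,\dots,m-1$ there is some $\gamma'\in[0,\frac{m}{m-1})$ for which
\[
\gamma'\Delta+w_{k,1}:L^2(M,g)\rightarrow L^2(M,g)
\]
has entirely non-negative spectrum. Since $\chi(M)\neq 0$ is assumed, Th. \ref{finitenessr} then gives that $\pi_1(M)$ is finite. The choice $\gamma'=k(m-k)\gamma$ looks right; if Th. \ref{finitenessr} really requires a single $\gamma'$ for all $k$, I will instead take $\gamma'=\max_k k(m-k)\gamma$.

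\textbf{Pointwise comparison.} As in the proof of Th. \ref{curvop}, \cite[Cor. 2.6]{Gallot} gives $W_k\geq k(m-k)\sigma_1$ as endomorphisms of $\Lambda^kT^*M$, so $w_{k,1}\geq k(m-k)\sigma_1$. For any $\phi\in C^\infty(M)$ and $k\in\{1,\dots,m-1\}$ this yields
\[
\left\langle k(m-k)\gamma\Delta\phi+w_{k,1}\phi,\phi\right\rangle_{L^2(M,g)}\geq k(m-k)\left\langle \gamma\Delta\phi+\sigma_1\phi,\phi\right\rangle_{L^2(M,g)}\geq 0.
\]
The last inequality is the hypothesis that $\gamma\Delta+\sigma_1$ has non-negative spectrum, read through the min-max characterization. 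Since $C^\infty(M)$ is a core for the closure, the operator $k(m-k)\gamma\Delta+w_{k,1}$ has non-negative spectrum. Non-negativity is also preserved when the coefficient of $\Delta$ is increased, because $\Delta\geq 0$. So taking the maximum over $k$ costs nothing.

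\textbf{Size of the constant.} It remains to check that $\gamma'<\frac{m}{m-1}$. The product $k(m-k)$ is at most $m^2/4$, and $\gamma<\frac{4}{m^2-m}$, so
\[
k(m-k)\gamma<\frac{m^2}{4}\cdot\frac{4}{m(m-1)}=\frac{m}{m-1}.
\]
The strict inequality survives because $\gamma<\frac{4}{m^2-m}$ is strict, including when $\gamma=0$. Hence $\gamma'\in[0,\frac{m}{m-1})$ and Th. \ref{finitenessr} applies.

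The only delicate point is citing correctly the pointwise bound $W_k\geq k(m-k)\sigma_1$ from \cite{Gallot}. This is the same inequality already used in the proof of Th. \ref{curvop}, so I expect no real obstacle. Everything else is the arithmetic check above.
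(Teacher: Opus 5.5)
Your proposal is correct and is essentially the paper's argument: both rest on the Gallot--Meyer bound $w_{k,1}\geq k(m-k)\sigma_1$ and on $4k(m-k)\leq m^2$, followed by the $L^2$-index argument of Th. \ref{finitenessr}. The only difference is packaging. You verify the hypothesis of Th. \ref{finitenessr} on $M$ with the single constant $\gamma'=\max_k k(m-k)\gamma<\frac{m}{m-1}$ and invoke that theorem as a black box, as the paper itself does for Th. \ref{curvop}. The paper instead lifts $\gamma\Delta+\sigma_1$ to the universal cover, inserts the bound directly into the Bochner--Kato estimate for $L^2$-harmonic forms, and then repeats the proof of Th. \ref{finitenessr}.
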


\begin{proof}
Since $\gamma\Delta+\sigma_1:L^2(M,g)\rightarrow L^2(M,g)$ has entirely non-negative spectrum for some $\gamma\in [0,\frac{4}{m^2-m})$ we also know that 
$$\gamma\tilde{\Delta}+\tilde{\sigma}_1:L^2(\tilde{M},\tilde{g})\rightarrow L^2(\tilde{M},\tilde{g})$$ has entirely non-negative spectrum for the same $\gamma\in [0,\frac{4}{m^2-m})$.
Let now $\psi\in \mathcal{H}_2^k(\tilde{M},\tilde{g})$ with $1\leq k\leq m-1$. We have $$
\begin{aligned}
0&=\langle \tilde{\Delta}_k\psi,\psi\rangle_{L^2\Omega^{k}(\tilde{M},\tilde{g})}\\
&\geq \int_{M}\left(\frac{m}{m-1}|\tilde{d}_0|\psi|_{\tilde{g}}|^2_{\tilde{g}}+\tilde{w}_{k,1}|\psi|^2_{\tilde{g}}\right)\dvol_{\tilde{g}}\\
&\geq k(m-k)\int_{M}\left(\frac{4}{m^2-m}|\tilde{d}_0|\psi|_{\tilde{g}}|^2_{\tilde{g}}+\tilde{\sigma}_1|\psi|^2_{\tilde{g}}\right)\dvol_{\tilde{g}}\\
&\geq k(m-k)\int_{M}\left(\frac{4}{m^2-m}-\gamma\right)|\tilde{d}_0|\psi|_{\tilde{g}}|^2_{\tilde{g}}\dvol_{\tilde{g}}\\
&\geq 0
\end{aligned}
$$
and so we can conclude that $|\psi|_{\tilde{g}}$ is a constant function on $\tilde{M}$. We can now repeat word by word the proof of Th. \ref{finitenessr} to achieve the desired conclusion.
\end{proof}

Finally, we conclude with the following  remarks.\\

\noindent Looking at Th. \ref{simplyr} the following question arises naturally: when does the operator 
\begin{equation}
\label{con+}
\frac{m}{m-1}\Delta+w_{k,1}:L^2(M,g)\rightarrow L^2(M,g)
\end{equation}
 has entirely positive spectrum? Below we collect some sufficient conditions  for the spectral positivity of \eqref{con+} that at the same time  allow $W_k$ to be entirely negative in some regions of $M$. Namely:\\

\noindent 1) $(M,g)$ has wells of negative $W_k$-curvature of suitable depth and volume, see \cite[Prop. 12]{ElRos};\\

\noindent 2) On $(M,g)$ the following inequalities hold true: 
$$\int_Mw_{k,1}\dvol_g>0\quad \mathrm{and}\quad P\leq \frac{m}{m-1}\frac{\int_M w_{k,1}\dvol_g}{\|w_{k,1}\|_{L^{\infty}(M)}(4\vol_g(M)\|w_{k,1}\|_{L^{\infty}(M)}+\int_M w_{k,1}\dvol_g)}$$ with $P$ denoting the Poincar\'e constant of $(M,g)$. See \cite[Cor. 1.2]{DabLock}.\\

\noindent 3) There exists a constant $w_0>0$ such that the function $\left(\frac{m-1}{m}w_{k,1}-w_0\right)_{-}$, that is the negative part of $\frac{m-1}{m}w_{k,1}-w_0$, satisfies a Kato-type condition: $$c_{\mathrm{Kato}}\left(\left(\frac{m-1}{m}w_{k,1}-w_0\right)_{-},w_0\right):=\left\|(\Delta+w_0)^{-1}\left(\frac{m-1}{m}w_{k,1}-w_0\right)_{-}\right\|_{L^{\infty}(M)}< 1.$$ Then, according to \cite[Cor. 2.3]{RoSt}, the operator $\Delta+w_0-\left(\frac{m-1}{m}w_{k,1}-w_0\right)_{-}$ has entirely positive spectrum and since $$\Delta+\frac{m-1}{m}w_{k,1}\geq \Delta+w_0-\left(\frac{m-1}{m}w_{k,1}-w_0\right)_{-}$$ in the sense of quadratic form, we can conclude that $\Delta+\frac{m-1}{m}w_{k,1}$, and thus also 
$\frac{m}{m-1}\Delta+w_{k,1}$, has entirely positive spectrum. We refer to \cite{CarRose} and \cite{RoSt} for more details.\\

\noindent 4) Let $\tau$ be the smallest eigenvalue of $\frac{m+1}{m}\Delta+w_{k,1,+}$, with $w_{k,1,+}$ the positive part of $w_{k,1}$. If $\tau>w_{k,1,-}(x)$ for each $x\in M$, with $w_{k,1,-}$ the negative part of $w_{k,1}$, then   $\frac{m}{m-1}\Delta+w_{k,1}$ has entirely positive spectrum. This follows easily by observing that given any $\phi\in W^{1,2}(M,g)$ with $\|\phi\|_{L^2(M,g)}=1$ we have 
$$
\begin{aligned}
\frac{m}{m-1}\|d_0\phi\|^2_{L^2\Omega^1(M,h)}+\langle w_{k,1}\phi,\phi\rangle_{L^2(M,g)}&=\frac{m}{m-1}\|d_0\phi\|^2_{L^2\Omega^1(M,g)}+\langle(w_{k,1,+}-w_{k,1,-})\phi,\phi\rangle_{L^2(M,g)}\\
& =\frac{m}{m-1}\|d_0\phi\|^2_{L^2\Omega^1(M,g)}+\langle w_{k,1,+}\phi,\phi\rangle_{L^2(M,g)}-\langle w_{k,1,-}\phi,\phi\rangle_{L^2(M,g)}\\&\geq\tau-\max_{M}w_{k,1,-}>0.
\end{aligned} $$

\noindent 5) On $(M,g)$ the following inequalities hold true: 
$$\int_Mw_{k,1}\dvol_g>0\quad \mathrm{and}\quad \frac{m+1}{m}\beta_1>-\min_M(w_{k,1})$$ with $\beta_1$ the first positive eigenvalue of the Laplace-Beltrami operator $\Delta:L^2(M,g)\rightarrow L^2(M,g)$. Indeed let $\phi\in W^{1,2}(M,g)$ with $\|\phi\|_{L^2(M,g)}=1$. By decomposing $\phi$ as $\phi=\phi_0+\phi_1$ with $\phi_0$ and $\phi_1$ orthogonal in $L^2(M,g)$ and $\phi_0$  constant we have
$$
\begin{aligned}
\frac{m}{m-1}\|d_0\phi\|^2_{L^2\Omega^1(M,g)}+\langle w_{k,1}\phi,\phi\rangle_{L^2(M,g)}&=\frac{m}{m-1}\|d_0\phi_1\|^2_{L^2\Omega^1(M,g)}+\int_M w_{k,1}\phi^2\dvol_g\\
& \geq \frac{m+1}{m}\beta_1+\min_{M}(w_{k,1})>0.
\end{aligned} $$

\newpage

\appendix
\section{Rational dimension, partial positivity and Bochner-type result, by Shiyu Zhang}

Proposition \ref{prop-vanishing} shows that the total $(m-k)$-positivity of the mean curvature of the tangent bundle implies that the rational dimension is at least $k$. Together with the main result of \cite{LZZ25}, this curvature positivity can be converted into the slope positivity
\[
\mu_\alpha(\Lambda^{m-k+1}T^{1,0}X)>0
\]
for some movable class $\alpha$; see, for example, Proposition \ref{prop-existence-hermitian}. The main purpose of this appendix is to prove the following equivalence, based on recent advances in \cite{ou25,CP25}.

\begin{teo}\label{prop-characterization-rationalconnectedness}
	For a compact K\"ahler manifold $X$ of complex dimension $m$, the condition $\rd(X)\geq k$ is equivalent to the existence of a movable class $\alpha$ such that
    \[
    \mu_\alpha(\Lambda^{m-k+1}T^{1,0}X)>0.
    \]
\end{teo}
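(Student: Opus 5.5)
The plan is to prove the two implications separately. Throughout, $\mu_\alpha$ is read as the minimal slope $\mu_\alpha^{\min}$ with respect to a movable class $\alpha$, i.e. the infimum of $\alpha$-slopes of torsion-free quotients. Both directions go through the MRC fibration $f:X\dashrightarrow Y$, with indeterminacy locus $Z$ of codimension $\geq 2$. By \cite{GHS}, $Y$ is not uniruled, and by \cite{ou25} (in the Kähler setting) $K_Y$ is pseudo-effective.

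\emph{($\Leftarrow$).} Assume $\mu^{\min}_\alpha(\Lambda^{m-k+1}T^{1,0}X)>0$ for some movable $\alpha$, and suppose by contradiction that $\rd(X)\leq k-1$. Then $p:=\dim Y\geq m-k+1$.
\begin{enumerate}
\item On $X\setminus Z$ the differential gives a generically surjective map $T^{1,0}X\to f^*T_Y$. Taking exterior powers gives a generically surjective map $\Lambda^{m-k+1}T^{1,0}X\to \Lambda^{m-k+1}f^*T_Y$.
\item Saturate the image and extend it over $Z$ (codimension $\geq 2$, as in the proof of Prop. \ref{coro-rationaldimension}, via \cite{GR56}). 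This produces a torsion-free quotient $\mathcal{Q}$ of $\Lambda^{m-k+1}T^{1,0}X$ whose determinant is, up to an effective correction, a positive multiple of $-f^*K_Y$.
\item Since $K_Y$ is pseudo-effective, $f^*K_Y$ extends to a pseudo-effective class on $X$. Pairing a pseudo-effective class with a movable class gives something nonnegative, so $c_1(\mathcal{Q})\cdot\alpha\leq 0$.
\item Hence $\mu_\alpha(\mathcal{Q})\leq 0$, contradicting the hypothesis. The condition $p\geq m-k+1$ is exactly what makes $\Lambda^{m-k+1}f^*T_Y$ nonzero.
\end{enumerate}

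\emph{($\Rightarrow$).} Assume $\rd(X)\geq k$, so $p=\dim Y\leq m-k$ and the general fibre $F$ is rationally connected of dimension at least $k$.
\begin{enumerate}
\item Replace $X$ by a modification $\pi:X'\to X$ on which $f$ becomes a holomorphic map $f':X'\to Y'$, with $Y'$ a smooth Kähler model. Movable classes and slopes of reflexive sheaves behave well under $\pi_*$, so it suffices to work on $X'$ and push the resulting class down.
\item Choose the class $\alpha_0=f'^*[\omega_Y]^{p}\cdot[\omega]^{m-p-1}$. This is movable and is essentially the class of the general fibre polarized by $\omega$. Set $\alpha_\varepsilon=\alpha_0+\varepsilon[\omega]^{m-1}$.
\item Use the relative tangent sequence $0\to T_{X'/Y'}\to T_{X'}\to f'^*T_{Y'}$. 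This induces a filtration of $\Lambda^{m-k+1}T_{X'}$ whose graded pieces are subsheaves of $\Lambda^iT_{X'/Y'}\otimes\Lambda^j f'^*T_{Y'}$ with $i+j=m-k+1$. Since $j\leq p\leq m-k$, every piece has $i\geq 1$.
\item Against $\alpha_0$, sheaves pulled back from $Y'$ have slope $0$, because $\alpha_0$ only sees the fibres. Rational connectedness of $F$ gives $\mu^{\min}_{\omega|_F}(T_F)>0$ by the Campana–Păun/Ou characterization \cite{CP25,ou25}. It follows that every graded piece has positive $\alpha_0$-minimal slope.
\item Minimal slope of an extension is bounded below by the minimum over the graded pieces, so $\mu^{\min}_{\alpha_0}(\Lambda^{m-k+1}T_{X'})>0$. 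By openness this persists for $\alpha_\varepsilon$ with $\varepsilon$ small.
\end{enumerate}

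The main obstacle is step 4 of ($\Rightarrow$). One has to pass from positivity of $\mu^{\min}(T_F)$ on a general fibre to positivity of $\mu^{\min}_{\alpha_0}$ for $T_{X'/Y'}$ and its exterior powers on the total space. The degenerate class $\alpha_0$ does not see vertical divisors over $Y'$, so destabilizing quotients supported there have to be controlled. I would handle this with the semistability of tensor and exterior powers of sheaves with respect to movable classes, as in \cite{CP25}, together with a limiting argument in $\varepsilon$. If that fails, the fallback is to reduce the whole statement to the Harder–Narasimhan filtration of $T_X$ and the foliation-theoretic criterion in \cite{CP25}: the maximal destabilizing pieces of positive slope define an algebraic foliation with rationally connected leaves, which should be compared with the MRC quotient.
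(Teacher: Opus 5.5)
Your direction ($\Leftarrow$) is correct, and it takes a more elementary route than the paper. The paper feeds $\mu_{\alpha,\min}(\Lambda^{m-k+1}T^{1,0}X)>0$ through three steps:
\begin{itemize}
\item Proposition~\ref{prop-existence-hermitian} (Hermitian--Yang--Mills flow, after perturbing $\alpha$ to a big class) gives mean-curvature $(m-k)$-positivity;
\item the Bochner vanishing of Lemma~\ref{prop-vanishing} kills the twisted holomorphic forms;
\item the MRC/Ou argument of Proposition~\ref{coro-rationaldimension} concludes.
\end{itemize}
You skip the analysis and write down the destabilizing quotient directly. Since $\mathcal{Q}$ has full rank in $\Lambda^{m-k+1}f^*T_Y$, one has $c_1(\mathcal{Q})=-\binom{p-1}{m-k}f^*K_Y-[D]$ with $D\geq 0$. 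Both terms pair non-positively with any movable class, big or not. You should state explicitly that the effective correction is \emph{subtracted}.

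Direction ($\Rightarrow$) has a genuine gap, and it is not the one you flag. Step 4 asserts that rational connectedness of $F$ gives $\mu_{\omega|_F,\min}(T_F)>0$ for the restriction of an arbitrary K\"ahler class. This is false.
\begin{itemize}
\item Let $F\subset\mathbb{P}^1\times\mathbb{P}^3$ be a smooth hypersurface of bidegree $(1,5)$. This is the blow-up of $\mathbb{P}^3$ along the base curve of a general pencil of quintics, so $F$ is rational.
\item Let $A=\mathcal{O}(1,1)|_F$, which is very ample. Adjunction gives $K_F=\mathcal{O}(-1,1)|_F$ and $K_F\cdot A^2=6$, so $\mu_{A,\min}(T_F)\leq\mu_A(T_F)=-2$.
\item Take $X=F$, $Y$ a point and $k=m=3$. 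Your class $\alpha_0=c_1(A)^2$, and any $\alpha_\varepsilon$, which here is a multiple of it, violates the conclusion.
\end{itemize}
Campana's criterion (the case $k=m$) only provides \emph{some} movable class on $F$ with positive minimal slope, not the restriction of a given K\"ahler class. You would then have to globalize it: find $\gamma$ on $X'$ such that $\gamma|_{F_y}$ is such a class for general $y$. That is a real problem, involving the variation of Harder--Narasimhan filtrations and monodromy over the family. Neither semistability of tensor powers nor a limit in $\varepsilon$ addresses it.

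The vertical divisors you worry about are actually harmless for classes of the form $f'^*[\omega_Y]^p\cdot\gamma$: sheaves pulled back from $Y'$ and divisors not dominating $Y'$ pair to zero with them. Given a good $\gamma$, your restriction-to-the-fibre argument would go through. Since $\rd(X)\geq k$, every graded piece contains a fibre factor, so no induction on the base would even be needed.

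Producing such a fibre-type class is exactly what the paper's proof is organized around:
\begin{itemize}
\item It takes a movable $\alpha_0$ with $\alpha_0\cdot K_X<0$ (by Ou's theorem).
\item The $\alpha_0$-maximal destabilizing subsheaf of $T^{1,0}X$, an algebraic foliation by Campana--P\u{a}un, defines $f$.
\item The relative non-pseudo-effectivity result of Campana--P\u{a}un gives a relative class $\alpha'\in\mov^0(\widehat{X}/\widehat{Y})$ with negative relative canonical degree.
\item Minimality of $d$ gives $\mu_{\alpha',\min}(\ker d\widehat{f})>0$, and Campana's Lemma 5.3 gives $\mu_{\alpha',\min}\geq 0$ for the transverse quotient.
\item The graded pieces with no fibre factor are handled by induction on dimension, Proposition~\ref{prop-key} and $\alpha_t=\alpha'+t\beta'$.
\end{itemize}
Your closing fallback points in this direction but is not carried out, so as written ($\Rightarrow$) is not proved.
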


In particular, we obtain the following characterization.

\begin{cor}\label{coro-characterization}
    For a compact K\"ahler manifold $X$ of complex dimension $m$, the condition $\rd(X)\geq k$ is equivalent to each of the following statements.
    \begin{itemize}
        \item The mean curvature of the tangent bundle is totally $(m-k)$-positive.
        \item For every $m-k+1\leq p\leq m$ and every pseudo-effective line bundle $\mL$,
        \[
        H^0(X,\Omega^{p,0}X\otimes \mL^{-1})=0.
        \]
    \end{itemize}
\end{cor}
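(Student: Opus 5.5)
The plan is to close a cycle of implications among three statements, using Theorem \ref{prop-characterization-rationalconnectedness} as the hub: (A) $\rd(X)\geq k$; (B) the mean curvature of $T^{1,0}X$ is totally $(m-k)$-positive; (C) $H^0(X,\Omega^{p,0}X\otimes\mL^{-1})=0$ for all $m-k+1\leq p\leq m$ and every pseudo-effective $\mL$.

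\textbf{(B)$\Rightarrow$(C).} Apply Lemma \ref{prop-vanishing} with $E=T^{1,0}X$, so that $\Lambda^pE^{-1}=\Omega^{p,0}X$. The only thing to check is the index convention. Total $(m-k)$-positivity corresponds to $(k'-1)$-positivity with $k'=m-k+1$. The lemma then gives the vanishing for all $p$ with $m-k+1\leq p\leq m$, which is exactly the range in (C).

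\textbf{(C)$\Rightarrow$(A).} Repeat the MRC argument from the proof of Proposition \ref{coro-rationaldimension}, tracking the range of $p$. Suppose $\rd(X)\leq k-1$, and let $f:X\dashrightarrow Y$ be the MRC fibration, with base dimension $q=\dim Y\geq m-k+1$.
\begin{itemize}
\item By \cite{GHS}, $Y$ is not uniruled.
\item By \cite{ou25}, $K_Y$ is then pseudo-effective.
\item The pullback $f^*K_Y$ is defined off the indeterminacy locus, which has codimension at least $2$. It therefore extends to a saturated invertible subsheaf $\mL\subset\Omega^{q,0}X$, and $\mL$ is pseudo-effective, being generically the pullback of a pseudo-effective class.
\end{itemize}
The inclusion $\mL\hookrightarrow\Omega^{q,0}X$ is a nonzero section of $\Omega^{q,0}X\otimes\mL^{-1}$ with $q$ in the allowed range, contradicting (C). The one delicate point is pseudo-effectivity of the extension. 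Since the extension is only defined up to codimension-two loci, I would argue via a singular semipositive metric pulled back on the locus where $f$ is regular, and extend it by the Grauert--Remmert / Riemann extension argument.

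\textbf{(A)$\Rightarrow$(B).} Theorem \ref{prop-characterization-rationalconnectedness} supplies a movable class $\alpha$ with $\mu_\alpha(\Lambda^{m-k+1}T^{1,0}X)>0$. I then need to convert slope positivity into total positivity of the mean curvature. For this I would invoke the main result of \cite{LZZ25}, in the form of Proposition \ref{prop-existence-hermitian} referenced in the appendix. It should produce a Gauduchon metric $g$ with $\w_g^{m-1}$ representing $\alpha$, or approximating it within the movable cone, together with a Hermitian metric $H$ on $T^{1,0}X$ such that
\[
\int_X \frak{r}_{m-k+1,g}(T^{1,0}X,H)\,\frac{\w_g^m}{m!}>0 .
\]
The heuristic is that the minimal slope controls the integral of the smallest partial sums of eigenvalues of the mean curvature, via an approximate Hermitian--Einstein metric adapted to the Harder--Narasimhan filtration.

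The main obstacle is this last step. The difficulty is not the definitions but the passage from the positivity of $\mu_{\alpha}$ on the exterior power to the positivity of the integrated $(m-k+1)$-partial trace. I would first try to argue by induction along the $\alpha$-Harder--Narasimhan filtration of $T^{1,0}X$, using approximate Hermitian--Einstein metrics on the graded pieces and gluing them by the standard $\epsilon$-perturbation of the off-diagonal second fundamental forms. Once this step is in place, the cycle (A)$\Rightarrow$(B)$\Rightarrow$(C)$\Rightarrow$(A) closes and the corollary follows.
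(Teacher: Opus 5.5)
Your cycle (A)$\Rightarrow$(B)$\Rightarrow$(C)$\Rightarrow$(A) is exactly how the paper obtains the corollary. It uses Theorem~\ref{prop-characterization-rationalconnectedness} plus Proposition~\ref{prop-existence-hermitian} for (A)$\Rightarrow$(B), Lemma~\ref{prop-vanishing} with $E=T^{1,0}X$ for (B)$\Rightarrow$(C), and the MRC-fibration argument of Proposition~\ref{coro-rationaldimension} for (C)$\Rightarrow$(A). The step you flag as the main obstacle needs no Harder--Narasimhan gluing: Proposition~\ref{prop-existence-hermitian}, via the Hermitian--Yang--Mills flow, already gives pointwise (hence total) $(m-k)$-positivity, provided you read the slope condition as $\mu_{\alpha,\min}(\Lambda^{m-k+1}T^{1,0}X)>0$, which is what that proposition requires and what the proof of the theorem actually produces.
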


Theorem \ref{prop-characterization-rationalconnectedness} was proved by Campana \cite{Campana16} for general orbifold pairs in the case $k=m$. The strategy is a further modification of \cite[Section 5]{Campana16}. One motivation for proving the above results from a differential-geometric viewpoint is to obtain Bochner-type results in terms of partial curvature positivity. It is well known that $k$-semi-positivity of the Ricci curvature implies that all holomorphic forms of degree at least $k$ are parallel. In \cite{Ni,NZ22}, it was shown that positivity of the $k$-scalar curvature implies the vanishing of all holomorphic forms of degree at least $k$; this has been extended to the semipositive case when $k=2$; cf. \cite{Ni25,Tang26}. Here, we obtain a complete result for arbitrary $k$.

\begin{teo}\label{teo-k-scalar}
    Let $(X,g)$ be a compact K\"ahler manifold of dimension $m$ with semi-positive $k$-scalar curvature, where $1\leq k\leq m$. Then for every $p\geq k$:
    \begin{itemize}
        \item[(1)] All holomorphic $p$-forms are parallell;
        \item[(2)] If additionally, $k$-scalar curvature of $g$ is positive at some point, then $h^{p,0}=0$.
    \end{itemize}
\end{teo}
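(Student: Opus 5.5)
Part (1) goes by the Bochner method with a refined Kato-type estimate; part (2) upgrades to vanishing via the partial-positivity results of this appendix. Recall that the $k$-scalar curvature $S_k(x,\Sigma)$ at $x$ along a $k$-dimensional complex subspace $\Sigma\subset T^{1,0}_xX$ is the trace of the Ricci form restricted to $\Sigma$, i.e. $\sum_{i\le k}\mathrm{Ric}(e_i,\bar e_i)$ for a unitary frame of $\Sigma$. Semi-positivity of $S_k$ for all $\Sigma$ is therefore the statement that the sum of the $k$ smallest eigenvalues of $\mathrm{ric}^{1,0}$ is non-negative, namely $\frak{r}_k\ge 0$ pointwise, with $\frak{r}_k$ as in \eqref{function}. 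The key algebraic step is to show that for $p\ge k$ the endomorphism $\mathrm{ric}^{p,0}$ on $\Lambda^{p,0}$ has all eigenvalues at least $\frac{p}{k}\frak{r}_k\ge0$. Indeed, its eigenvalues are sums $r_{j_1}+\dots+r_{j_p}$ over $p$ distinct indices, each such sum is at least $\frak{r}_p$, and $\frak{r}_p\ge \frac pk\frak{r}_k$ because the average of the $p$ smallest eigenvalues dominates the average of the $k$ smallest.

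For (1), let $\psi$ be a holomorphic $p$-form with $p\ge k$. On a compact K\"ahler manifold $\psi$ is $\Delta_{\overline{\partial}}$-harmonic, so the Weitzenb\"ock formula $2\Delta_{\overline{\partial},p,0}=\nabla^t\nabla+\mathrm{ric}^{p,0}$ gives
\[0=\int_X\left(|\nabla\psi|^2_h+h(\mathrm{ric}^{p,0}\psi,\psi)\right)\dvol_h\ge\int_X|\nabla\psi|^2_h\dvol_h .\]
Hence $\nabla\psi=0$ and $\mathrm{ric}^{p,0}\psi\equiv0$ pointwise, which proves that all holomorphic $p$-forms are parallel.

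For (2), suppose $S_k>0$ at a point $x_0$, so $\frak{r}_k(x_0)>0$ and hence $\frak{r}_p(x_0)>0$. Then $\mathrm{ric}^{p,0}_{x_0}$ is positive definite. By (1), $h(\mathrm{ric}^{p,0}\psi,\psi)=0$ everywhere, so $\psi(x_0)=0$. Since $\psi$ is parallel, $|\psi|$ is constant, and therefore $\psi\equiv0$. This gives $h^{p,0}=0$ for all $p\ge k$.

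The main obstacle is the algebraic comparison: one must check that the paper's notion of $k$-scalar curvature (from \cite{Ni,NZ22}) coincides with the minimum over $k$-planes of the Ricci trace, so that semi-positivity is exactly $\frak{r}_k\ge0$. This is a Ky Fan minimum principle for the Hermitian form $\mathrm{Ric}$ on $T^{1,0}_x$. If instead the $k$-scalar curvature is defined as an average of holomorphic sectional curvature over $k$-subspaces, I would first express it via Berger's averaging as $\frac{2}{k(k+1)}\,\mathrm{tr}_\Sigma$ of the Ricci-type term restricted to $\Sigma$. In that case the Bochner term for $\Lambda^{p,0}$ should be replaced by the curvature of $\Lambda^pT^{1,0}$ contracted appropriately, and the positivity should be obtained by averaging over $k$-subspaces as in \cite{NZ22}. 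The alternative route would be to invoke Corollary~\ref{coro-characterization} with total positivity, after the perturbation argument of \cite{LZZ25} turns semi-positivity that is positive somewhere into total positivity.
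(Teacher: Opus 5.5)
\textbf{There is a genuine gap.} Your argument treats the $k$-scalar curvature as the trace of the \emph{ambient} Ricci form over $k$-planes, i.e.\ it assumes $\mathfrak{r}_k\geq 0$. That identification is false. The $k$-scalar curvature of \cite{Ni,NZ22} is the scalar curvature of the restricted tensor $R|_\Sigma$:
\[
S_k(x,\Sigma)=\sum_{i,j=1}^k R(e_i,\bar e_i,e_j,\bar e_j),
\]
which is a multiple of the average of the holomorphic sectional curvature over the unit sphere of $\Sigma$. So $S_1$ is the holomorphic sectional curvature and $S_m$ is the scalar curvature. Completing to a unitary basis,
\[
\sum_{i=1}^k\mathrm{Ric}(e_i,\bar e_i)=S_k(x,\Sigma)+\sum_{i=1}^k\sum_{j=k+1}^m R(e_i,\bar e_i,e_j,\bar e_j),
\]
and the mixed bisectional terms have no sign under $S_k\geq 0$. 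Hence $S_k\geq 0$ gives no lower bound on $\mathrm{ric}^{p,0}$. Neither your integrated Weitzenb\"ock inequality in (1) nor the positive definiteness of $\mathrm{ric}^{p,0}_{x_0}$ in (2) follows. For $k=1$, Hitchin's metrics of positive holomorphic sectional curvature on the Hirzebruch surfaces $F_n$, $n\geq 3$, cannot have $\mathrm{Ric}\geq 0$, because $-K_{F_n}$ is not nef.

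What you have actually proved is the statement the paper calls well known just before the theorem: $k$-semi-positive Ricci curvature forces holomorphic forms of degree $\geq k$ to be parallel. Your fallbacks do not repair this.
\begin{itemize}
\item The zeroth-order Weitzenb\"ock term is the full $\mathrm{ric}^{p,0}$ and cannot be ``replaced''.
\item The Ni--Zheng averaging is a pointwise maximum-principle argument over a $k$-plane adapted to the form at a point. It has no integrated version, and it needs $S_k>0$ everywhere to give vanishing. It does not by itself give parallelism under semi-positivity; the paper notes this was previously settled only for $k=2$.
\item Corollary~\ref{coro-characterization} needs total positivity of the mean curvature of $T^{1,0}X$, again a Ricci-type quantity, and nothing in your proposal extracts it from $S_k\geq 0$. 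Besides, a vanishing theorem can never give (1) when $h^{p,0}\neq 0$, as on a flat torus.
\end{itemize}

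The missing ingredient is structural. The paper invokes \cite[Theorem 1.10]{ZZ25}: under semi-positive $k$-scalar curvature, one of two things happens.
\begin{itemize}
\item Either the rational dimension of $X$ is large enough that Corollary~\ref{coro-characterization} kills all holomorphic $p$-forms with $p\geq k$,
\item or the universal cover splits as $Y_{\mathrm{univ}}\times F$, with $F$ rationally connected and $(Y,g_Y)$ compact with $\mathrm{Ric}(g_Y)\geq 0$.
\end{itemize}
In the second case, the lift of a holomorphic $p$-form comes from the $Y$-factor, since $F$ carries no nonzero holomorphic forms. So the form is pulled back from $Y$, and only there is a Bochner argument like yours (with $r_1\geq 0$) used to get parallelism. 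Your computation is thus the last step of the proof, applied on the Ricci-semipositive factor produced by the splitting, not on $X$ itself.
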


\begin{proof}[Proof of Theorem \ref{teo-k-scalar}]
    Building on structure theorems established in \cite[Theorem 1.10]{ZZ25}, either $\rd(X)\geq m-k$, or the universal cover $(X_{\mathrm{univ}},g_{\mathrm{univ}})$ of $(X,g)$ is the product of a rationally connected manifold $(F,g_F)$ with semi-positive holomorphic sectional curvature and the universal cover $Y_{\mathrm{univ}}$ of a compact K\"ahler manifold $(Y,g_Y)$ with semi-positive Ricci curvature satisfies the following diagram:
    \[
    		\begin{tikzcd}
    			F_{\mathrm{univ}} & X_{\mathrm{univ}}=Y_{\mathrm{univ}}\times F_{\mathrm{univ}} \arrow[l,"\mathrm{pr}_2"'] \arrow[r] \arrow[rd,"\mathrm{pr}_1"'] \arrow[rr,bend left=10,"\mu"] & Y_{\mathrm{univ}}\times F \arrow[r] \arrow[d] & (Y_{\mathrm{univ}}\times F)/\pi_1(Y)\cong X \arrow[d,"f"] \\
    			& & Y_{\mathrm{univ}} \arrow[r,"\pi"] & Y.
    		\end{tikzcd}
    		\]
    Let $\eta$ be a holomorphic $(p,0)$-form with $p\geq k$. In the first case, any holomorphic $p$-form with $p\geq k$ are identically zero by Corollary \ref{coro-characterization}. In the second case, the pull-back $\widetilde{\eta}$ of $\eta$ on $\widetilde{X}$ must be holomorphic forms on $\widetilde{Y}$ because any holomorphic forms on $F$ are identically zero. In particular, $\eta$ is the pull-back of some holomorphic form on $Y$, and so is parallel because $\mathrm{Ric}(g_Y)\geq 0$. The proof is complete.
\end{proof}

It would be interesting to know whether the above Bochner-type result admits a proof avoiding MRC fibrations. We expect that such a proof should come from a differential-geometric approach.


\subsection{Basic definitions}
\subsubsection{Rational dimension}
Let $X$ be a compact K\"ahler manifold. We say that
\begin{enumerate}
	\item $X$ is uniruled if for every general point $x\in X$, there exists a rational curve containing $x$;
	\item $X$ is rationally connected if any two points can be joined by a (chain of) rational curve.
\end{enumerate}
A rationally connected manifold $X$ must be projective and shares important properties with Fano manifolds for example: Simply connectedness; admitting no nontrivial holomorphic forms. In this sense, rationally connected manifolds can be regarded as ``Fano-like." 

\vspace{0.1cm}

A maximally rationally connected (for short MRC) fibration $f:X\dashrightarrow Y$ of $X$ is an almost holomorphic map (that is meromorphic map whose general fibres are compact) from $X$ to a compact K\"ahler manifold $Y$ such that
\begin{itemize}
	\item General fibres are rationally connected;
	\item There is no horizontal rational curve passing through a general point in $X$.
\end{itemize}
The existence of a MRC fibration $f:X\dashrightarrow Z$ follows from the existence of a quotient map for covering families; see for instance \cite[Theorem 2.6]{Cam04}, where the quotient is defined via the equivalence relation that two points are equivalent if they can be joined by a rational curve.

\begin{defi}[Rational dimension]
	The rational dimension $\rd(X)$ of $X$ is defined by 
	$$\rd(X):=\dim X-\dim Y.$$
	In particular, $\rd(X)\geq1$ when $X$ is uniruled and $\rd(X)=n$ when $X$ is rationally connected.  	
\end{defi}

\subsubsection{Movable cone}
 The movable cone can be defined in the non-projective context (see e.g. \cite{ou25,CP25}).
\begin{defi}[The movable cone, cf. {\cite[Definition 4.2]{ou25}}]
	Let $X$ be a compact complex manifold of dimension $m$. The movable cone $\mov(X)\subset H_A^{m-1,m-1}(X,\R)$ is the closed cone generated by $[\w^{m-1}]$ for all Gauduchon metric $\w$ on $X$. A class $\alpha$ in $\mov(X)$ is called movable, and big if lies in the interior $\mov^0(X)$ of $\mov(X)$. $\mov^0(X)$ coincides with the Gauduchon cone.
\end{defi}
For any torsion-free sheaf $\mE$, the slope $\mu_\alpha(\mE)$ is defined by
$\mu_\alpha(\mE)=\frac{c_1(\mE)\cdot\alpha}{\rank \mE}.$ We also define
$$\mu_{\alpha,\max}(\mE):=\sup\{\mu_\alpha(\mF):0\subsetneq\mF\subset\mE\}$$
and
$$\mu_{\alpha,\min}(\mE):=\inf\{\mu_\alpha(\mQ):0\neq \mQ\text{ is a torsion-free quotient of } \mE\}.$$ 
As discussed in \cite[Lemma 4.3]{ou25}, $\mu_{\alpha,\max}(\mE)$ and $\mu_{\alpha,\min}(\mE)$ can be achieved for some saturated subsheaf $\mF$ and torsion-free quotient $\mQ$ and in particular are finite real numbers.  Then the Harder-Narasimhan filtration with respect to $\alpha$ exists. Suppose that $\mE$ is a torsion-free sheaf of $\rank$ $r$ on a compact complex manifold $X$. For any movable class $\alpha$, $\mE$ admits a unique filtration of saturated subsheaves
$$0=\mE_{0}\subsetneq \mE_{1}\subsetneq\cdots\subsetneq\mE_{l}=\mE$$
such that each quotient sheaf $\mQ_{k}=\mE_{k}/\mE_{k-1}$ is torsion-free and $\mu_{\alpha}(\mQ_k)>\mu_{\alpha}(\mQ_{k+1})$, and $\mQ_k$ is $\alpha$-semistable. Such a filtration is called the Harder-Narasimhan (for short HN) filtration of $\mE$ with respect to $\alpha$.
\begin{defi}
	We define the HN type of $\mE$ with respect to $\alpha$ by
	$$\vec{\mu}_\alpha(\mE)=(\mu_{\alpha,1}(\mE),\cdots,\mu_{\alpha,r}(\mE))$$
	where $\mu_{\alpha,i}(\mE):=\mu_\alpha(\mQ_k)$ for $\rank\mQ_{k-1}+1\leq i \leq \rank \mQ_k$.	In particular, $\mu_{\alpha,1}(\mE)=\mu_{\alpha,\max}(\mE)$, $\mu_{\alpha,r}(\mE)=\mu_{\alpha,\min}(\mE)$ and $\mu_{\alpha,1}(\mE)\geq \cdots\geq \mu_{\alpha,r}(\mE)$.
\end{defi}

We have the following statement (cf. \cite[Appendix]{CP25}):

\begin{prop}\label{prop-slope-computation}
	Given two torsion-free sheaves $\mE,\mF$ on a compact complex manifold $X$ and $\alpha$ be a big movable class, then for any $1\leq k \leq \rank\mE$,
	$$\mu_{\alpha,\min}((\Lambda^{k}\mE)^{\vee\vee})=\mu_{\alpha,r-k+1}(\mE)+\cdots+\mu_{\alpha,r}(\mE),$$
	and
	$$\mu_{\alpha,\min}((\mE\otimes\mF)^{\vee\vee})=\mu_{\alpha,\min}(\mE)+\mu_{\alpha,\min}(\mF).$$
\end{prop}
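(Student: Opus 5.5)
We prove both identities by reducing, via the Harder--Narasimhan filtration with respect to $\alpha$, to the case of $\alpha$-semistable sheaves. The reduction rests on two standard facts, which we take from \cite[Appendix]{CP25} and \cite{ou25}. First, for a big movable class $\alpha$, reflexive hulls of tensor products of $\alpha$-semistable torsion-free sheaves are again $\alpha$-semistable. Second, slopes are additive: $c_1$ is additive on tensor products, and $c_1$ of a torsion-free sheaf agrees with that of its reflexive hull, because they differ in codimension $\geq 2$ and $\alpha$ is an $(m-1,m-1)$-class. We first prove the tensor identity and then deduce the exterior power identity from it by the same mechanism.

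\textbf{Tensor identity.} Let $\mQ_{\mE}$ be the last HN quotient of $\mE$ and $\mQ_{\mF}$ the last one of $\mF$. Their slopes are $\mu_{\alpha,\min}(\mE)$ and $\mu_{\alpha,\min}(\mF)$.

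\emph{Upper bound.} The sheaf $(\mE\otimes\mF)^{\vee\vee}$ surjects generically onto $(\mQ_{\mE}\otimes\mQ_{\mF})$ modulo torsion. Hence
\[
\mu_{\alpha,\min}((\mE\otimes\mF)^{\vee\vee})\leq \mu_\alpha(\mQ_{\mE}\otimes\mQ_{\mF})=\mu_{\alpha,\min}(\mE)+\mu_{\alpha,\min}(\mF).
\]

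\emph{Lower bound.} Tensor the two HN filtrations together. This gives a filtration of $(\mE\otimes\mF)^{\vee\vee}$, after saturation, whose graded pieces are generically isomorphic to the sheaves $(\mQ_i\otimes\mQ'_j)^{\vee\vee}$. By the semistability fact these pieces are $\alpha$-semistable, with slope $\mu_\alpha(\mQ_i)+\mu_\alpha(\mQ'_j)\geq\mu_{\alpha,\min}(\mE)+\mu_{\alpha,\min}(\mF)$. Now take any torsion-free quotient $\mQ$ of $(\mE\otimes\mF)^{\vee\vee}$. It has a filtration induced from the one above, whose graded pieces are quotients of semistable sheaves of slope at least the bound. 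So each piece has slope at least the bound, and summing gives $\mu_\alpha(\mQ)$ at least the bound. We make this last step precise as the usual lemma that $\mu_{\min}$ is bounded below by the minimum of $\mu_{\min}$ over the graded pieces of any filtration.

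\textbf{Exterior power identity.} Write the HN type as $\mu_{\alpha,1}\geq\dots\geq\mu_{\alpha,r}$.

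\emph{Upper bound.} Suppose $\mE$ has a torsion-free quotient $\mE\to\mQ$ of rank exactly $k$ whose slope is $(\mu_{\alpha,r-k+1}+\dots+\mu_{\alpha,r})/k$. Then $\det\mQ$ is a rank-one quotient of $(\Lambda^k\mE)^{\vee\vee}$, generically, with slope equal to that sum. This already gives the upper bound. In general the rank $k$ need not fall on an HN break. In that case we take $\mQ=\mE/\mE_j$ together with a further quotient chosen so that the ranks add up to $k$. Concretely, we write $k=\rank(\mE/\mE_j)+s$ and use the quotient
\[
\Lambda^k\mE\to\Lambda^{s}\mQ_j\otimes\det(\mE/\mE_j),
\]
taken generically. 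Its minimal slope equals $\mu_{\alpha,\min}((\Lambda^s\mQ_j)^{\vee\vee})+\mu_\alpha(\det(\mE/\mE_j))$.

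\emph{Semistable building block.} Since $\mQ_j$ is semistable, $(\Lambda^s\mQ_j)^{\vee\vee}$ is semistable, being a direct summand of $\mQ_j^{\otimes s}$ in characteristic zero. Its slope is $s\,\mu_\alpha(\mQ_j)$, and these values add up to the required sum.

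\emph{Lower bound.} The HN filtration induces a filtration of $\Lambda^k\mE$ with graded pieces $\bigotimes_i\Lambda^{a_i}\mQ_i$, where $\sum a_i=k$. By the semistability facts each piece is semistable after taking reflexive hulls, with slope $\sum a_i\mu_\alpha(\mQ_i)$. The smallest such value is attained by loading the $a_i$ onto the last quotients, and equals $\mu_{\alpha,r-k+1}+\dots+\mu_{\alpha,r}$. The same filtration lemma as before then gives the lower bound.

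\textbf{Main obstacle.} The main difficulty is the semistability of tensor products and reflexive hulls in this non-projective, movable-class setting. Over a projective variety with an ample polarisation this comes from Mehta--Ramanathan restriction or from Hermite--Einstein metrics. Here we would first try Toma's and Ou's approach: realise a big movable class as the $(m-1)$-th power of a Gauduchon metric, that is, as a point of the Gauduchon cone. With respect to such a metric, stable reflexive sheaves admit admissible Hermite--Einstein metrics (Bando--Siu in the Gauduchon version, Li--Yau). Tensor products and exterior powers of these metrics are again Hermite--Einstein, which gives polystability. For merely big movable classes we use that the class is interior, which is exactly why the statement assumes $\alpha$ big.
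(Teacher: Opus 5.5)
Your argument is correct and consistent with the paper, which gives no proof of its own here and simply defers to \cite[Appendix]{CP25}. Your derivation of both identities is the standard one: HN filtrations, additivity of slopes under tensor products and reflexive hulls, and the filtration lemma for $\mu_{\alpha,\min}$, combined with the one genuinely nontrivial input that you also take from the literature, namely that reflexive hulls of tensor products of $\alpha$-semistable sheaves are $\alpha$-semistable for big movable (i.e.\ Gauduchon) classes.

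One point needs careful wording in the lower bounds. Generically isomorphic reflexive sheaves can differ along a divisor, so a mere generic isomorphism of the graded pieces with $(\mQ_i\otimes\mQ'_j)^{\vee\vee}$ does not give the slope bound. Instead, use the natural generically surjective maps from $\bigoplus_{i+j=t}\mQ_i\otimes\mQ'_j$ (resp.\ $\bigotimes_i\Lambda^{a_i}\mQ_i$) onto the saturated graded pieces. With maps in this direction, your slope comparison goes through as written.
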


\subsubsection{Mean curvature}
Let $(X,g)$ be a Hermitian manifold of complex dimension $m$ and $(E,H)$ be a Hermitian vector bundle of $\rank$ $r$ on $X$, the mean curvature $\tr_gR^H\in \End(E)$ of $(E,H)$ with respect to $g$ is defined by
$$(\tr_gR^H)_{k}^l:=g^{i\bar{j}}R_{i\bar{j}k}^l,$$
where $R^H$ is the Chern curvature tensor of $h$.
\begin{defi}[Mean curvature]
	Let $E$ be a holomorphic vector bundle of $\rank r$ on $X$. We say that
		 $E$ is mean curvature  $(p-1)$-positive if there exists a Hermitian metric $H$ on $E$ and a Guduchon metric $g$ on $X$ such that
		$$\frak{r}_{p,g}(E,H):=\lambda_{r-p+1}+\cdots+\lambda_{r}$$ of $\tr_gR^H$ is positive over $X$, where $\lambda_{r-p+1}\geq \cdots \geq\lambda_{r}$ denote the smallest $p$ eigenvalues of $\tr_gR^H$; $E$ is total mean curvature  $(p-1)$-positive if $$\int_X\frak{r}_{p,g}(E,H)\cdot\frac{\w_g^m}{m!}>0.$$
\end{defi}

The implication $(2)$ is a direct consequence of the following statement.

\begin{prop}\label{prop-existence-hermitian}
   Let $E$ be a holomorphic vector bundle of rank $r$ on a compact complex manifold $X$ of complex dimension $m$ and $\alpha$ be a movable class. If $\mu_{\alpha,\min}(\Lambda^{p}E)>0$ for some $0<p\leq r$, then $E$ is mean curvature $(p-1)$-positive. 
\end{prop}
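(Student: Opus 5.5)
The plan is to reduce to a metric statement on a single Gauduchon metric and then use approximate Hermitian--Einstein (``approximate critical'') structures adapted to the Harder--Narasimhan filtration of $E$.

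\textbf{Step 1: reduce to a big movable class.} Since $\mu_{\alpha,\min}(\Lambda^pE)>0$ is a strict inequality, I first replace $\alpha$ by a big movable class $\alpha'=\alpha+\epsilon[\omega_0^{m-1}]$, with $\omega_0$ a fixed Gauduchon metric and $\epsilon>0$ small. I then check that $\mu_{\alpha',\min}(\Lambda^pE)>0$ still holds. The justification is that the torsion-free quotients realizing, or nearly realizing, the minimal slope have first Chern classes in a bounded family; this is the boundedness underlying \cite[Lemma 4.3]{ou25}. Hence $\alpha\mapsto\mu_{\alpha,\min}$ is continuous, or at least lower semicontinuous, along the segment. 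Since $\mov^0(X)$ is the Gauduchon cone, $\alpha'=[\omega_g^{m-1}]$ for some Gauduchon metric $g$, and this $g$ is the metric I fix.

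\textbf{Step 2: translate into HN slopes of $E$.} By Proposition \ref{prop-slope-computation} applied to the big class $\alpha'$,
$$\mu_{\alpha',r-p+1}(E)+\cdots+\mu_{\alpha',r}(E)=\mu_{\alpha',\min}((\Lambda^pE)^{\vee\vee})>0 .$$
Call this sum $\delta>0$. The Chern--Weil normalization $\deg_g=\int\frac{\sqrt{-1}}{2\pi}\tr R\wedge\omega_g^{m-1}$ turns slopes into averages of eigenvalues of $\frac{\sqrt{-1}}{2\pi}\tr_gR^H$, up to the fixed positive constant $m/\vol_g(X)$. So the target is the following: for every $\eta>0$, find $H$ such that the eigenvalues $\lambda_1\ge\dots\ge\lambda_r$ of the (normalized) mean curvature satisfy
$$\sup_X|\lambda_i-c\,\mu_{\alpha',i}(E)|<\eta .$$
Taking $\eta<c\delta/p$ then gives $\frak{r}_{p,g}(E,H)>0$ pointwise, which is mean curvature $(p-1)$-positivity.

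\textbf{Step 3: approximate critical Hermitian structures.} This is the main obstacle. On a compact Gauduchon manifold one has the Kobayashi--Hitchin and Uhlenbeck--Yau/Li--Yau theory, extended to HN filtrations by saturated (possibly singular) subsheaves. This yields metrics $H_\eta$ whose mean curvature is close to the ``HN projection'' $\sum_k c\,\mu_{\alpha'}(\mQ_k)\pi_k$. I would invoke this in the form given in \cite{LZZ25}, namely approximate critical Hermitian structures for torsion-free filtrations on Gauduchon manifolds. The delicate point is that the classical statements give only $L^1$ (or $L^q$) closeness, while pointwise positivity requires $C^0$ control.

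To upgrade, I would first try running the Donaldson heat flow $H_t$. It makes $|\tr_gR^{H_t}-\Phi_\infty|$ small in $L^1$, where $\Phi_\infty$ denotes the HN projection. Then $|\tr_gR^{H_t}|$ is a subsolution of a heat-type inequality, so the maximum principle, combined with parabolic mean-value estimates, converts $L^1$-smallness into $C^0$-smallness of $\tr_gR^{H_t}$ minus a smooth endomorphism with the right spectrum. It is essential here that $g$ is Gauduchon, so that $\Delta_g$ on functions has the right adjoint properties. Failing that, I would settle for $L^1$-smallness and correct by a scalar conformal change $H\mapsto e^{-f}H$. This shifts all eigenvalues by $\Delta_gf$, and since $g$ is Gauduchon one can solve $\Delta_gf=\bar\phi-\phi$ for the function $\phi=\frak{r}_{p,g}(E,H)/p$. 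This makes $\frak{r}_{p,g}$ equal to its $\omega_g^m$-average, which is close to $c\delta>0$; the $L^1$ closeness suffices for this step. This conformal trick in fact only needs $\int_X\frak{r}_{p,g}\,\omega_g^m>0$, so I expect it to be the cleanest route to finish the argument.
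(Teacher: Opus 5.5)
Your argument is correct, provided you finish with the conformal-change branch of Step~3, and that finish differs genuinely from the paper's. After reducing to a big class $\alpha=[\omega_g^{m-1}]$, the paper runs the Hermitian--Yang--Mills flow $H(t)$ on $E$ and notes that $\Lambda^pH(t)$ is the flow on $\Lambda^pE$. It then invokes a uniform ($C^0$) theorem \cite{CLZZ26,LZZ25}: $\min_X$ of the smallest eigenvalue of $\tr_gR^{\Lambda^pH(t)}$ increases to $\mu_{\alpha,\min}(\Lambda^pE)$. Since that eigenvalue is exactly $\mathfrak{r}_{p,g}(E,H(t))$, pointwise positivity is immediate, and Proposition~\ref{prop-slope-computation} is never used.

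You need only integral information. Three ingredients give $\int_X\mathfrak{r}_{p,g}(E,H)\,\omega_g^m>0$: an $L^1$-approximate critical structure $H$ on $E$, Proposition~\ref{prop-slope-computation}, and the fact that the sum of the $p$ smallest eigenvalues of a Hermitian endomorphism is $p$-Lipschitz in operator norm (Weyl). Then $H\mapsto e^{-f}H$ shifts every eigenvalue by the same function $g^{i\bar j}\partial_i\partial_{\bar j}f$. Because $\partial\bar\partial\omega_g^{m-1}=0$, this operator has kernel the constants and index zero, so its range is exactly the functions with zero $\omega_g^m$-integral. Hence $\mathfrak{r}_{p,g}$ can be made essentially constant and positive.

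The paper's route is a one-liner but rests on the strong uniform convergence of the extremal eigenvalue along the flow. Yours uses weaker analytic input. As a by-product, it shows by an elementary argument that total and pointwise mean curvature $(p-1)$-positivity coincide for a fixed Gauduchon metric.

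Three things to fix.

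First, drop the first branch of Step~3 and the uniform target of Step~2. The parabolic mean-value argument upgrades $L^1$ to $C^0$ only when the limit is a constant multiple of the identity (the semistable case), because then $|\tr_gR^{H_t}-\lambda\,\mathrm{Id}|$ is a subsolution. The HN projection is not parallel and is not even defined across the singular set of the filtration, so no such subsolution is available. Uniform closeness of all eigenvalues to the HN slopes is neither established by your sketch nor needed.

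Second, $\mathfrak{r}_{p,g}(E,H)$ is only Lipschitz, so to keep the new metric smooth, replace $\phi=\mathfrak{r}_{p,g}/p$ by a smooth $\phi_\epsilon$ with $|\phi-\phi_\epsilon|<\epsilon$. Solve $g^{i\bar j}\partial_i\partial_{\bar j}f=\bar\phi_\epsilon-\phi_\epsilon$, where $\bar\phi_\epsilon$ is the $\omega_g^m$-average of $\phi_\epsilon$. Then $\mathfrak{r}_{p,g}(E,e^{-f}H)\geq p(\bar\phi-2\epsilon)>0$.

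Third, in Step~1 the boundedness/continuity argument for $\alpha\mapsto\mu_{\alpha,\min}$ is unnecessary. Linearity already gives $\mu_{\alpha+\epsilon\beta}(\mQ)=\mu_\alpha(\mQ)+\epsilon\mu_\beta(\mQ)\geq\mu_{\alpha,\min}(\Lambda^pE)+\epsilon\,\mu_{\beta,\min}(\Lambda^pE)$ for every torsion-free quotient $\mQ$ of $\Lambda^pE$. Together with $\mu_{\beta,\min}(\Lambda^pE)>-\infty$ for big $\beta$, this suffices, exactly as in the paper.
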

\begin{proof}
	Let $\beta$ be some fixed big movable class.
	For any quotient torsion-free sheaf $\mQ$ of $\Lambda^p(E)$,
	\begin{equation}
		\mu_{\alpha+\epsilon \beta}(\mQ)=\mu_{\alpha}(\mQ)+\epsilon\mu_\beta(\mQ)\geq \mu_{\alpha,\min}(\mQ)+\epsilon \mu_{\beta,\min}(\mQ)>0,
	\end{equation}
	so $\mu_{\alpha+\epsilon\beta,\min}(\Lambda^{p}E)>0$ for sufficiently small $\epsilon$ and we may assume that $\alpha$ is big.
	
	Let $g$ be a Gauduchon metric such that $\alpha=[\w_g^{m-1}]$ and $H(t)$ be the solution of the Hermitian-Yang-Mills flow on $E$ with respect to $g$, then $\Lambda^{p}H(t)$ is the solution of the Hermitian-Yang-Mills flow on $\Lambda^{p}E$. It follows from \cite[Theorem 1.1]{CLZZ26} and \cite[Theorem 1.3]{LZZ25} that the smallest $p$ eigenvalue of $\tr_gR^{\Lambda^{p}H(t)}$ increases to $\mu_{\alpha,\min}(\Lambda^{p} E)$. Thus, there exists some Hermitian metric $H$ on $E$ such that $\tr_gR^{\Lambda^{p}H}$ is positive definite and so $\tr_g R^H$ is $(p-1)$-positive.
\end{proof}

\subsection{The slope positivity}
\subsubsection{Relative movable cone}Let us first introduce the definition of relative Movable class. Let $f:X\rightarrow Z$ be a fibration (i.e. a surjective holomorphic map with connected fibers) between two compact complex manifolds and denote $m:=\dim X$ and $n:=\dim Z$. There is a natural surjective direct image map: $f_*:H_A^{m-1,m-1}(X,\R)\rightarrow H_A^{n-1,n-1}(Z,\R)$ dual to the inverse image map $f^*:H_{BC}^{1,1}(Z,\R)\rightarrow H_{BC}^{1,1}(X,\R)$. The kernel $H_A^{m-1,m-1}(X/Z,\R)$ of $f_*$ is the orthogonal of the image of $f^*$.

\begin{defi}
    The relative movable cone $\mov(X/Z)$ is defined by $\mov(X)\cap H_A^{m-1,m-1}(X/Z,\R)$ and denote its interior by $\mov^0(X/Z)$. 
\end{defi}

The main content is establishing the K\"ahler version of \cite[Proposition 3.6]{Campana16}.

\begin{prop}\label{prop-key}
    Let $f:X\rightarrow Z$ be a fibration between two compact K\"ahler manifolds and denote $m:=\dim X$ and $d:=\dim X-\dim Z$. Let $\w_X,\w_Z$ be K\"ahler classes on $X$ and $Z$, respectively. Let $F:=\cup_{s\in S}F_s$ be an effective reduced divisor on $X$. Let $K\subset S$ such that $f(F_k)=E_k$ is a divisor of $Z$ and $f^{-1}(E_k)\cap F\subsetneq f^{-1}(E_k)$. Suppose that $F$ is partially supported on the fibers of $f$, i.e., for any $s\in S\setminus K$, $E_s$ is $f$-exceptional, then the following statements hold.
    \begin{enumerate}
        \item[(1)]   $M_{kt}:=[F_k]\cdot[F_t]\cdot [\w_X]^{d-1}\cdot[f^*\w_Z]^{m-d-1}$ ($k,t\in K$) is a strictly negative matrix and moreover $M_{ts}\geq 0$ if $t\neq s$.
        \item[(2)] Let $\beta\in\mov(Z)$. There exists $\beta'\in\mov(X)$ such that: $f_*(\beta')=\beta$, and moreover: $\beta'\cdot [F_s]=0, \forall s\in S$.
    \end{enumerate}
\end{prop}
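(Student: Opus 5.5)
The plan is to follow Campana's proof of \cite[Proposition 3.6]{Campana16}. Wherever Campana uses ample complete-intersection curves, I will use Kähler classes and the Hodge--Riemann bilinear relations for Kähler (and semipositive) classes.

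\textbf{Part (1), sign pattern.} Set $\Theta:=[\w_X]^{d-1}\cdot[f^*\w_Z]^{m-d-1}$, a class of bidegree $(m-2,m-2)$, and $Q(a,b):=a\cdot b\cdot\Theta$ on $H^{1,1}$. For $t\neq s$, the distinct prime divisors $F_t,F_s$ intersect properly, so $[F_t]\cdot[F_s]$ is represented by an effective cycle. Its pairing with the product of semipositive forms $\w_X^{d-1}\wedge f^*\w_Z^{m-d-1}$ is therefore $\geq0$. This gives $M_{ts}\ge0$.

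\textbf{Part (1), negative definiteness.} This is a Zariski-lemma type statement, and I will prove it by a Hodge index argument. First I perturb $\w_Z$ into a Kähler class and $\w_X$ into $\w_X+\epsilon f^*\w_Z$. The mixed Hodge--Riemann relations (Dinh--Nguyen) then apply to $\Theta_\epsilon$: $Q_\epsilon$ has signature $(1,h^{1,1}-1)$ and is negative definite on the $Q_\epsilon$-orthogonal of $[\w_X]$. Passing to the limit $\epsilon\to0$ gives $Q(D,D)\le0$ for every $\R$-divisor $D=\sum_{k\in K}a_kF_k$, because each $F_k$ is vertical over the divisor $E_k$. The verticality is used as follows. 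Since $f_*(F_k\cdot\w_X^{d-1})$ is a multiple of $[E_k]$, I can compare $D$ with the components of $f^*E_k$. Because $f^{-1}(E_k)\not\subset F$, there is a component $G$ of $f^*E_k$ that meets $F$ and is not among the $F_k$. This component plays the role of the ``missing fibre component'' in Zariski's lemma. The equality case of Hodge--Riemann would force $D$ to be proportional to a combination of full fibres $f^*E_k$, and the presence of $G$ makes this impossible. Hence $Q(D,D)<0$ for $D\neq0$, i.e. $M$ is negative definite.

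\textbf{Part (2).} Write $\beta$ as a limit of Gauduchon classes $[\w_{Z,j}^{n-1}]$, where $n=m-d$. I take as first lift $\beta'_0:=c\,[\w_X]^{d}\cdot f^*\beta$, normalized so that $f_*\beta'_0=\beta$ (since $f_*[\w_X]^d$ is a positive constant). The class $\beta'_0$ is movable, being a limit of products of a Kähler power with pulled-back Gauduchon powers. For $s\in S\setminus K$, $E_s$ is $f$-exceptional, so $f_*$ kills $[F_s]\cdot[\w_X]^d$ and $\beta'_0\cdot[F_s]=0$ automatically. For $k\in K$, the vector $b_k:=\beta'_0\cdot[F_k]$ satisfies $b_k\ge0$. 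To kill these intersections, I correct by relative classes $\theta_t\in H_A^{m-1,m-1}(X/Z,\R)$ built from $[F_t]\cdot\Theta$, corrected by a pull-back term so that $f_*\theta_t=0$. I then solve $Mx=-b$. By part (1), $-M$ is a negative-definite-type M-matrix with nonnegative off-diagonal entries on $M$, so $x=-M^{-1}b$ has nonnegative entries. Finally set $\beta':=\beta'_0+\sum x_t\theta_t$; then $f_*\beta'=\beta$ and $\beta'\cdot[F_s]=0$ for all $s\in S$.

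The main obstacle is \emph{movability of $\beta'$}. Adding $x_t[F_t]\cdot\Theta$ with $x_t\ge0$ need not stay in $\mov(X)$. I will first try Campana's device: show $\beta'$ is a limit of $[\w_\delta^{m-1}]$ for Gauduchon metrics $\w_\delta$ on blow-ups, or, via the duality $\mov(X)^\vee=$ pseudo-effective divisor classes (the Kähler form of \cite{ou25}), check that $\beta'\cdot L\ge0$ for every pseudo-effective $L$. Split $L$ via its divisorial Zariski decomposition into the vertical part supported on $F$, which pairs to zero by construction, and the rest, which pairs nonnegatively with $\beta'_0$ and with the correction terms. A secondary difficulty is justifying the degenerate Hodge--Riemann limit in (1) in the Kähler, non-projective setting.
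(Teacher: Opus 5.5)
The genuine gap is the strict negativity in Part (1). Your Hodge--Riemann argument uses only Hodge theory on $X$, the verticality of the $F_k$, and a component $G$ of $f^*E_k$ outside $F$. It never uses that the fibres of $f$ are connected, and without that the conclusion is false. Here is an example. Let $Z'=\mathbb{P}^1\times\mathbb{P}^1\to\mathbb{P}^2$ be the double cover branched along a conic, let $X=Z'\times C$ with $C$ a curve, and let $f$ be the composite, so $m=3$ and $d=1$. Let $E$ be a line tangent to the conic. Then $f^{-1}(E)=F_1\cup F_2$ with $F_i=\ell_i\times C$, where $\ell_1,\ell_2$ are rulings meeting in a point. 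Take $F=F_1$ and $G=F_2$. All inputs of your argument are present: Hodge--Riemann holds on $X$, $F_1$ is vertical, and $G$ lies outside $F$ and meets it. Yet $M_{11}=[F_1]^2\cdot[f^*\w_Z]=0$, because $\ell_1^2=0$.

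So the ``equality case of Hodge--Riemann'' you invoke is not available. The limit form $Q$ is degenerate, and $Q(D,D)=0$ gives at most cohomological information. It never forces $D$ to be, as a divisor, proportional to a combination of the $f^*E_k$. Your semi-negativity step is also misaimed: $Q([F_k],[\w_X])>0$, so $D$ does not lie in the $Q$-orthogonal of $[\w_X]$. You would have to argue with the isotropic class $[f^*\w_Z]$ instead.

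The missing idea is the Zariski-lemma argument used in the paper. First, $M_{kt}=0$ when $E_k\neq E_t$, so you may fix $E$ and write $f^*E=\sum_{j\in J}a_jF_j$ over all its components. Since $f(F_i)\subset E$ has dimension $m-d-1$, we get $(f|_{F_i})^*(c_1(\mathcal{O}(E))\wedge\w_Z^{m-d-1})=0$, hence $\sum_j a_jM_{ij}=0$. Together with $M_{ij}\geq0$ for $i\neq j$, this gives
\[
-\sum_{i,j\in J}x_ix_jM_{ij}=\frac12\sum_{i\neq j}M_{ij}a_ia_j\Bigl(\frac{x_i}{a_i}-\frac{x_j}{a_j}\Bigr)^2 .
\]
Connectedness of $X_z$ for general $z\in E$ makes the graph with edges $\{M_{ij}>0\}$ on the components dominating $E$ connected. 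So the kernel on these components is spanned by $(a_j)_j$. The $K$-block is then negative definite as soon as some dominating component is not among the $F_k$. Note that your $G$ must dominate $E_k$: a component of $f^{-1}(E_k)$ whose image has dimension $\leq m-d-2$ has zero row and column in $M$.

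Your Part (2) is the paper's construction. You lift to $[\w_X]^d\cdot f^*\beta$, add nonnegative corrections by $[F_k]\cdot[\w_X]^{d-1}\cdot[f^*\w_Z]^{m-d-1}$ obtained by inverting $-M$, and test movability on divisorial Zariski decompositions. Two remarks on it:
\begin{enumerate}
\item No pull-back correction is needed. $f_*([F_k]\cdot[\w_X]^{d-1})$ is a closed $(0,0)$-current supported on $E_k$, hence zero. For the same reason, your assertion that it is a multiple of $[E_k]$ is off by one degree.
\item The inequality $P(\alpha)\cdot[F_k]\cdot[\w_X]^{d-1}\cdot[f^*\w_Z]^{m-d-1}\geq0$ needs Boucksom's regularization of the modified nef part (currents $T\geq-\epsilon\w_X$ with analytic singularities in codimension $\geq2$). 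You leave this unjustified.
\end{enumerate}
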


\begin{proof}
    Since
\[
T_{kt}:=f_*\big([F_k]\cdot [F_t]\cdot [\omega_X]^{d-1}\big)
\]
is a closed \((1,1)\)-current supported in \(E_k\cap E_t\), it vanishes if
\(E_k\ne E_t\). Hence \(M_{kt}=0\) unless \(E_k=E_t\). Therefore, to show the negativity of $(M_{kt})_{k,t\in K}$, it suffices to consider the case where $f(F_k)=E_k=E$ is the same $E$, for all $k\in K$. There exist the index $J$ and some $a_j>0$ ($j\in J$) such that $f^*E=\sum\limits_{j\in J} a_j F_j$ and $K\subset J$. Set $$M_{ij}:=[F_i]\cdot [F_j]\cdot[\w_X]^{d-1}\cdot [f^*\w_Z]^{m-d-1}.$$   Note that
    \begin{align*}
    \sum\limits_{j\in J}a_j M_{ij}&=[F_i]\cdot [f^*E]\cdot[\w_X]^{d-1}\cdot[f^*\w_Z]^{m-d-1}\\
    &=\int_{F_{i,\text{reg}}}(\w_X|_{F_{i,reg}})^{d-1}\wedge (f|_{F_{i,\text{reg}}})^*(c_1(\mathcal{O}(E),h)\wedge \w_Z^{m-d-1})\\
    &=0
    \end{align*}
    for some Hermitian metric $h$ of $\mathcal{O}(E)$ because $(f|_{F_{k,\text{reg}}})^*(c_1(\mathcal{O}(E),h)\w_Z^{m-d-1})=0$. Furthermore, for any $i\neq j$, $[F_i]\cdot[F_j]=m_p[C_{ijp}]$ for some $m_p\in\mathbb{N}^*$, where $C_{ijp}$ is the irrducible component of $F_i\cap F_j$, thus $M_{ij}\geq 0$ for any $i\neq j$ and $M_{ij=0}$ if and only if $\dim f(F_i\cap F_j)\leq m-d-2$. 
    Then for any $\{x_j\}\in \R^{|J|}$, we have
    $$-\sum\limits_{i,j\in J} x_i x_j M_{ij}=\sum\limits_{i\in J}a_i (\frac{x_i}{a_i})^2(\sum\limits_{j\in J}a_i M_{ij})-\sum\limits_{i,j\in J} x_i x_j M_{ij}=\frac{1}{2}\sum\limits_{i,j\in J, i\neq j}M_{ij}a_ia_j (\frac{x_i}{a_i}-\frac{x_j}{a_j})^2\geq 0$$
    and so $(M_{ij})_{i,j\in J}$ is a semi-negative matrix. We claim that the graph \(\Gamma_J\), whose vertices are \(J\) and whose edges are given by
\(M_{ij}>0\), is connected, which shall complete the proof of (1). It follows that
\[
-\sum_{i,j\in J}x_ix_jM_{ij}
=
\frac12\sum_{i\ne j}M_{ij}a_ia_j
\left(\frac{x_i}{a_i}-\frac{x_j}{a_j}\right)^2\ge 0.
\]
Since \(\Gamma_J\) is connected, equality holds only when \(\frac{x_i}{a_i}\) is constant.
Thus the kernel of \((M_{ij})_{i,j\in J}\) is generated by \((a_i)_{i\in J}\).
Since \(K\subsetneq J\), the principal submatrix \((M_{kt})_{k,t\in K}\) is negative definite. Now let us focus on showing the claim. We argue by contradiction. Otherwise, there exist nonempty sets $J_1$ and $J_2$ such that $J=J_1\sqcup J_2$ and $M_{j_1j_2}=0$ for any $j_1\in J_1,j_2\in J_2$. Let $B=\bigcup\limits_{j_1\in J_1,j_2\in J_2}f(F_i\cap F_i)$. $\dim B\leq m-d-2$ implies $B\subsetneq E$. Fix $z\in E\setminus B$, $\left(F_{j_1}\cap X_z\right)\bigcap\left(F_{j_2}\cap X_z\right)= \emptyset$ for any $j_1\in J_1,j_2\in J_2$, where $X_z=f^{-1}(z)$. Then
    $$X_z= \left(\bigcup\limits_{j_1\in J_1}F_{j_1}\cap X_z\right)\bigsqcup \left(\bigcup_{j_2\in J_2}F_{j_2}\cap X_z\right),$$
    which contradicts the assumption that $X_z$ is connected. The proof of (1) is complete.

    Now let us show (2). We may assume that $f_*([\w_X]^d)=1$. Let $$\beta'=[\w_X]^{d}\cdot f^*\beta+\sum\limits_{k\in K}c_k[F_k]\cdot [\w_X]^{d-1}\cdot[f^*\w_Z]^{m-d-1},$$ where $c_k\in \R$ are to be determined. One obviously has: $f_*\beta'=\beta$ and $\beta'\cdot [F_s]=0$ for any $s\in S\setminus K$. Building on the statement (1), $(M_{kt})_{k,t\in K}$ is invertible and every coefficient of its inverse matrix must be positive. Then, we obtain the existence and non-negativity of the solutions $c_k,k\in K$, of the equations:
    $$-(\sum\limits c_k[F_k]\cdot[\w_X]^{d-1}\cdot[f^*\w_Z]^{m-d-1})\cdot[F_t]=[\w_X]^d\cdot[f^*\beta]\cdot[F_t], \forall t\in K.$$
   One can see that $\beta'\cdot[F_k]=0$ for all $k\in K$ by the construction, and $F_s, s\in S\setminus K$ is $f$-exceptional divisor implies $\beta'\cdot[F_s]=0$. It remains to check that $\beta'$ is movable. For any pseudo-effective class $\alpha$, there exists a unique divisorial Zariski decomposition $$\alpha=P(\alpha)+[N(\alpha)]$$ such that $P(\alpha)$ is modified nef and $N(\alpha)=\sum\limits_{\text{prime divisor }D}v(\alpha,D)D$ (cf. \cite{Boucksom04}). For any effective prime divisor \(D\), we have \(\beta'\cdot D\ge 0\).
Indeed, if \(D=F_s\) for some \(s\in S\), this follows from the construction above.
If \(D\) is different from all \(F_k\), then every correction term has
non-negative intersection with \(D\), while
\([\omega_X]^d\cdot f^*\beta\cdot D\ge0\).
Thus \(\beta'\cdot [N(\alpha)]\ge0\).   For any $\epsilon>0$, we can take a positive current $T$ in $P(\alpha)$ with analytic singularities in codimension at least $2$ such that $T\geq -\epsilon\w_X$ (c.f. \cite[Theorem 2.1]{Boucksom04}). Then one can see that 
    $$[F_k]\cdot[\w_X]^{d-1}\cdot[f^*\w_Z]^{m-d-1}\cdot P(\alpha)\geq -\epsilon\int_{F_{k,\text{reg}}} \w_X^{d}\wedge f^*\w_Z^{m-d-1}$$
    for every $\epsilon>0$. We conclude that $$[F_k]\cdot[\w_X]^{d-1}\cdot[f^*\w_Z]^{m-d-1}\cdot\alpha=[F_k]\cdot[\w_X]^{d-1}\cdot[f^*\w_Z]^{m-d-1}\cdot P(\alpha)+[F_k]\cdot[\w_X]^{d-1}\cdot[f^*\w_Z]^{m-d-1}\cdot [N(\alpha)]\geq 0.$$  Note that $[\w_X]^d\cdot[f^*\beta]$ is movable, one has that $[\w_X]^d\cdot[f^*\beta]\cdot\alpha\geq 0$ and thus $\beta'\cdot\alpha\geq 0$. The proof is complete.
\end{proof}

\subsubsection{Proof of Theorem \ref{prop-characterization-rationalconnectedness}}
Building on Corollary \ref{coro-rationaldimension} and Proposition \ref{prop-existence-hermitian}, it remains to show that $\rd(X)\geq k$ implies $\mu_\alpha(\Lambda^{m-k+1}T^{1,0}X)>0$ for some movable class $\alpha$.

	 We will argue by induction. Suppose that (1) holds whenever $\dim X<m$. Since $X$ is uniruled, $K_X$ is not pseudo-effective by \cite[Theorem 1.1]{ou25} and thus $\alpha_0\cdot K_X<0$ for some movable class $\alpha_0$. Let $\mG\subset T^{1,0}X$ be the $\alpha_0$-maximal destablising subsheaf. It defines a rational fibration (by \cite[Corollary 1.5 and Corollary 6.7]{CP25}) $f:X\dashrightarrow Y$ with $\mG=\ker(df)$. Moreover, this fibration is non-trivial: $\dim Y<m$. Put $d:=\dim X-\dim Y=\rank (\mG)$.
	
	We consider bimeromorphic maps $\pi_X:\widehat{X}\rightarrow X,$ $\pi_Y:\widehat{Y}\rightarrow Y$ and a holomorphic map $\widehat{f}:\widehat{X}\rightarrow \widehat{Y}$ such that $f\circ\pi_X=\pi_Y\circ\widehat{f}$ and the following statements are satisfied.
	\begin{itemize}
		\item The discriminant locus of $\widehat{f}$ is a divisor $E$ with snc support
		\item If a component of $\widehat{f}^{-1}(E)$ is exceptional, then it is also $\pi_X$-exceptional.
	\end{itemize}
	\[
	\begin{tikzcd}
		\widehat{X} \arrow[r,"\pi_X"] \arrow[d,"\widehat{f}"]	& X \arrow[d,"f",dashrightarrow]  \\
		\widehat{Y} \arrow[r,"\pi_Y"]	& Y
	\end{tikzcd}
	\]
	$\mG=\ker(df)$ induces a foliation $\widehat{\mG}$ on $\widehat{X}$. i.e., $\widetilde{\mG}=\pi_X^*\mG\cap T^{1,0}\widehat{X}$ is saturated in $T^{1,0}\widehat{X}$, and it is known that $\det(\widehat{\mG}^\vee)=K_{\widehat{X}/\widehat{Y}}-D(\widehat{f})$ nodule $\pi_X$-exceptional divisor (c.f. \cite[Section 2]{Druel17}). Then
	$$0>-\alpha \cdot \det(\widehat{\mG}^\vee)=\alpha\cdot(K_{\widehat{X}/\widehat{Y}}-D(\widehat{f})),$$
	where $\alpha=\pi_X^*\alpha_0$, so $K_{\widehat{X}/\widehat{Y}}-D(\widehat{f})$ is not pseudo-effective. Since $\widehat{f}$ satisfies the hypothesis of \cite[Theorem 3.1]{CP25} (see e.g. \cite[Page 16]{CP25}), $K_{\widehat{X}_y}+D_{y}:=(K_{\widehat{X}/\widehat{Y}}-D(\widehat{f}))|_{\widehat{X}_y}$ is not pseudo-effective for generic $y\in\widehat{Y}$. Then there exists some $\alpha'\in\mov^0(\widehat{X}/\widehat{Y})$ such that $\alpha'\cdot K_{X/Z}<0.$ Let $\mF'$ be the $\alpha'$-maximal destablizing subsheaf. Following the argument of \cite[Page 36]{Campana16}, $\mF'\subset \ker(d\widehat{f})$ is an algebraic foliation. Assume that $d_{\alpha_0}:=d$ is minimal among all $d_{\alpha_0}$. Then $\mF'=\ker(df)$ and $\mu_{\alpha',\min}(\mF')>0$ by the construction.
    
    Consider the following short exact sequence
    $$0\rightarrow \mF'\rightarrow T^{1,0}\widehat{X}\rightarrow \mQ\rightarrow 0,$$
    which induces a filtration $\{\mE_p\}$ of $\Lambda^{m-k+1}T^{1,0}\widehat{X}$ by $\mF_p:=\im(\Lambda^{m-k+1-p}\mF'\otimes\Lambda^{p}\mQ\rightarrow \Lambda^{m-k+1}T^{1,0}\widehat{X})$ such that
    $$(\mE_{p}/\mE_{p-1})^{\vee\vee}\cong (\Lambda^{m-k+1-p}\mF'\otimes\Lambda^{p}\mQ)^{\vee\vee}.$$ It follows from Proposition \ref{prop-slope-computation} that for any big movable class $\alpha$,
    $$\mu_{\alpha,\min}((\Lambda^{m-k+1-p}\mF'\otimes\Lambda^{p}\mQ)^{\vee\vee})\geq (m-k+1-p)\mu_{\alpha,\min}(\mF')+\mu_{\alpha,\min}((\Lambda^p\mQ)^{\vee\vee}).$$
    It suffices to find some movable class $\alpha\in\mov(X)$ such that \begin{equation}\label{equa-desiredproperty}
        (m-k+1-p)\mu_{\alpha,\min}(\mF')+p\mu_{\alpha,\min}(\mQ^{\vee\vee})>0,\ \forall 0\leq p\leq  m-d.
    \end{equation}
    
    Since $\mQ$ is a subsheaf of $\widehat{f}^*T^{1,0}\widehat{Z}$ and $\widehat{f}^*T^{1,0}\widehat{Z}/\mQ$ is supported in a divisor $F$ such that $\widehat{f}(F)\subsetneq W$. Then for generic $z\in Z$ and any quotient sheaf $\mH''$ of $\mQ$, there exists some quotient sheaf $\mH'$ of $\widehat{f}^*T^{1,0}\widehat{Z}$ such that $\mH''\subset\mH'$ and $\mH'|_{\widehat{f}^{-1}(z)}=\mH'_{\widehat{f}^{-1}(z)}$, which implies that $\alpha'\cdot\det(\mH')=\alpha'\cdot\det(\mH'')$ since $\alpha'\in \mov^0(\widehat{X}/\widehat{Z})$ and so $\mu_{\alpha',\min}(\mQ)\geq \mu_{\alpha',\min}(\widehat{f}^*T^{1,0})(\widehat{Z})$. For any quotient sheaf $\mH'$ of $(\widehat{f}^* T^{1,0}\widehat{Z})$, Lemma 5.3 in \cite{Campana16} implies that either $\det(\mH')\geq \widehat{f}^*\det (\mH)$ for some quotient sheaf $\mH$ of $T^{1,0}\widehat{Z}$, or $\det(\mH')$ is a non-zero effective divisor when redtricted to the generic fiber of $\widehat{f}$. Thus, in the first case, $\mu_{\alpha',\min}(\widehat{f}^*T^{1,0}Z)\geq 0$ and in the second case $\mu_{\alpha_t}(\mH')\geq \mu_{\alpha,\min}(\mH)>0$. Together, we conclude that $\mu_{\alpha',\min}(\mQ)\geq 0$. When $k=d$, $\alpha'$ satisfies \eqref{equa-desiredproperty} by combining $\mu_{\alpha',\min}(\mF')>0$ and Proposition \ref{prop-slope-computation}.

    Now let us consider the case $k<d$. Since $\rd(\widehat{Y})=\rd(\widehat{X})-d\geq k-d$ and $\dim \widehat{Y}<m$, we can apply induction: there exists some movable class $\beta\in \mov^0(\widehat{Y})$ such that $\mu_{\beta,\min}(\Lambda^{m-k+d+1}(T^{1,0}\widehat{Y}))>0$.
     Building on Proposition \ref{prop-key}, there exists some $\beta'\in\mov(X)$ such that $\widehat{f}_*(\beta')=\beta$, and moreover: $\beta'\cdot[F_s]=0$, forall $s\in S$. Define $\alpha_t:=\alpha'+t\beta'$ for $t>0$, then building on the similar discussion as above, for any quotient sheaf $\mH''$ of $\Lambda^p(\mQ^{\vee\vee})$, either, $\mu_{\alpha_t}(\mH'')\geq t\mu_{\beta,\min}(\Lambda^pT^{1,0}\widehat{Z})>0$ or $\mu_{\alpha_t}(\mH'')>0$. Hence, one has
     $$\mu_{\alpha_t,\min}((\Lambda^{m-k+1}\mQ)^{\vee\vee})>0,$$
     and for any $0\leq p< m-k+1$,
     \begin{align*}
         &(m-k+1-p)\mu_{\alpha_t,\min}(\mF')+\mu_{\alpha_t,\min}((\Lambda^p\mQ)^{\vee\vee})\\
         \geq& (m-k+1-p)\mu_{\alpha',\min}(\mF')+t(m-k+1-p)\mu_{\beta',\min}(\mF')+p\max\{t\mu_{\beta,\min}(\mQ),0\}\\
         >&0
     \end{align*}
     when $t>0$ is sufficiently small. The proof is complete.

\end{document}